\newtheorem{theorem}{Theorem}[section]
\newtheorem{lemma}[theorem]{Lemma}
\newtheorem{proposition}[theorem]{Proposition}
\newtheorem{corollary}[theorem]{Corollary}
\theoremstyle{definition}
\newtheorem{definition}[theorem]{Definition}
\newtheorem{example}[theorem]{Example}
\newtheorem{assumption}[theorem]{Assumption}
\theoremstyle{remark}
\newtheorem{remark}[theorem]{Remark}
  \numberwithin{equation}{section}
\begin{document}
\setcounter{page}{1}

\title[Estimates for sums of eigenfunctions of elliptic operators]{Estimates for sums of eigenfunctions of elliptic pseudo-differential operators on  compact Lie groups}

\author[D. Cardona]{Duv\'an Cardona}
\address{
  Duv\'an Cardona:
  \endgraf
  Department of Mathematics: Analysis, Logic and Discrete Mathematics
  \endgraf
  Ghent University, Belgium
  \endgraf
  {\it E-mail address} {\rm duvanc306@gmail.com, duvan.cardonasanchez@ugent.be}
  }
  
  \author[J. Delgado]{Julio Delgado}
\address{
  Julio Delgado:
  \endgraf
  Departmento de Matem\'aticas
  \endgraf
  Universidad del Valle
  \endgraf
  Cali-Colombia
    \endgraf
    {\it E-mail address} {\rm delgado.julio@correounivalle.edu.co}
  }

\author[M. Ruzhansky]{Michael Ruzhansky}
\address{
  Michael Ruzhansky:
  \endgraf
  Department of Mathematics: Analysis, Logic and Discrete Mathematics
  \endgraf
  Ghent University, Belgium
  \endgraf
 and
  \endgraf
  School of Mathematical Sciences
  \endgraf
  Queen Mary University of London
  \endgraf
  United Kingdom
  \endgraf
  {\it E-mail address} {\rm michael.ruzhansky@ugent.be, m.ruzhansky@qmul.ac.uk}
  }

\thanks{The authors are supported  by the FWO  Odysseus  1  grant  G.0H94.18N:  Analysis  and  Partial Differential Equations and by the Methusalem programme of the Ghent University Special Research Fund (BOF)
(Grant number 01M01021). Julio Delgado is also supported by  Vic. Inv Universidad del Valle. Grant No. CI-7329,  MathAmSud and Minciencias-Colombia under the project MATHAMSUD 21-MATH-03. Michael Ruzhansky is also supported  by EPSRC grant 
EP/R003025/2.
}

     \keywords{Pseudo-differential operator, Null-controllability, Fractional diffusion model, Microlocal Analysis, Spectral Inequality}
     \subjclass[2020]{42B20, 42B37}

\begin{abstract} We extend the estimates proved by Donnelly and Fefferman and by Lebeau and Robbiano for sums of eigenfunctions of the Laplacian (on a compact manifold) to estimates for sums of eigenfunctions of any positive and elliptic pseudo-differential operator of positive order on a compact Lie group. Our criteria are imposed in terms of the positivity of the corresponding matrix-valued symbol of the operator. As an application of these inequalities in the control theory, we obtain the null-controllability for diffusion models for elliptic pseudo-differential operators on compact Lie groups.
\end{abstract} 
\maketitle
\tableofcontents
\allowdisplaybreaks
\section{Introduction}
\subsection{Outline}
Let $(M,g)$ be a compact $C^\infty$-Riemannian manifold.
In the late 1980 H. Donnelly and C. Fefferman in their celebrated {\it Inventiones' paper} \cite{DonnellyFefferman} proved the doubling property
\begin{equation}\label{Fefferman-Donnelly}
    \sup_{B(2R)}|\phi|\leq e^{C_1\lambda +C_2} \sup_{B(R)}|\phi|
\end{equation}for any eigenfunction of the Laplacian $\Delta_g$ on $M,$ that is, $-\Delta_g \phi=\lambda^2 \phi,$ where $B(2R)$ and $B(R)$ represent concentric balls (associated to the geodesic distance) where the constants $C_1$ and $C_2$ are independent of $R>0,$ and depending only on $M.$  The estimate in \eqref{Fefferman-Donnelly} remains valid for sums of eigenfunctions of $\Delta_g.$  In this work we extend such an estimate for sums of eigenfunctions of any positive elliptic pseudo-differential operator $A$ when $M$ is a compact Lie group.    Even, we consider the general case where $A$ has positive real order and belongs to the global $(\rho,\delta)$-H\"ormander classes. 

If this inequality holds in the complete range $0\leq \delta<\rho\leq 1$ was an open problem prior to this work. The fact of considering the setting of compact Lie groups is justified since on general compact manifolds the principal symbol of a pseudo-differential operator is invariantly defined only if $0\leq \delta<\rho\leq 1$ and $\rho\geq 1-\delta,$ see H\"ormander \cite{Hormander1985III}.  Here, we introduce a new approach, different from the one via Carleman estimates as developed by Donnelly and Fefferman in \cite{DonnellyFefferman}. A reason to  introduce a new approach comes from the lack of Carleman estimates in the case of non-local operators. To do this, looking for criteria on the operator $A$ that allow the validity of the doubling property in \eqref{Fefferman-Donnelly} for the sums of its eigenfunctions, we connect this problem  with the  representation theory of a compact Lie group $G.$ In order to present our main Theorem \ref{Main:theorem} let us introduce the required notation.
\subsection{Main result}
Indeed, by writing the elliptic operator $A:C^\infty(G)\rightarrow C^\infty(G)$ in the convolution form 
\begin{equation}
    Af(x)=\int\limits_{G}R_{A}(x,xy^{-1})f(y)dy,\,\,f\in C^\infty_0(G),
\end{equation}where the distribution $R_A\in C^\infty(G, \mathscr{D}'(G))$ is associated via the Schwartz kernel theorem, one can associate a global symbol $\sigma_A:G\times \widehat{G}\rightarrow \cup_{\ell\in \mathbb{N}}\mathbb{C}^{\ell\times \ell}$ to $A.$ Here, $\widehat{G}$ denotes the unitary dual of $G,$ formed by the set of all continuous, unitary and irreducible representations $\xi:G\rightarrow\textnormal{Hom}(\mathbb{C}^\ell)$ of $G.$ Recall that $d_\xi:=\ell$ is usually called the dimension of the representation $\xi$. Then, the global symbol $\sigma_A$ of $A$ is defined by the group Fourier transform of the distribution $K_{A}(x,\cdot),$ which is given by
\begin{equation}
    \sigma_A(x,\xi)=\int\limits_{G}R_{A}(x,z)\xi(y)^*dy,\,\,[\xi]\in \widehat{G}.
\end{equation}Then, the Fourier inversion formula allows the Fourier representation of the operator $A$ as follows
\begin{equation}
    Af(x)=\sum_{[\xi]\in \widehat{G}}d_\xi\textnormal{Tr}[\xi(x)\sigma_A(x,\xi)\widehat{f}(\xi)],\,f\in C^\infty(G),
\end{equation}with $\widehat{f}(\xi)=\int\limits_{G}f(y)\xi(y)^*dy$ denoting the Fourier transform of a test function $f$ at the representation $\xi.$ Above $dy$ denotes the Haar measure on $G.$ This quantisation was consistently developed in \cite{Ruz}, and we recall some of its relevant properties.

The H\"ormander classes of pseudo-differential operators $\Psi^m_{\rho,\delta}(G)$ can be characterised in terms of the global matrix-valued symbols $\sigma_A$ obtained in the construction above. That $A\in \Psi^m_{\rho,\delta}(G) $ means that in any local coordinate system the operator has the form (by identifying local coordinates systems  in $G$ with the corresponding Euclidean open subsets)
$$ A\phi(x) =\int\limits_{\mathbb{R}^n}e^{2\pi i x\cdot \theta}\sigma(x,\theta)\widehat{\phi}(\theta)d\theta,\,\,\phi\in C^\infty_0(\mathbb{R}^n),$$ where $\widehat{\phi}$ denotes the Euclidean Fourier transform of $\phi$ and where the symbol $\sigma$ associated to each chart satisfies growing estimates of $(\rho,\delta)$-type, that is
\begin{equation}
    |\partial_x^\beta\partial_\theta^\alpha \sigma(x,\theta)|\leq C_{\alpha,\beta}(1+|\theta|)^{m-\rho|\alpha|+\delta|\beta|}
\end{equation}uniformly on compact subsets of the  chart.
Indeed, there are required certain relations between $\rho$ and $\delta$, to have the classes $\Psi^m_{\rho,\delta}(G)$ invariant under changes of coordinates, namely that
\begin{equation}\label{rho:delta:restr}
    0\leq \delta<\rho\leq 1,\, \rho\geq 1-\delta.
\end{equation}
Under this assumptions it was proved in \cite{RuzhanskyTurunenWirth2014}, that $A\in \Psi^m_{\rho,\delta}(G)$ if and only if its matrix-valued symbol $\sigma_A$ satisfies the symbol estimates
\begin{equation}\label{RTHormanderclasses}
    \Vert \partial_x^\beta\mathbb{D}^\alpha\sigma_A(x,\xi)\Vert_{\textnormal{End}(\mathbb{C}^{d_\xi})}\leq C_{\alpha,\beta}\langle \xi\rangle^{m-\rho|\alpha|+\delta|\beta|},\, (x,\xi)\in G\times \widehat{G}.
\end{equation}The weight $\langle \xi \rangle:=(1+\lambda_{[\xi]})^\frac{1}{2}$ is defined in terms of the spectrum $\{\lambda_{[\xi]}\}_{[\xi]\in \widehat{G}}$ of the positive Laplacian $\mathcal{L}_G.$ Observe that the unitary dual of $G$ is a discrete set and the difference operators $\mathbb{D}^\alpha$ in \eqref{RTHormanderclasses} play the role of ``derivatives'' acting on functions/distributions defined on the unitary dual $\widehat{G}$.  

An important feature of the description above for the H\"ormander classes of pseudo-differential operators $\Psi^m_{\rho,\delta}(G),$ $m\in \mathbb{R},$ $0\leq \delta<\rho\leq 1,$ and $\rho\geq 1-\delta$ is that still, when $\rho<1-\delta,$ one can define the classes 
\begin{equation}\label{RT:xclasses}
    \Psi^m_{\rho,\delta}(G\times \widehat{G}):=\{A:C^\infty(G)\rightarrow C^\infty(G):\sigma_A \textnormal{ satisfies }\eqref{RTHormanderclasses}\}
\end{equation}allowing a well-defined class of pseudo-differential operators in the complete range $0\leq \delta<\rho\leq 1.$ The classes in  \eqref{RT:xclasses} become effective in handling certain classes of operators, for example resolvent operators
for vector fields on a compact Lie group $G$ which belong to the class $ \Psi^0_{0,0}(G\times \widehat{G})$ or in parametrices of H\"ormander sub-Laplacians that have symbols in the class  $ \Psi^{-1}_{\frac{1}{2},0}(G\times \widehat{G}).$
See  \cite{RuzhanskyTurunenWirth2014} for this and for other examples of the appearance of different symbol classes as
parametrices for hypoelliptic operators which cannot be handled by the standard theory
in view of the restriction in \eqref{rho:delta:restr}. 

The following Donnelly-Fefferman type inequality for elliptic pseudo-differential operators on a compact Lie group  is the main theorem of this work.

\begin{theorem}\label{Main:theorem} Let $0\leq \delta<\rho\leq 1.$ Let $A\in \Psi^m_{\rho,\delta}(G\times \widehat{G})$ be a positive elliptic pseudo-differential operator of order $m>0.$ Assume that $\sigma_A(x,\xi)\geq 0$ for all $(x,[\xi])\in G\times \widehat{G}.$  Let $(e_j,\lambda_j^m),$ $\lambda_j\geq 0,$ be the corresponding spectral data of $A,$ determined by the eigenvalue problem $Ae_j=\lambda_j^me_j$ with the eigenfunctions $e_j$ being $L^2$-normalised.   Then the following spectral estimates are valid:
\begin{itemize}
    \item For any non-empty open subset $\omega\subset G,$ we have
\begin{equation}\label{Spectral:Inequality:Intro}
    \Vert \varkappa\Vert_{L^2(G)}\leq C_1e^{C_2 {\lambda}}\Vert \varkappa\Vert_{L^2(\omega)},\,\,\,\varkappa\in \textnormal{span}\{e_j:\lambda_j\leq \lambda\},
\end{equation}with $C_1=C_1(\omega)$ and $C_2=C_2(\omega)$ depending on $\omega,$ but not on $\varkappa.$
\item For any $R>0$ let $B(x,R)$ be a ball defined by the geodesic distance, of radius $R>0$ and centred at $x.$ Then,
\begin{equation}\label{Donnelly-Fefferman}
    \sup_{B(x,2R)}|\varkappa|\leq  e^{C_1' {\lambda}+C_2'} \sup_{B(x,R)}|\varkappa|,\,\,\,\varkappa\in \textnormal{span}\{e_j:\lambda_j\leq \lambda\},
\end{equation}with $C_1'=C_{1}'(R)$ and $C_2'=C_2'(R)$  depending  only on the radius $R>0$ but  not on $\varkappa.$
\end{itemize}

\end{theorem}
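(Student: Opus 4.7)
My plan is to prove the spectral inequality \eqref{Spectral:Inequality:Intro} first and then derive the doubling estimate \eqref{Donnelly-Fefferman} from it as a corollary. For the spectral inequality I will transfer the classical Lebeau--Robbiano inequality for the Laplacian $\mathcal{L}_{G}$ to the operator $A$ by exploiting the global matrix-valued symbolic calculus of \cite{RuzhanskyTurunenWirth2014} together with the Peter--Weyl decomposition of $L^{2}(G)$; the positivity hypothesis $\sigma_{A}(x,\xi)\geq 0$ is what makes this transfer quantitative.

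The first step is a two-sided comparison between $A$ and $(I-\mathcal{L}_{G})^{m/2}$. Ellipticity of $A$ in the class $\Psi^{m}_{\rho,\delta}(G\times \widehat{G})$ gives the Sobolev upper bound $\|Af\|_{L^{2}}\lesssim \|(I-\mathcal{L}_{G})^{m/2}f\|_{L^{2}}$, while the matrix-valued positivity $\sigma_{A}(x,\xi)\geq 0$ together with ellipticity yields a G\aa rding-type lower bound of the form $(Af,f)\geq c\|(I-\mathcal{L}_{G})^{m/4}f\|_{L^{2}}^{2}-C\|f\|_{L^{2}}^{2}$. Applying the same analysis iteratively to $A^{k}$ (which lies in $\Psi^{mk}_{\rho,\delta}(G\times\widehat{G})$) and using $A^{k}e_{j}=\lambda_{j}^{mk}e_{j}$, one obtains the uniform Sobolev bound
\begin{equation*}
\|\varkappa\|_{H^{s}(G)}\leq C_{s}\,\lambda^{s}\,\|\varkappa\|_{L^{2}(G)},\qquad \varkappa\in\textnormal{span}\{e_{j}:\lambda_{j}\leq\lambda\},
\end{equation*}
for every $s\geq 0$, which quantifies the concentration of $\varkappa$ at Laplacian frequencies of size $\lesssim\lambda$.

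For the second step, let $\Pi^{\mathcal{L}}_{\Lambda}$ be the Peter--Weyl spectral projector of $\sqrt{-\mathcal{L}_{G}}$ onto representations with $\langle\xi\rangle\leq \Lambda$. The previous bound gives the decaying tail $\|(I-\Pi^{\mathcal{L}}_{K\lambda})\varkappa\|_{L^{2}}\leq C_{s}K^{-s}\|\varkappa\|_{L^{2}}$ for every $s\geq 0$. The classical Lebeau--Robbiano spectral inequality for $\sqrt{-\mathcal{L}_{G}}$ on the compact Riemannian manifold $G$ then gives $\|\Pi^{\mathcal{L}}_{K\lambda}\varkappa\|_{L^{2}(G)}\leq \widetilde{C}_{1}e^{\widetilde{C}_{2}K\lambda}\|\Pi^{\mathcal{L}}_{K\lambda}\varkappa\|_{L^{2}(\omega)}$; decomposing $\varkappa=\Pi^{\mathcal{L}}_{K\lambda}\varkappa+(I-\Pi^{\mathcal{L}}_{K\lambda})\varkappa$ on both sides, and taking $K$ (hence $s$) large enough to absorb the exponentially small tail into the left-hand side, yields \eqref{Spectral:Inequality:Intro}.

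I expect the hardest point to be the G\aa rding-type lower bound when $\rho<1-\delta$, since the classical Fefferman--Phong and sharp-G\aa rding machinery is not available on manifolds in that regime---this is exactly why the statement is restricted to compact Lie groups, where the global symbolic calculus of \cite{RuzhanskyTurunenWirth2014} remains valid and the pointwise matrix positivity $\sigma_{A}(x,\xi)\geq 0$ can be exploited directly at the symbolic level. Once \eqref{Spectral:Inequality:Intro} is in hand, the doubling property \eqref{Donnelly-Fefferman} follows by choosing $\omega=B(x,R)$: the Sobolev embedding $H^{s}\hookrightarrow L^{\infty}$ for $s>n/2$ together with the first-step bound $\|\varkappa\|_{H^{s}(G)}\lesssim \lambda^{s}\|\varkappa\|_{L^{2}(G)}$ gives $\sup_{B(x,2R)}|\varkappa|\lesssim \lambda^{s}\|\varkappa\|_{L^{2}(G)}$, which by \eqref{Spectral:Inequality:Intro} is in turn bounded by $C_{1}(R)\lambda^{s}e^{C_{2}(R)\lambda}\|\varkappa\|_{L^{2}(B(x,R))}\leq C_{1}'(R)e^{C_{2}'(R)\lambda}\sup_{B(x,R)}|\varkappa|$ after absorbing the polynomial factor into the exponential.
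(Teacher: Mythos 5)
Your overall strategy---reduce to the Lebeau--Robbiano inequality for $\mathcal{L}_G$ by projecting onto low Laplacian frequencies and treating the high-frequency part as an error---is genuinely different from the paper's, which adapts the Jerison--Lebeau scheme: one forms $F(x,t)=\sum_{\lambda_j\le\lambda}\frac{\sinh(\lambda_j t)}{\lambda_j}a_j e_j(x)$, observes the exact cancellation $(-\partial_t^2+A^{2/m})F=0$, embeds $G\times[0,T]$ in the compact Lie group $G\times\mathbb{T}(T,\varepsilon)$, and shows via the global pseudo-differential calculus and the Calder\'on--Vaillancourt theorem that $(-\partial_t^2+\mathcal{L}_G)(-\partial_t^2+A^{2/m})^{-1}$ is $L^2$-bounded, so that the known interpolation inequality for $-\partial_t^2+\mathcal{L}_G$ transfers to $-\partial_t^2+A^{2/m}$. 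The positivity $\sigma_A\ge 0$ enters the paper precisely to guarantee parameter-ellipticity of $A$ with respect to the left half-plane sector and hence invertibility of $-\partial_t^2+A^{2/m}$ with $\varepsilon$-uniform control.

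However, your route has a concrete gap in the absorption step. From $\|\varkappa\|_{H^s}\le C_s\lambda^s\|\varkappa\|_{L^2}$ you only obtain $\|(I-\Pi^{\mathcal{L}}_{K\lambda})\varkappa\|_{L^2}\le C_s K^{-s}\|\varkappa\|_{L^2}$, which for fixed $s$ and $K$ is a fixed \emph{constant} (not exponentially small in $\lambda$). After inserting the Laplacian Lebeau--Robbiano bound you arrive at
\[
\|\varkappa\|_{L^2(G)}\le \widetilde{C}_1 e^{\widetilde{C}_2 K\lambda}\|\varkappa\|_{L^2(\omega)} + \bigl(\widetilde{C}_1 e^{\widetilde{C}_2 K\lambda}+1\bigr)C_s K^{-s}\|\varkappa\|_{L^2(G)},
\]
and the error coefficient $\widetilde{C}_1 e^{\widetilde{C}_2 K\lambda}C_sK^{-s}$ diverges as $\lambda\to\infty$ for any fixed $s,K$, so it cannot be absorbed. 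To repair this one would need to let $s=s(\lambda)$ grow at least linearly in $\lambda$, which forces you to control the growth of $C_s$; but $C_s$ comes from elliptic regularity for $A^k$ with $k\sim s/m$, and for a generic $C^\infty$ (non-analytic) symbol in $\Psi^m_{\rho,\delta}(G\times\widehat{G})$ there is no sub-exponential control of $C_s$ in $s$. In short, eigenfunctions of a general elliptic $A$ do not concentrate on Laplacian frequencies $\lesssim\lambda$ with an \emph{exponentially} small tail, and it is exactly this that the paper's cancellation trick sidesteps. The derivation of the doubling estimate from the spectral inequality via Sobolev embedding is the same as in the paper and is fine.
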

\begin{remark}
In Subsection \ref{Applications:contro:theory} we give an application of this result to the control theory. More precisely, we use Theorem \ref{Main:theorem} to prove that the  heat equation
\begin{equation}\label{Heat:intro}
\begin{cases}u_t(x,t)+ A^\gamma u(x,t)=g(x,t)\cdot 1_\omega (x) ,& (x,t)\in G\times (0,T),
\\u(0,x)=u_0,\end{cases}
\end{equation}
associated to the fractional diffusion operator $A^{\gamma}$ is null-controllable at any time $T>0$ provided that $\gamma>1/m.$ The condition  $\gamma>1/m$ is sharp if one considers the case of the powers $A=\mathcal{L}_{\mathbb{T}}^{m/2}$ on the torus $G=\mathbb{T},$  see Miller \cite{Miller2006}.  For the terminology and for the basic aspects related to the control theory we refer the reader to Subsection \ref{Control:basics} and for the null-controllability result for the model \eqref{Heat:intro} see Theorem \ref{Main:theorem:statement} of Subsection \ref{Applications:contro:theory}. 
\end{remark}

\begin{remark}The analysis of growth estimates for eigenfunctions of the Laplacian and of other elliptic differential operators is still a problem of wide interest. In particular for its relation with the geometric analysis of nodal sets. For classic references on the subject we refer the reader to  Sunada \cite{Sunada}, Atiyah, Donnelly and Singer \cite{ADS82},  Borel and Garland \cite{BG83}, Jerison and Lebeau \cite{JerisonLabeau},  Donnelly and Fefferman \cite{DonnellyFefferman83,DonnellyFefferman,DonnellyFefferman1990,DonnellyFefferman1992}, Donnelly and Garofalo \cite{DonnellyGarofalo1992,DonnellyGarofalo1997}, and  Lin \cite{Lin1991}. As for recent works on the subject we refer the reader to  Apraiz, Escauriaza,  Wang,  and  Zhang \cite{Apraiz}, Blair and Sogge \cite{BlairSogge}, Cavalletti and  Farinelli \cite{CavalletiFarineli}, Enciso and Peralta-Salas \cite{EncisoPeralta}, Georgiev \cite{Georgiev},  Kenig, Zhu, and  Zhuge \cite{KZZ2022}, Logunov \cite{Logunov2018,Logunov20182},  Logunov, Malinnikova,  Nadirashvili, and  Nazarov \cite{Logunov2021},  Tian and  Yang \cite{TY2022} and Toth and Zelditch \cite{TothZelditch2021} just to mention a few. About the applications of spectral inequalities to the control theory we refer to  Benabdallah and  Naso \cite{BenabdallahNaso2002}, Fu,  L\"u, and  Zhang  \cite{FuLuZhang2020}, Lebeau and Robbiano \cite{LabeauRobbiano1995}, Lebeau and Zuazua \cite{LebeauLebeau1998}, J.-L. Lions, \cite{JLLions}, Micu and Zuazua \cite{MicuZuazua2006}, Miller \cite{Miller2006,Miller2007}, Rousseau and Lebeau \cite{RousseauLebeau2012}, Cardona \cite{Cardona2022},  Rousseau and Robbiano \cite{RousseauRobbiano2020},   and the extensive list of references therein.
\end{remark}

\subsection{Structure of the work}
In Section \ref{Preliminaries} we survey the rather extensive analytical
backgrounds about the theory of pseudo-differential operators on compact Lie groups with the calculus based on the matrix-valued quantisation and on compact manifolds (with the notion of a symbol in the $(\rho,\delta)$-class defined by local coordinate systems) in Section \ref{Preliminaries}. We do a particular  emphasis that these two points of view agree when $\rho\geq 1-\delta$ and when $0\leq \delta<\rho\leq 1.$ We then  use the global  theory of pseudo-differential operators and the matrix-valued quantisation to prove in Section \ref{SectionofProofs} the Donnelly-Fefferman/Lebeau-Robbiano spectral inequalities in Theorem  \ref{Main:theorem}. Finally, our application to the control theory of diffusion problems on compact Lie groups is addressed in Theorem \ref{Main:theorem:statement} of Subsection \ref{Applications:contro:theory}. 

\section{Preliminaries}\label{Preliminaries}
In this section, we present the preliminaries about the theory of pseudo-differential operators on compact Lie groups as well as the matrix-valued quantisation. For our further applications, we recall some results  about the control theory of heat equations on Hilbert spaces. The following standard notation will be employed during this work.
\begin{itemize}
    \item For two Hilbert spaces  $\mathcal{H}_1$ and $\mathcal{H}_2,$ we denote by $\mathscr{B}(\mathcal{H}_1,\mathcal{H}_2)$ the family of bounded and linear operators $T:\mathcal{H}_1\rightarrow \mathcal{H}_2.$
   
    \item The spectrum of a densely defined linear operator $A:\textnormal{Dom}(A)\subset{\mathcal{H}_1}\rightarrow \mathcal{H}_1$ will be denoted by $\sigma(A)$ and its resolvent set by $\textnormal{Resolv}(A):=\mathbb{C}\setminus \sigma(A). $ 
    \item We write $A\lesssim B$ if $A\leq cB$ where $c>0$ does not depend on $A$ and $B$. If $A\lesssim B$ and $B\lesssim A$ we write $A\asymp B.$
    \item $G$ is a compact Lie group and $\mathcal{L}_G$ denotes its corresponding Laplace-Beltrami operator.
\end{itemize}
\subsection{Pseudo-differential operators on compact Lie groups}

To define pseudo-differential operators, the main tool is the Fourier transform. On compact Lie groups the Fourier transform is defined in terms of the representations of a group. Only irreducible and unitary representations are needed to have the Fourier inversion formula. We define these objects as follows.
\subsubsection{The Fourier analysis of a compact Lie group}  
 The $L^p$-spaces $L^p(G)=L^p(G,dx)$ will be  associated with the Haar measure $dx.$ The Hilbert space $L^2(G)$ will be endowed with
   the inner product $(f,g)=\int\limits_{G}f(x)\overline{g(x)}dx.$  We will see that the spectral decomposition of $L^2(G)$ can be done in terms of the entries of unitary representations on a compact Lie group $G$.

A continuous and unitary representation of  $G$ on $\mathbb{C}^{\ell}$ is any continuous mapping $\xi\in\textnormal{Hom}(G,\textnormal{U}(\ell)) ,$ where $\textnormal{U}(\ell)$ is the Lie group of unitary matrices of order $\ell\times \ell.$ The integer number $\ell=\dim_{\xi}$ is called the dimension of the representation $\xi$ since it is the dimension of the representation space $\mathbb{C}^{\ell}.$

A subspace $W\subset \mathbb{C}^{d_\xi}$ is called $\xi$-invariant if for any $x\in G,$ $\xi(x)(W)\subset W,$ where $\xi(x)(W):=\{\xi(x)v:v\in W\}.$ The representation $\xi$ is irreducible if its only invariant subspaces are $W=\emptyset$ and $W=\mathbb{C}^{d_\xi},$ the trivial ones. On the other hand, 
any unitary representation $\xi$ is a direct sum of unitary irreducible representations. We denote it by $\xi=\xi_1\otimes \cdots\otimes \xi_j,$ with $\xi_i$ being irreducible representations on factors $\mathbb{C}^{d_{\xi_i}}$ that decompose the representation space $\mathbb{C}^{d_{\xi}}=\mathbb{C}^{d_{\xi_1}}\otimes \cdots \otimes \mathbb{C}^{d_{\xi_j}} .$

Two unitary representations $\xi\in \textnormal{Hom}(G,\textnormal{U}(d_\xi)) $ and $\eta\in \textnormal{Hom}(G,\textnormal{U}(d_\eta))$ are equivalent if there exists a linear mapping $F:\mathbb{C}^{d_\xi}\rightarrow \mathbb{C}^{d_\eta}$ such that for any $x\in G,$ $F\xi(x)=\eta(x)F.$ The mapping $F$ is called an intertwining operator between $\xi$ and $\eta.$ The set of all the intertwining operators between $\xi$ and $\eta$ is denoted by $\textnormal{Hom}(\xi,\eta).$ In view of the 1905's Schur lemma, if $\xi\in \textnormal{Hom}(G,\textnormal{U}(d_\xi)) $ is irreducible, then $\textnormal{Hom}(\xi,\xi)=\mathbb{C}I_{d_\xi}$ is formed by scalar multiples of the identity matrix  $I_{d_\xi}$ of order $d_\xi.$

The relation $\sim$ on the set of unitary representations $\textnormal{Rep}(G)$ defined by: {\it $\xi\sim \eta$ if and only if $\xi$ and $\eta$ are equivalent representations,} is an equivalence relation. The quotient 
$$
    \widehat{G}:={\textnormal{Rep}(G)}/{\sim}
$$is called the unitary dual of $G.$ It encodes all the Fourier analysis on the group. Indeed, if $\xi\in \textnormal{Rep}(G),$ the Fourier transform $\mathscr{F}_{G}$ associates to any $f\in C^\infty(G)$ a matrix-valued function $\mathscr{F}_{G}f$ defined on $\textnormal{Rep}(G)$ as follows
$$ (\mathscr{F}_{G}f)(\xi) \equiv   \widehat{f}(\xi)=\int\limits_Gf(x)\xi(x)^{*}dx,\,\,\xi\in \textnormal{Rep}(G). $$ The discrete Schwartz space $\mathscr{S}(\widehat{G}):=\mathscr{F}_{G}(C^\infty(G))$ is the image of the Fourier transform on the class of smooth functions. This operator admits a unitary extension from $L^2(G)$ into $\ell^2(\widehat{G}),$ with 
\begin{equation*}
 \ell^2(\widehat{G})=\{\phi:\forall [\xi]\in \widehat{G},\,\phi(\xi)\in \mathbb{C}^{d_\xi\times d_\xi}\textnormal{ and }\Vert \phi\Vert_{\ell^2(\widehat{G})}:=\left(\sum_{[\xi]\in \widehat{G}}d_{\xi}\Vert\phi(\xi)\Vert_{\textnormal{HS}}^2\right)^{\frac{1}{2}}<\infty \}.   
\end{equation*} The norm $\Vert\phi(\xi)\Vert_{\textnormal{HS}}$ is the standard Hilbert-Schmidt norm of matrices. The Fourier inversion formula takes the form
\begin{equation}
    f(x)=\sum_{[\xi]\in \widehat{G}}d_{\xi}\textnormal{Tr}[\xi(x)\widehat{f}(\xi)],\,f\in L^1(G),
\end{equation}where the summation is understood in the sense that from any equivalence class $[\xi]$ we choose one (any) a unitary representation. The sum is independent of such choice.  

\subsubsection{The quantisation formula} Let  $A:C^\infty(G)\rightarrow C^\infty(G)$ be a continuous linear operator with respect to the standard Fr\'echet structure on $C^\infty(G).$ The Schwartz kernel theorem associates to $A$ a kernel $K_A\in (C^\infty(G),\mathscr{D}'(G))$ such that
$$   Af(x)=\int\limits_{G}K_{A}(x,y)f(y)dy,\,\,f\in C^\infty(G).$$ The distribution defined via $R_{A}(x,xy^{-1}):=K_A(x,y)$ that provides the convolution identity
$$   Af(x)=\int\limits_{G}R_{A}(x,xy^{-1})f(y)dy,\,\,f\in C^\infty(G),$$
is called the right-convolution kernel of $A.$ By the Schwartz kernel theorem, 
  one can associate a global symbol $\sigma_A:G\times \widehat{G}\rightarrow \cup_{\ell\in \mathbb{N}}\mathbb{C}^{\ell\times \ell}$ to $A.$ Indeed, in view of the identity $Af(x)=(f\ast R_{A}(x,\cdot))(x),$ and after taking the Fourier transform with respect to $x\in G,$ we get 
 $$ \widehat{Af}(\xi)= \widehat{R}_{A}(x,\xi)\widehat{f}(\xi). $$ Then, the Fourier inversion formula gives the following representation of the operator $A$ in terms of the Fourier transform,
 \begin{equation}\label{Quantisation:formula}
     Af(x)=\sum_{[\xi]\in \widehat{G}}d_\xi\textnormal{Tr}[\xi(x)\widehat{R}_{A}(x,\xi)\widehat{f}(\xi)],\,f\in C^\infty(G).
 \end{equation} In view of the identity \eqref{Quantisation:formula}, from any equivalence class $[\xi]\in \widehat{G},$ we can choose one and only one irreducible unitary representation $\xi_0\in [\xi],$ such that the matrix-valued function
 \begin{equation}
    \sigma_{A}(x,[\xi])\equiv \sigma_A(x,\xi_0):=\widehat{R}_{A}(x,\xi_0),\,(x,[\xi])\in G\times\widehat{G},
 \end{equation}such that
\begin{equation}\label{Quantisation:formula2}
     Af(x)=\sum_{[\xi]\in \widehat{G}}d_\xi\textnormal{Tr}[\xi_0(x)\sigma_{A}(x,[\xi])\widehat{f}(\xi_0)],\,f\in C^\infty(G).
 \end{equation}The representation in \eqref{Quantisation:formula2} is independent of the choice of the representation $\xi_0$ from any equivalent class $[\xi]\in \widehat{G}.$ This is a consequence of the Fourier inversion formula.  In the following quantisation theorem we observe that the distribution $\sigma_A$ in \eqref{Quantisation:formula2} is unique and can be written in terms of the operator $A,$ see Theorems  10.4.4 and 10.4.6 of \cite[Pages 552-553]{Ruz}.
\begin{theorem}\label{The:quantisation:thm}
    Let $A:C^\infty(G)\rightarrow C^\infty(G) $ be a continuous linear operator. The following statements are equivalent.
    \begin{itemize}
        \item The distribution $\sigma_A(x,[\xi]):G\times \widehat{G}\rightarrow \cup_{\ell\in \mathbb{N}}\mathbb{C}^{\ell\times \ell}$ satisfies the quantisation formula
        \begin{equation}\label{Quantisation:3}
            \forall f\in C^\infty(G),\,\forall x\in G,\,\, Af(x)=\sum_{[\xi]\in \widehat{G}}d_\xi\textnormal{Tr}[\xi(x)\sigma_{A}(x,[\xi])\widehat{f}(\xi)].
        \end{equation}
        \item $ 
            \forall (x,[\xi])\in G\times \widehat{G},\, \sigma_{A}(x,\xi)=\widehat{R}_A(x,\xi).
        $ \\
        \item $ 
          \forall (x,[\xi]),\, \sigma_A(x,\xi)=\xi(x)^{*}A\xi(x),$ where $ A\xi(x):=(A\xi_{ij}(x))_{i,j=1}^{d_\xi}.  
        $ 
    \end{itemize}
\end{theorem}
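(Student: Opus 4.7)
The plan is to establish the equivalence by a cycle of three implications $(2)\Rightarrow(1)\Rightarrow(3)\Rightarrow(2)$, leaning on the Schwartz kernel theorem, the group Fourier inversion formula, and Schur orthogonality of matrix coefficients (Peter--Weyl). Nothing deeper than these tools is required; the only real work is in carefully tracking how $\xi(x)$, $\xi(y)^{*}$ and the matrix units interact once a single matrix coefficient is plugged into the quantisation formula.

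For $(2)\Rightarrow(1)$ I would simply make rigorous the heuristic already sketched in the text preceding the theorem. Starting from the Schwartz kernel representation $Af(x)=\int_G R_A(x,xy^{-1})f(y)\,dy=\bigl(f\ast R_A(x,\cdot)\bigr)(x)$, freeze $x$ and take the group Fourier transform in the convolution variable: the convolution identity and the homomorphism property $\xi(ab)=\xi(a)\xi(b)$ yield $\widehat{(f\ast R_A(x,\cdot))}(\xi)=\widehat{R}_A(x,\xi)\widehat{f}(\xi)$ at each $[\xi]\in\widehat{G}$. Plugging this into the Fourier inversion formula $g(x)=\sum_{[\xi]}d_\xi\operatorname{Tr}[\xi(x)\widehat{g}(\xi)]$ applied to $g=Af$ (first pointwise for $f\in C^\infty(G)$, then noting the sum converges in $C^\infty(G)$ because $R_A(x,\cdot)$ is a distribution of finite order) produces exactly formula \eqref{Quantisation:3} with $\sigma_A(x,\xi)=\widehat{R}_A(x,\xi)$.

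For $(1)\Rightarrow(3)$ I would exploit the fact that \eqref{Quantisation:3} holds for \emph{every} $f\in C^\infty(G)$, specialising it to matrix coefficients. Fix $[\xi]\in\widehat{G}$ and a representative $\xi$, and apply \eqref{Quantisation:3} to $f=\xi_{ij}$. Schur orthogonality gives $\widehat{\xi_{ij}}(\eta)=d_\xi^{-1}\delta_{[\eta]=[\xi]}E_{ji}$, where $E_{ji}$ is the standard matrix unit with a $1$ in position $(j,i)$. Substituting and collapsing the trace through $\operatorname{Tr}[M E_{ji}]=M_{ij}$ yields
\begin{equation*}
A\xi_{ij}(x)=\bigl(\xi(x)\sigma_A(x,\xi)\bigr)_{ij},\qquad 1\le i,j\le d_\xi,
\end{equation*}
i.e.\ the matrix identity $A\xi(x)=\xi(x)\sigma_A(x,\xi)$ with $A$ applied entrywise. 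Multiplying on the left by $\xi(x)^{*}$ and using unitarity $\xi(x)^{*}\xi(x)=I_{d_\xi}$ gives $\sigma_A(x,\xi)=\xi(x)^{*}A\xi(x)$, which is (3).

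Finally, for $(3)\Rightarrow(2)$ I would take the expression $\xi(x)^{*}A\xi(x)$ and unwind it using the Schwartz kernel, producing the Fourier transform of $R_A(x,\cdot)$. Applied entrywise, $A\xi_{ij}(x)=\int_G R_A(x,xy^{-1})\xi_{ij}(y)\,dy$; changing variable $z=xy^{-1}$ (the Haar measure is bi-invariant since $G$ is compact and hence unimodular) and expanding $\xi(z^{-1}x)=\xi(z)^{*}\xi(x)$ through the homomorphism and unitarity properties reduces the integral to a sum of products of an entry of $\int_G R_A(x,z)\xi(z)^{*}\,dz=\widehat{R}_A(x,\xi)$ with an entry of $\xi(x)$; comparing with the matrix identity $A\xi(x)=\xi(x)\sigma_A(x,\xi)$ obtained from (3) forces $\sigma_A(x,\xi)=\widehat{R}_A(x,\xi)$. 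The main obstacle is the bookkeeping in this last step: one must be careful with the convolution convention inherent in writing $R_A(x,xy^{-1})$ in order to match the sides of $A\xi(x)=\xi(x)\sigma_A(x,\xi)$ with the correct order of matrix multiplication; once the indices are aligned, independence of the identification from the choice of representative $\xi\in[\xi]$ is automatic by Schur's lemma, since any intertwiner between equivalent unitary representations is a scalar multiple of a unitary, preserving both $\widehat{R}_A(x,\cdot)$ and $\xi(x)^{*}A\xi(x)$ up to conjugation.
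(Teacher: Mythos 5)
The paper does not actually prove Theorem~\ref{The:quantisation:thm} in this document --- it refers to Theorems 10.4.4 and 10.4.6 of \cite{Ruz} --- so there is no in-text argument of the authors' to set your route against. Your cyclic plan $(2)\Rightarrow(1)\Rightarrow(3)\Rightarrow(2)$ using the Schwartz kernel, Fourier inversion and Schur orthogonality is the standard and natural one, and the step $(1)\Rightarrow(3)$ (plug $f=\xi_{ij}$, use $\widehat{\xi_{ij}}(\xi)=d_\xi^{-1}E_{ji}$ and $\operatorname{Tr}[ME_{ji}]=M_{ij}$) is carried out correctly.

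There is, however, a real gap in $(3)\Rightarrow(2)$, which you flag as ``bookkeeping'' but do not resolve, and with the ingredients as you have literally written them the step fails. The paper's formula $R_A(x,xy^{-1}):=K_A(x,y)$, i.e.\ $Af(x)=\int_G R_A(x,xy^{-1})f(y)\,dy$, is not the convention of \cite{Ruz} (Def.~10.4.3 there has $K_A(x,y)=R_A(x,y^{-1}x)$), and it is also internally inconsistent with the paper's own line $Af(x)=(f\ast R_A(x,\cdot))(x)$, since $(f\ast g)(x)=\int_G f(y)\,g(y^{-1}x)\,dy$ produces $R_A(x,y^{-1}x)$, not $R_A(x,xy^{-1})$. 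Following your substitution $z=xy^{-1}$ and expanding $\xi(z^{-1}x)=\xi(z)^*\xi(x)$ yields $A\xi(x)=\widehat{R}_A(x,\xi)\,\xi(x)$, with $\widehat{R}_A$ on the \emph{left} of $\xi(x)$; comparing with $A\xi(x)=\xi(x)\sigma_A(x,\xi)$ from (3) then only gives $\sigma_A(x,\xi)=\xi(x)^*\widehat{R}_A(x,\xi)\,\xi(x)$, which is not $\sigma_A=\widehat{R}_A$ unless $\widehat{R}_A(x,\xi)$ happens to commute with $\xi(x)$. The same mismatch quietly undermines your $(2)\Rightarrow(1)$, because the convolution identity $\widehat{(f\ast R_A(x,\cdot))}(\xi)=\widehat{R}_A(x,\xi)\widehat{f}(\xi)$ pertains to $(f\ast R_A(x,\cdot))(x)$, which is $\int_G R_A(x,y^{-1}x)f(y)\,dy$ and not $\int_G R_A(x,xy^{-1})f(y)\,dy$. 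The repair is to adopt the convention $K_A(x,y)=R_A(x,y^{-1}x)$ throughout, substitute $z=y^{-1}x$ (so $y=xz^{-1}$) and expand $\xi(xz^{-1})=\xi(x)\xi(z)^*$; this places $\widehat{R}_A$ on the right, $A\xi(x)=\xi(x)\widehat{R}_A(x,\xi)$, so all three items become mutually consistent. This is not a deferred bookkeeping triviality: with the kernel convention literally as quoted, the argument would conclude a false identity, so the choice of $z=y^{-1}x$ versus $z=xy^{-1}$ must be made correctly and once.
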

\begin{remark}
    In view of the quantisation formulae \eqref{Quantisation:formula2} and \eqref{Quantisation:3}, a symbol $\sigma_A$ can be considered as a mapping defined on $G\times \widehat{G}$ or as a mapping  defined on $$G\times \textnormal{Rep}(G)$$ by identifying all the values $\sigma_A(x,\xi)=\sigma_A(x,\xi')=\sigma(x,[\xi])$ when $\xi',\xi\in [\xi].$
\end{remark}
\begin{example}[The symbol of a Borel function of the Laplacian] Let $\mathbb{X}=\{X_1,\cdots,X_n\}$ be an orthonormal basis of the Lie algebra $\mathfrak{g}.$ The positive Laplacian on $G$ is the second order differential operator 
\begin{equation}
    \mathcal{L}_G=-\sum_{j=1}^nX_j^2.
\end{equation}The operator $ \mathcal{L}_G$ is independent of the choice of the orthonormal basis $\mathbb{X}$ of $\mathfrak{g}.$ The $L^2$-spectrum of $\mathcal{L}_G$ is a discrete set that can be enumerated in terms of the unitary dual $\widehat{G}$,
\begin{equation}
    \textnormal{Spect}(\mathcal{L}_G)=\{\lambda_{[\xi]}:[\xi]\in \widehat{G}\}.
\end{equation}For a Borel function $f:\mathbb{R}^+_0\rightarrow \mathbb{C},$ the right-convolution kernel $R_{f(\mathcal{L}_G)}$ of the operator $f(\mathcal{L}_G)$ (defined by the spectral calculus) is determined by the identity
\begin{equation}
    f(\mathcal{L}_G)\phi(x)=\phi\ast R_{f(\mathcal{L}_G)}(x),\,x\in G.
\end{equation}This kernel satisfies the identity
\begin{equation}
    \widehat{R}_{f(\mathcal{L}_G)}([\xi])=f(\lambda_{[\xi]})I_{d_\xi}.
\end{equation}Then the matrix-valued  symbol of $f(\mathcal{L}_G)$ can be determined e.g. using  Theorem \ref{The:quantisation:thm} as follows
\begin{equation}
    \sigma_{f(\mathcal{L}_G)}(x,\xi)=\widehat{R}_{f(\mathcal{L}_G)}([\xi]).
\end{equation}Since the operator $f(\mathcal{L}_G)$ is central the symbol  $\sigma_{f(\mathcal{L}_G)}(\xi)=\sigma_{f(\mathcal{L}_G)}(x,\xi)$ does not depend of the spatial variable $x\in G.$ Of particular interest for us will be the Japanese bracket function
\begin{equation}
    \langle t\rangle:=(1+|t|)^{\frac{1}{2}},\,t\in \mathbb{R}.
\end{equation}In particular the symbol of the operator $ \langle \mathcal{L}_G\rangle$ is given by
\begin{equation}\label{Japanne:bracket:G}
     \sigma_{\langle \mathcal{L}_G\rangle}([\xi]):= \langle \xi \rangle I_{d_\xi}, \,\,\,\langle \xi \rangle:=\langle \lambda_\xi \rangle.  
\end{equation}

\end{example}
\subsubsection{H\"ormander classes of pseudo-differential operators on compact Lie groups} In this section we denote for any linear mapping $T$ on $\mathbb{C}^\ell$ by $\Vert T\Vert_{\textnormal{op}}$ the standard operator norm
$$ \Vert T\Vert_{\textnormal{op}}= \Vert T\Vert_{\textnormal{End}(\mathbb{C}^\ell)}:=\sup_{v\neq 0}\|Tv\|_{e}/\|v\|_{e} , $$ where $\|\cdot\|_{e} $ is the Euclidean norm.

For introducing the H\"ormander classes on compact Lie groups we have to measure the growth of derivatives of symbols in the group variable, for this we use vector fields $X\in T(G).$ To derivate symbols with respect to the discrete variable $[\xi]\in \widehat{G}$ we use difference operators. Before introducing the H\"ormander classes on compact Lie groups we have to define these differential/difference operators. 

So, if $\{X_{1},\cdots, X_{j}\}$ is an arbitrary family of left-invariant vector fields, we will denote by
$$  X_{x}^{\alpha}:=X_{1,x}^{\alpha_1}\cdots X_{n,x}^{\alpha_n} $$
an arbitrary canonical differential operator of order $m=|\alpha|.$
Also, we have to take derivatives with respect to the ``discrete'' frequency variable $\xi\in \textnormal{Rep}(G).$ To do this, we will use the notion of difference operators. Indeed,  the frequency variable in the symbol $\sigma_A(x,[\xi])$ of a continuous and linear operator $A$ on $C^\infty(G)$ is discrete. This is since $\widehat{G}$ is a discrete space.

If $\xi_{1},\xi_2,\cdots, \xi_{k},$ are  fixed irreducible and unitary  representation of $G$, which do not necessarily belong to the same equivalence class, then each coefficient of the matrix
\begin{equation}
 \xi_{\ell}(g)-I_{d_{\xi_{\ell}}}=[\xi_{\ell}(g)_{ij}-\delta_{ij}]_{i,j=1}^{d_{\xi_\ell}},\, \quad g\in G, \,\,1\leq \ell\leq k,
\end{equation} 
that is each function 
$q^{\ell}_{ij}(g):=\xi_{\ell}(g)_{ij}-\delta_{ij}$, $ g\in G,$ defines a difference operator
\begin{equation}\label{Difference:op:rep}
    \mathbb{D}_{\xi_\ell,i,j}:=\mathscr{F}_G(\xi_{\ell}(g)_{ij}-\delta_{ij})\mathscr{F}^{-1}_G.
\end{equation}
We can fix $k\geq \mathrm{dim}(G)$ of these representations in such a way that the corresponding  family of difference operators is admissible, that is, 
\begin{equation*}
    \textnormal{rank}\{\nabla q^{\ell}_{i,j}(e):1\leqslant \ell\leqslant k \}=\textnormal{dim}(G).
\end{equation*}
To define higher order difference operators of this kind, let us fix a unitary irreducible representation $\xi_\ell$.
Since the representation is fixed we omit the index $\ell$ of the representations $\xi_\ell$ in the notation that will follow.
Then, for any given multi-index $\alpha\in \mathbb{N}_0^{d_{\xi_\ell}^2}$, with 
$|\alpha|=\sum_{i,j=1}^{d_{\xi_\ell}}\alpha_{i,j}$, we write
$$\mathbb{D}^{\alpha}:=\mathbb{D}_{1,1}^{\alpha_{11}}\cdots \mathbb{D}^{\alpha_{d_{\xi_\ell},d_{\xi_\ell}}}_{d_{\xi_\ell}d_{\xi_\ell}}
$$ 
for a difference operator of order $m=|\alpha|$. Now, we are ready for introducing the global H\"ormander classes on compact Lie groups.
\begin{definition}[Global $(\rho,\delta)$-H\"ormander classes in the whole range $0\leq \delta,\rho\leq 1$]
    We say that $\sigma \in {S}^{m}_{\rho,\delta}(G\times \widehat{G})$ if the following symbol inequalities 
\begin{equation}\label{HormanderSymbolMatrix}
   \Vert {X}^\beta_x \mathbb{D}^{\alpha} \sigma(x,\xi)\Vert_{\textnormal{op}}\leqslant C_{\alpha,\beta}
    \langle \xi \rangle^{m-\rho|\gamma|+\delta|\beta|},
\end{equation} are satisfied for all $\beta$ and  $\gamma $ multi-indices and for all $(x,[\xi])\in G\times \widehat{G},$ where $ \langle \xi \rangle$ denotes the Japanese bracket function at $\lambda_{\xi}$ defined in \eqref{Japanne:bracket:G}.
\end{definition}
The class $\Psi^m_{\rho,\delta}(G\times \widehat{G})\equiv\textnormal{Op}({S}^m_{\rho,\delta}(G\times \widehat{G}))$ is defined by those continuous and linear operators on $C^\infty(G)$ such that $\sigma_A\in {S}^m_{\rho,\delta}(G\times \widehat{G}).$ 

In the next theorem we describe some fundamental properties of the global H\"ormander  classes of pseudo-differential operators  (\cite{Ruz}).
\begin{theorem}\label{RTcalculus:Group} Let $\rho,\delta\in [0,1]$ be such that $0\leqslant \delta\leqslant \rho\leqslant 1,$ $\rho\neq 1.$  Then $\Psi^\infty_{\rho,\delta}(G):=\cup_{m\in \mathbb{R}} \Psi^m_{\rho,\delta}(G)$ is an algebra of operators stable under compositions and adjoints, that is:
\begin{itemize}
    \item [-] the mapping $A\mapsto A^{*}:\Psi^{m}_{\rho,\delta}(G)\rightarrow \Psi^{m}_{\rho,\delta}(G)$ is a continuous linear mapping between Fr\'echet spaces. 
\item [-] The mapping $(A_1,A_2)\mapsto A_1\circ A_2: \Psi^{m_1}_{\rho,\delta}(G)\times \Psi^{m_2}_{\rho,\delta}(G)\rightarrow \Psi^{m_1+m_2}_{\rho,\delta}(G)$ is a continuous bilinear mapping between Fr\'echet spaces.
\end{itemize}Moreover, any operator in the class   $ \Psi^{0}_{\rho,\delta}(G)$ admits a  bounded extension from $L^2(G)$ to  $L^2(G).$
\end{theorem}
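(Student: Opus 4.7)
The plan is to verify each of the three assertions (closure under adjoints, under compositions, and $L^2$-continuity at order $0$) by exploiting the matrix-valued quantisation formula \eqref{Quantisation:3} together with Taylor expansions on $G$ adapted to the difference operators $\mathbb{D}^\alpha$ introduced in \eqref{Difference:op:rep}. The overall strategy is the standard symbolic-calculus one: derive asymptotic expansions for $\sigma_{A^*}$ and $\sigma_{A_1\circ A_2}$, show that the remainders lose $(\rho-\delta)$ orders at each step (which is strictly positive because $\rho>\delta$), and finally reduce $L^2$-boundedness to the scalar Calderón--Vaillancourt-type theorem on $\widehat{G}$.

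First, I would prove the Leibniz rule
\begin{equation*}
\mathbb{D}^{\alpha}(\sigma\tau)(x,\xi)=\sum_{\beta+\gamma=\alpha}c_{\beta,\gamma}^{\alpha}\bigl(\mathbb{D}^{\beta}\sigma(x,\xi)\bigr)\bigl(\mathbb{D}^{\gamma}\tau(x,\xi)\bigr),
\end{equation*}
which follows from the algebra structure of the functions $q_{ij}^{\ell}$ in \eqref{Difference:op:rep} and the fact that $\mathbb{D}^{\alpha}$ is the group Fourier multiplier by a polynomial in the matrix entries of $\xi_{\ell}$. Combined with the symbol inequality \eqref{HormanderSymbolMatrix}, this already gives the Fr\'echet continuity of any polynomial operation on symbols that preserves the $(\rho,\delta)$ structure. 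Next, using the quantisation theorem \ref{The:quantisation:thm} and writing
\begin{equation*}
(A_1A_2)f(x)=\sum_{[\xi]}d_{\xi}\mathrm{Tr}\bigl[\xi(x)\,\sigma_{A_1A_2}(x,\xi)\,\widehat{f}(\xi)\bigr],
\end{equation*}
together with the identity $\sigma_{A}(x,\xi)=\xi(x)^{*}A\xi(x)$, I would derive the asymptotic formula
\begin{equation*}
\sigma_{A_1A_2}(x,\xi)\sim\sum_{|\alpha|\geq 0}\frac{1}{\alpha!}\bigl(\mathbb{D}^{\alpha}\sigma_{A_1}(x,\xi)\bigr)\bigl(X^{(\alpha)}\sigma_{A_2}(x,\xi)\bigr),
\end{equation*}
where the terms of order $|\alpha|=N$ belong to $S^{m_1+m_2-N(\rho-\delta)}_{\rho,\delta}(G\times\widehat{G})$ in view of \eqref{HormanderSymbolMatrix}, and the remainder is controlled by integration by parts against a Taylor expansion at the identity of $G$. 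The same Taylor/integration-by-parts machinery, applied to the kernel relation $R_{A^{*}}(x,y)=\overline{R_A(xy,y^{-1})}$, yields the expansion
\begin{equation*}
\sigma_{A^{*}}(x,\xi)\sim\sum_{|\alpha|\geq 0}\frac{1}{\alpha!}\,\mathbb{D}^{\alpha}X^{(\alpha)}\bigl(\sigma_{A}(x,\xi)^{*}\bigr),
\end{equation*}
again with each successive remainder gaining $\rho-\delta>0$ in the order. Since every term in both expansions satisfies symbol estimates of the claimed type and the seminorms are dominated by finitely many seminorms of $\sigma_{A_1},\sigma_{A_2}$, the Fr\'echet continuity of both maps follows.

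The main technical obstacle in the above is controlling the remainder after a finite Taylor expansion: on a compact Lie group one does not have a canonical polynomial calculus, so one has to replace the Euclidean Taylor formula by a Taylor expansion based on the admissible family $\{q_{ij}^{\ell}\}$ (that is, the functions whose gradients at the identity span $T_eG$), and estimate the remainders in the uniform topology of $L(C^{\infty}(G))$. This is where the hypothesis $\rho\neq 1$ is used implicitly: the sequence of orders $m-N(\rho-\delta)$ must decrease strictly to $-\infty$ as $N\to\infty$ so that the asymptotic sum converges in the Fr\'echet sense and the remainder is a smoothing operator.

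For the final claim, $L^2$-boundedness of operators in $\Psi^0_{\rho,\delta}(G)$, I would proceed with the usual $T^{*}T$ argument. Given $A\in\Psi^0_{\rho,\delta}(G)$, by the adjoint and composition parts already proved, $A^{*}A\in\Psi^0_{\rho,\delta}(G)$, and iterating one can assume the symbol has arbitrarily small order at infinity. The key lemma is then that an operator with symbol bounded by $C\langle\xi\rangle^{-s}$ for some $s>\dim(G)/2$ and similar bounds for a finite number of $X^{\beta}\mathbb{D}^{\alpha}$-derivatives is bounded on $L^2(G)$: this follows by a Schur test applied to the matrix kernel of $A$ in the Peter--Weyl basis, exploiting the Plancherel identity $\|f\|_{L^2}^2=\sum_{[\xi]}d_{\xi}\|\widehat{f}(\xi)\|_{\mathrm{HS}}^2$. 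Combining the $T^{*}T$ reduction with this lemma yields the bounded extension $A:L^2(G)\to L^2(G)$. The only delicate point here is bookkeeping: one needs the Fr\'echet continuity of composition to guarantee that the resulting $L^2$ operator norm is controlled by finitely many seminorms of the symbol $\sigma_A$ in $S^0_{\rho,\delta}(G\times\widehat{G})$.
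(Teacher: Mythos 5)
The paper does not prove Theorem~\ref{RTcalculus:Group}; it is recalled from \cite{Ruz} (and \cite{RuzhanskyTurunenWirth2014}) as a preliminary. So I compare your sketch against the argument in that reference rather than against an in-paper proof.

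Your overall route --- Leibniz rule for the difference operators $\mathbb{D}^\alpha$, asymptotic expansions for $\sigma_{A_1A_2}$ and $\sigma_{A^*}$ obtained from a Taylor expansion at the identity adapted to the admissible family $\{q^\ell_{ij}\}$, and a reduction of $L^2$-boundedness to a Schur/Plancherel estimate on $\widehat G$ --- is indeed the method developed in \cite{Ruz}, and for $0\le\delta<\rho\le 1$ it is sound. However, there are two genuine gaps.

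First, the theorem as stated allows $\rho=\delta$. Your composition and adjoint arguments hinge on the remainders gaining $\rho-\delta>0$ orders at each step, and you flag this explicitly; but when $\rho=\delta$ every term in the asymptotic sum has the same order and the expansion does not converge in the Fr\'echet sense, so this route does not prove the algebra property for the full stated range. For $\rho=\delta$ one must estimate $\sigma_{A_1A_2}$ and $\sigma_{A^*}$ directly from the Taylor remainder without summing an asymptotic series (a single finite Taylor step plus a uniform remainder estimate suffices to verify the $(\rho,\rho)$-symbol bounds), rather than via an asymptotic expansion.

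Second, the $L^2$-boundedness step as written is not a valid reduction: if $A\in\Psi^0_{\rho,\delta}$ then $A^*A\in\Psi^0_{\rho,\delta}$ --- still of order $0$ --- so ``iterating one can assume the symbol has arbitrarily small order at infinity'' does not follow. The standard way to make the $T^*T$ idea work when $\rho>\delta$ is H\"ormander's square-root trick: choose $M>\sup_{x,\xi}\|\sigma_A(x,\xi)\|_{\textnormal{op}}$, set $c:=(M^2 I-\sigma_A^*\sigma_A)^{1/2}\in S^0_{\rho,\delta}$, and use the symbolic calculus to get $A^*A+C^*C=M^2I+R$ with $R$ of order $-(\rho-\delta)<0$; iterating on $R$ reduces to a smoothing operator, which is bounded by the Peter--Weyl/Schur argument you describe. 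This again uses $\rho>\delta$. For $\rho=\delta<1$, the square-root trick gives no gain and one must invoke the Cotlar--Stein almost-orthogonality lemma (or a Littlewood--Paley-type decomposition on $\widehat G$) as in the original Calder\'on--Vaillancourt theorem for $\rho=\delta$.

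Two smaller points. The kernel identity you use for the adjoint looks off: with the paper's convention $Af(x)=\int_G R_A(x,xy^{-1})f(y)\,dy$ one gets $R_{A^*}(x,z)=\overline{R_A(z^{-1}x,\,z^{-1})}$, not $\overline{R_A(xz,z^{-1})}$; these differ on a noncommutative $G$. Also, $\rho\neq 1$ is needed not merely for convergence of the asymptotic sum (that requires only $\rho>\delta$) but to exclude the pair $\rho=\delta=1$, for which $L^2$-boundedness at order $0$ fails; it is worth stating that explicitly.
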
 
\begin{remark}
    The $L^2$-boundedness result in Theorem \ref{RTcalculus:Group} is the global  version of the Calder\'on-Vaillancourt theorem for compact Lie groups. Moreover, if $A\in \Psi^0_{\rho,\delta}(G)$ is such that $0\leq \delta\leq \rho\leq 1,$  $\rho\neq 1,$ then
    \begin{equation}
        \Vert A\Vert_{\mathscr{B}(L^2)}\lesssim \sup\{ C_{\alpha,\beta}:|\alpha|+|\beta|\leq \ell\},
    \end{equation}where
    $$ C_{\alpha,\beta}:=\sup_{(x,[\xi])\in G\times\widehat{G} }   \langle\xi\rangle^{\rho|\alpha|-\delta|\beta|} \|\partial_{x}^\beta\mathbb{D}^\alpha \sigma_A(x,\xi)\|_{\textnormal{op}}$$ and $\ell\in \mathbb{N}_0$ is large enough.
\end{remark}

\subsubsection{H\"ormander classes of pseudo-differential operators on compact manifolds}\label{S2.1}
 Next, we shall present the basics related to the classes of pseudo-differential operators on a compact manifold without  boundary (closed manifold) by using  charts, see H\"ormander \cite{Hormander1985III} and  M. Taylor \cite{Taylorbook1981}.
 
\begin{definition}[Symbol classes on open sets]
Let $U\subset \mathbb{R}^n$ be an open  subset. The {\it symbol} $a\in C^{\infty}(U\times \mathbb{R}^n, \mathbb{C})$ belongs to the H\"ormander class  $S^m_{\rho,\delta}(U\times \mathbb{R}^n),$ with $0\leqslant \rho,\delta\leqslant 1,$ and $m\in \mathbb{R},$ if for every compact subset $K\subset U$ and for all $\alpha,\beta\in \mathbb{N}_0^n$, the inequalities
\begin{equation}\label{seminorms}
  |\partial_{x}^\beta\partial_{\xi}^\alpha a(x,\xi)|\leqslant C_{\alpha,\beta,K}(1+|\xi|)^{m-\rho|\alpha|+\delta|\beta|},
\end{equation} hold true uniformly in $(x,\xi)\in K\times \mathbb{R}^n.$
\end{definition}
\begin{remark}[Pseudo-differential operators on Euclidean open subsets]
 A continuous linear operator $A:C^\infty_0(U) \rightarrow C^\infty(U)$ (with respect to the standard Fr\'echet structure of $C^\infty_0(U)$ and of $C^\infty(U),$ respectively) 
is a pseudo-differential operator of order $m,$ and  of  $(\rho,\delta)$-type, if there exists
a symbol $a=a(x,\xi)$ in the class $S^m_{\rho,\delta}(U\times \mathbb{R}^n)$ such that
\begin{equation*}
 \forall f\in C^\infty_0(U),\,\forall x\in \mathbb{R}^n,\,\,   Af(x)=\int\limits_{\mathbb{R}^n}e^{2\pi i x\cdot \xi}a(x,\xi)(\mathscr{F}_{\mathbb{R}^n}{f})(\xi)d\xi,
\end{equation*} where
$$
    (\mathscr{F}_{\mathbb{R}^n}{f})(\xi):=\int\limits_{\mathbb{R}^n}e^{-2\pi i x\cdot \xi}f(x)dx
$$ is the  Euclidean Fourier transform of $f$ at $\xi\in \mathbb{R}^n.$ We denote the family of pseudo-differential operators with symbols in the class $S^m_{\rho,\delta}(U\times \mathbb{R}^n)$ by $\Psi^m_{\rho,\delta}(U).$   
\end{remark}
Now, let us extend the definition of the H\"ormander classes on Euclidean topological subspaces  to closed manifolds as follows. \begin{remark}[Pseudo-differential operators on compact manifolds]
Given a closed manifold $M,$ a  continuous linear operator $A:C^\infty(M)\rightarrow C^\infty(M) $ is a pseudo-differential operator of order $m,$ and of $(\rho,\delta)$-type, when $$  \rho\geqslant   1-\delta \textnormal{  and  } 0\leq \delta<\rho\leq 1,$$   if for every  coordinate patch $\omega: M_{\omega}\subset M\rightarrow U_{\omega}\subset \mathbb{R}^n,$
and for every $\phi,\psi\in C^\infty_0(U_\omega),$ the operator
\begin{equation*}
    Tu:=\psi(\omega^{-1})^*A\omega^{*}(\phi u),\,\,u\in C^\infty(U_\omega),
\end{equation*} is a  pseudo-differential operator with symbol $a_T\in S^m_{\rho,\delta}(U_\omega\times \mathbb{R}^n).$ Here, $\omega^{*}$ and $(\omega^{-1})^*$ are the pullbacks associated to the mappings $\omega$ and $\omega^{-1},$ respectively. All the operators $A$ with this property determines the family $A\in \Psi^m_{\rho,\delta}(M).$ 
\end{remark} 
\begin{remark}[The principal symbol of a pseudo-differential operator]
The symbol defined by localisations of a pseudo-differential operator $A$ is unique as an element in the quotient $\Psi^m_{\rho,\delta}(G)/\Psi^m_{\rho,\delta}(G).$ We call to this class  the {\it principal symbol of} $A.$  We denote it by
\begin{equation}
    a_m(x,\xi),\,\, (x,\xi)\in T^{*}M.\end{equation}
    The symbol is a well-defined section of the co-tangent bundle $T^*M$ if and only if
 $$  \rho\geqslant   1-\delta \textnormal{  and  } 0\leq \delta<\rho\leq 1.$$
Then, the main feature of the principal symbol of a pseudo-differential operator is that it remains invariant under changes of coordinates. 
\end{remark}

In the next result, we summarise  some fundamental properties of the   calculus  of pseudo-differential operators as defined by    H\"ormander (\cite{Hormander1985III}).
\begin{theorem}\label{calculus} Let $0\leqslant \delta<\rho\leqslant 1,$ be such that $\rho\geq 1-\delta.$ Then $\Psi^\infty_{\rho,\delta}(M):=\cup_{m\in \mathbb{R}} \Psi^m_{\rho,\delta}(M)$ is an algebra of operators stable under compositions and adjoints, that is:
\begin{itemize}
    \item [-] the mapping $A\mapsto A^{*}:\Psi^{m}_{\rho,\delta}(M)\rightarrow \Psi^{m}_{\rho,\delta}(M)$ is a continuous linear mapping between Fr\'echet spaces. 
\item [-] The mapping $(A_1,A_2)\mapsto A_1\circ A_2: \Psi^{m_1}_{\rho,\delta}(M)\times \Psi^{m_2}_{\rho,\delta}(M)\rightarrow \Psi^{m_1+m_2}_{\rho,\delta}(M)$ is a continuous bilinear mapping between Fr\'echet spaces.
\end{itemize}Moreover, any operator in the class   $ \Psi^{0}_{\rho,\delta}(M)$ admits a  bounded extension from $L^2(M)$ to  $L^2(M).$
\end{theorem}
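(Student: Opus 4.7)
The plan is to reduce everything to the Euclidean theory via a finite atlas and a subordinate partition of unity, and then to invoke the invariance of $S^m_{\rho,\delta}$ under diffeomorphisms, which is precisely what the restriction $\rho\geq 1-\delta$ buys us. Fix a finite open cover $\{M_{\omega_i}\}_{i=1}^N$ of $M$ by coordinate patches with charts $\omega_i:M_{\omega_i}\to U_{\omega_i}\subset\mathbb{R}^n$, a subordinate smooth partition of unity $\{\phi_i\}$, and auxiliary cut-offs $\psi_i\in C^\infty_0(M_{\omega_i})$ with $\psi_i\equiv 1$ on $\mathrm{supp}(\phi_i)$. Writing $I=\sum_i\phi_i$ and inserting $\psi_i$ factors, any $A\in\Psi^m_{\rho,\delta}(M)$ decomposes as a finite sum of operators $\phi_i A\psi_j$, where the diagonal pieces $\phi_i A\psi_i$ become, after pull-back by $\omega_i$, Euclidean pseudo-differential operators with symbols in $S^m_{\rho,\delta}(U_{\omega_i}\times\mathbb{R}^n)$, while the off-diagonal pieces (with disjoint supports) are smoothing in the sense of having Schwartz kernels in $C^\infty(M\times M)$, and hence lie in $\Psi^{-\infty}_{\rho,\delta}(M)$.

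\medskip
The heart of the proof is therefore the Euclidean statement: if $a\in S^{m_1}_{\rho,\delta}(U\times\mathbb{R}^n)$ and $b\in S^{m_2}_{\rho,\delta}(U\times\mathbb{R}^n)$ and both operators are properly supported, then $\mathrm{Op}(a)\circ\mathrm{Op}(b)=\mathrm{Op}(a\#b)$ and $\mathrm{Op}(a)^{*}=\mathrm{Op}(a^{*})$ with symbols admitting the classical asymptotic expansions
\begin{equation*}
    a\#b(x,\xi)\sim \sum_{\alpha}\frac{1}{\alpha!}(\partial_{\xi}^{\alpha}a)(x,\xi)\,(D_{x}^{\alpha}b)(x,\xi),\qquad a^{*}(x,\xi)\sim \sum_{\alpha}\frac{1}{\alpha!}\partial_{\xi}^{\alpha}D_{x}^{\alpha}\overline{a(x,\xi)},
\end{equation*}
in $S^{m_1+m_2}_{\rho,\delta}$ and $S^{m_1}_{\rho,\delta}$ respectively. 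The typical term in the composition is of order $m_1+m_2-(\rho-\delta)|\alpha|$, so the requirement $\rho>\delta$ (already part of the hypothesis) guarantees that successive terms really do decrease in order and that the expansion produces a legitimate symbol via Borel's summation. Continuity in the Fréchet topologies follows by tracking the seminorms $C_{\alpha,\beta,K}$ through each step (oscillatory integrals, stationary phase, and the explicit bounds in the asymptotic remainders), which is a routine but lengthy computation I would just cite from H\"ormander \cite{Hormander1985III}.

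\medskip
To patch the local statements back together one needs precisely the invariance of $S^m_{\rho,\delta}$ under coordinate changes, and this is the step I expect to be the conceptual obstacle (and the reason for the condition $\rho\geq 1-\delta$). If $\kappa:U\to V$ is a diffeomorphism between open sets in $\mathbb{R}^n$ and $A=\mathrm{Op}(a)$ with $a\in S^m_{\rho,\delta}(U\times\mathbb{R}^n)$, then $\kappa^{*}A(\kappa^{-1})^{*}$ has a symbol with asymptotic expansion
\begin{equation*}
    \tilde{a}(y,\eta)\sim \sum_{\alpha}\frac{1}{\alpha!}\,(\partial_{\xi}^{\alpha}a)\bigl(\kappa^{-1}(y),\,{}^{t}(D\kappa^{-1}(y))^{-1}\eta\bigr)\,\Phi_{\alpha}(y,\eta),
\end{equation*}
where the functions $\Phi_{\alpha}$ come from a Taylor expansion of $\kappa$ and are polynomials of degree $\leq|\alpha|$ in $\eta$. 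The term of index $\alpha$ then sits in the symbol class of order $m-\rho|\alpha|+(1-\rho)|\alpha|=m-(2\rho-1)|\alpha|$ after accounting for the polynomial factor, and ultimately one finds that the full expansion yields a symbol in $S^m_{\rho,\delta}$ if and only if $\rho\geq 1-\delta$. With invariance in hand, the algebra properties on $M$ follow by assembling the local compositions and adjoints with the partition of unity, absorbing the cross-terms as smoothing operators, and estimating seminorms chart by chart.

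\medskip
Finally, for the $L^2$-boundedness of $A\in\Psi^0_{\rho,\delta}(M)$ I would localise as above to obtain $\phi_i A\psi_i$ as Euclidean operators with symbols in $S^0_{\rho,\delta}(U_{\omega_i}\times\mathbb{R}^n)$, apply the Calder\'on--Vaillancourt theorem (valid under $0\leq\delta\leq\rho\leq 1$ with $\rho<1$, or more classically for $0\leq\delta<\rho\leq 1$) on each chart to bound these in $\mathscr{B}(L^2)$, handle the smoothing cross-terms trivially, and sum with the triangle inequality using that $M^{1/2}dx$-volumes of the charts are finite. The resulting $L^2$-norm is controlled by a finite number of the seminorms defining $S^0_{\rho,\delta}(M)$, which also yields the claimed continuity of the map $A\mapsto A$ from $\Psi^0_{\rho,\delta}(M)$ into $\mathscr{B}(L^2(M))$.
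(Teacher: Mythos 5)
The paper does not supply a proof of Theorem \ref{calculus}: it is stated as a summary of classical facts and attributed to H\"ormander \cite{Hormander1985III} (with the $L^2$-boundedness going back to \cite{CalderonVaillancourt71}). Your sketch follows precisely the route those references take---localisation via a finite partition of unity, the Euclidean symbolic calculus for composition and adjoint, invariance of $S^m_{\rho,\delta}$ under diffeomorphisms, and Calder\'on--Vaillancourt on each chart---so there is no alternative ``paper approach'' to contrast it with; you have simply unpacked the citation.

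One step in your coordinate-change argument is, however, imprecise in a way worth flagging. You attribute the need for $\rho\geq 1-\delta$ to the polynomial factors $\Phi_\alpha$ and compute the $\alpha$-term's order as $m-\rho|\alpha|+(1-\rho)|\alpha|=m-(2\rho-1)|\alpha|$. In fact $\Phi_\alpha$ has degree $\leq |\alpha|/2$ in $\eta$, so the naive order of the $\alpha$-term is $m-(\rho-\tfrac12)|\alpha|$; either way, this bookkeeping only yields the weaker condition $\rho>\tfrac12$, not $\rho\geq 1-\delta$. The constraint $\rho\geq 1-\delta$ actually arises already in the \emph{leading} term $b(y,\eta)=a\bigl(\kappa^{-1}(y),{}^{t}(D\kappa^{-1}(y))^{-1}\eta\bigr)$: differentiating in $y$ produces, by the chain rule, a $\partial_x a$ contribution (gain $\langle\xi\rangle^{\delta}$) and a $\partial_\xi a\cdot(\partial_y T)\eta$ contribution (gain $\langle\xi\rangle^{1-\rho}$), so the pulled-back symbol lies a priori only in $S^m_{\rho,\max(\delta,1-\rho)}$, and one recovers the original class exactly when $\max(\delta,1-\rho)=\delta$, i.e.\ $\rho\geq 1-\delta$. (Your $\rho>\tfrac12$ is then an automatic consequence since $\delta<\rho$.) With that clarification, the rest of your sketch is a faithful account of the classical proof and is sound.
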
 
\begin{remark}
    The $L^2$-continuity statement of Theorem \ref{calculus} is the microlocalised version of the celebrated Calder\'on-Vaillancourt theorem, see the classical reference \cite{CalderonVaillancourt71}. Also, if $A\in \Psi^0_{\rho,\delta}(\mathbb{R}^n)$ is such that $0\leq \delta\leq \rho\leq 1,$  $\rho\neq 1,$ then ones has the estimate from above for the $L^2$-operator norm of $A$
    \begin{equation}
        \Vert A\Vert_{\mathscr{B}(L^2)}\lesssim \sup_{|\alpha|+|\beta|\leq [\frac{n}{2}]+1} C_{\alpha,\beta},
    \end{equation}where
    $$ C_{\alpha,\beta}:=\sup_{(x,\xi)\in \mathbb{R}^{2n}}   (1+|\xi|)^{\rho|\alpha|-\delta|\beta|} |\partial_{x}^\beta\partial_{\xi}^\alpha a(x,\xi)|.$$
\end{remark}
\begin{remark}\label{remark:CVTh}
    If $U\subset M$ is an open subset and the  dimension of $M $ is $n,$ for all $f\in C^\infty_0(U),$  by  microlocalising   $A\in \Psi^0_{\rho,\delta}(M)$ when $0\leq \delta < \rho\leq 1,$ and $\rho\geq 1-\delta,$ one has
    \begin{equation}
        \Vert A f\Vert_{L^2}\lesssim_{U} \sup_{|\alpha|+|\beta|\leq [\frac{n}{2}]+1} C_{\alpha,\beta,U}\Vert f\Vert_{L^2(G)},
    \end{equation}where
    $$ C_{\alpha,\beta,U}:=\sup_{(x,\xi)\in U\times\mathbb{R}^{n}}   (1+|\xi|)^{\rho|\alpha|-\delta|\beta|} |\partial_{x}^\beta\partial_{\xi}^\alpha a(x,\xi)|,$$ by making the identification of $U$ with an open subset of $\mathbb{R}^n.$
\end{remark}
\begin{remark}[Elliptic pseudo-differential operators]
   A pseudo-differential operator $A\in \Psi^m_{\rho,\delta}(M)$ is  elliptic of order $m,$ if  in any local coordinate system $U$, there exists $R=R_U>0,$ such that the symbol $a=a_U$ of $A$ associated to $U$ satisfies uniformly on any compact subset $K\subset{U}$ the growth estimate
\begin{equation}\label{Ellipticity}
   C_1 (1+|\xi|)^m\leq   |a(x,\xi)|\leq C_2 (1+|\xi|)^m, \, |\xi|\geq R,
\end{equation}uniformly in $(x,\xi)\in K\times \mathbb{R}^n.$  One of the main aspects of the spectral theory of elliptic pseudo-differential  operators is that their spectra are  purely discrete sets  (\cite{Hormander1985III}). 
\end{remark}
\begin{remark}
    The global H\"ormander classes on compact Lie groups can be used to describe the H\"ormander classes defined by local coordinate systems. We present the corresponding statement as follows.
\end{remark}
 
\begin{theorem}[Equivalence of classes in the range $0\leq \delta<\rho\leq 1,$ $\rho\geq 1-\delta,$ \cite{Ruz,RuzhanskyTurunenWirth2014}] Let $A:C^{\infty}(G)\rightarrow\mathscr{D}'(G)$ be a continuous linear operator and let $0\leq \delta<\rho\leq 1,$ with $\rho\geq 1-\delta.$ Then, $A\in \Psi^m_{\rho,\delta}(G)$ if and only if $A\in \Psi^m_{\rho,\delta}(G\times \widehat{G}).$ 
\end{theorem}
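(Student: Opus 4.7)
The plan is to establish the two inclusions separately. For the implication $A\in \Psi^{m}_{\rho,\delta}(G)\Rightarrow A\in \Psi^{m}_{\rho,\delta}(G\times\widehat{G})$, I will exploit the identity $\sigma_A(x,\xi)=\xi(x)^{*}A\xi(x)$ from Theorem \ref{The:quantisation:thm}. To control $X^\beta_x \mathbb{D}^\alpha \sigma_A(x,\xi)$, I interpret each difference operator $\mathbb{D}^\alpha$ as multiplication on the physical side by products of the polynomials $q^{\ell}_{ij}(g)=\xi_{\ell}(g)_{ij}-\delta_{ij}$, which vanish at the identity by construction. After a partition of unity subordinated to coordinate charts, the problem reduces to bounding local $(\rho,\delta)$-type pseudo-differential operators applied to the entries of $\xi(x)$. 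Since these entries are joint eigenfunctions of $\mathcal{L}_G$ with eigenvalue $\lambda_{[\xi]}$, ellipticity of the Laplacian together with the local symbol estimates \eqref{seminorms} yield exactly the gain of $\langle\xi\rangle^{-\rho|\alpha|}$ per difference (through Taylor expansion of $q^\ell$ at $e$) and the loss $\langle\xi\rangle^{\delta|\beta|}$ per left-invariant derivative.

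For the converse implication $A\in \Psi^{m}_{\rho,\delta}(G\times\widehat{G})\Rightarrow A\in \Psi^{m}_{\rho,\delta}(G)$, I fix a coordinate chart $\omega:M_\omega \to U_\omega$ and cut-offs $\phi,\psi\in C^\infty_0(U_\omega)$, and study the localised operator $Tu=\psi(\omega^{-1})^{*}A\omega^{*}(\phi u)$. Using the quantisation formula \eqref{Quantisation:3},
\begin{equation*}
    Af(x)=\sum_{[\xi]\in \widehat{G}} d_\xi \textnormal{Tr}[\xi(x)\sigma_A(x,\xi)\widehat{f}(\xi)],
\end{equation*}
and writing $x=\exp(X)$ in exponential coordinates around a base point, I plan to carry out an asymptotic comparison of the matrix coefficients $\xi(\exp(X))$ with the Euclidean plane waves $e^{2\pi i x\cdot \theta}$, linking $\langle\xi\rangle$ with the Euclidean frequency through Weyl-type asymptotics for $\mathcal{L}_G$. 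This rewrites $T$ as an oscillatory integral with a scalar Kohn--Nirenberg amplitude; the matrix-valued $(\rho,\delta)$-estimates on $\sigma_A$ then translate, via a careful Taylor analysis, into the local scalar $(\rho,\delta)$-estimates required of this amplitude.

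The main obstacle is precisely this backward direction, since it requires converting a discrete, matrix-valued object on $\widehat{G}$ into a scalar symbol on $T^{*}M_\omega$ while preserving the symbol class. The crucial technical ingredient is a Sugiura-type asymptotic expansion comparing representation-theoretic data on $G$ with Euclidean frequencies, uniform in all parameters and valid at arbitrary order. It is at this step that the condition $\rho\geq 1-\delta$ becomes indispensable: without it, the remainder terms in the symbolic expansion would fall outside the H\"ormander class, and the resulting amplitude would fail to transform correctly under changes of coordinates (in harmony with the invariance discussion following \eqref{rho:delta:restr}). Once this correspondence is established, the inclusion $T\in \Psi^m_{\rho,\delta}(U_\omega)$ follows by reading off the local symbol from the amplitude, completing the proof of the equivalence.
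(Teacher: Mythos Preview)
The paper does not prove this theorem: it is stated in the Preliminaries with attribution to \cite{Ruz,RuzhanskyTurunenWirth2014}, and no argument is given. There is therefore nothing in the present paper to compare your proposal against.

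As an outline of the result in the cited references, your forward direction is in the right spirit: the identity $\sigma_A(x,\xi)=\xi(x)^{*}A\xi(x)$ together with the vanishing of the $q^{\ell}_{ij}$ at $e$ is indeed how one extracts the $\langle\xi\rangle^{-\rho|\alpha|}$ gain, and left-invariant derivatives produce the $\langle\xi\rangle^{\delta|\beta|}$ loss via the local calculus. The backward direction, however, is where your sketch is thinnest and where the actual work in \cite{RuzhanskyTurunenWirth2014} lies. The passage from the discrete matrix-valued symbol on $\widehat{G}$ to a scalar Kohn--Nirenberg amplitude on a chart is not achieved by a direct ``asymptotic comparison of $\xi(\exp X)$ with $e^{2\pi i x\cdot\theta}$'' as you suggest; the proof in the literature proceeds instead through a Beals-type commutator characterisation of $\Psi^m_{\rho,\delta}$, showing that the matrix-valued inequalities \eqref{HormanderSymbolMatrix} imply the correct mapping properties of iterated commutators of $A$ with vector fields and with multiplication by smooth functions, which in turn forces $A$ into the local class. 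Your phrases ``Sugiura-type asymptotic expansion'' and ``Weyl-type asymptotics'' gesture at the relevant spectral behaviour but do not by themselves give uniform control of remainders in all of $x$, $[\xi]$, $\alpha$, $\beta$; if you pursue this route you will need to supply that machinery explicitly, or replace it with the commutator argument of \cite{RuzhanskyTurunenWirth2014}.
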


\subsubsection{Complex powers of an elliptic pseudo-differential operator on a compact Lie group} 
By using the Dunford-Riesz functional calculus,  to any  sector 
$   \Lambda\subset\mathbb{ C}, $ of the complex plane 
we will associate a class class of elliptic operators $\Psi^m_{\rho,\delta}(G\times \widehat{G};\Lambda)$ as developed by the third author and J. Wirth in \cite{RuzhanskyWirth2014}. There, one extended for any $0\leq \delta<\rho\leq 1$ the global functional calculus on compact manifolds due to Shubin \cite{Shubin1987} under the restrictions $0\leq \delta<\rho\leq 1$ and $0\leq \delta<\rho\leq 1.$ 

In  practice, $\Lambda$ will be any angle with the vertex at some complex number $z_0\in \mathbb{C}$. 
Now we will introduce the definition of parameter-elliptic global symbols as in \cite{RuzhanskyWirth2014}.

\begin{definition}[Parameter-ellipticity with respect to a sector $\Lambda$]
   Let $$ a=a(x,[\theta]):G\times \widehat{G}\rightarrow \bigcup_{[\xi]\in \widehat{G}}\mathbb{C}^{d_\xi\times d_\xi}$$  be a matrix-valued symbol. For $m\in \mathbb{R}^+,$  we say that $a$ is parameter-elliptic with respect to $\Lambda,$  if the following conditions are satisfied:
\begin{itemize}
    \item $\forall\lambda\in \Lambda,\, a(x,[\theta])-\lambda I_{d_\theta}\in \textnormal{GL}(d_\theta,\mathbb{C})$ is an invertible matrix. 
    \item The symbol inequality
\begin{equation*}
  \|( a(x,[\theta])-\lambda I_{d_\theta})^{-1}\|_{\textnormal{op}}\leqslant C(1+\langle\theta\rangle+|\lambda|^{\frac{1}{m}})^{-m},
\end{equation*} holds  uniformly in $x\in G,$ for all  $[\theta]\in \widehat{G}$ and all $\lambda\in \Lambda.$ In the case where $\Lambda=\{0\}$ is the trivial singleton, we just will say that the symbol $a(x,[\theta])$ is elliptic.
\end{itemize} 
 
\end{definition}
The following shows that we can use the notion of parameter-ellipticity in the construction of parametrices for the resolvent of an operator. The theorems below were proved in \cite{RuzhanskyWirth2014} and their corollaries are their immediate consequences. The proof of Lemma \ref{LemmaFC} below can be found in \cite[Section 7.2]{CardonaRuzhanskyC}.

\begin{theorem}\label{parameterparametrix}
Let $m>0,$ and let $0\leqslant \delta<\rho\leqslant 1.$ Let  $a(x,[\theta])$ be a parameter elliptic symbol with respect to a sector $\Lambda.$ Then  there exists a parameter-dependent parametrix of the resolvent $A-\lambda I,$ with matrix-valued symbol $a^{-\#}(x,\theta,\lambda)$ satisfying the estimates
\begin{equation*}
   \sup_{\lambda\in \Lambda}\sup_{(x,[\theta])\in G\times \widehat{G}}\Vert (|\lambda|^{\frac{1}{m}}+\langle\theta\rangle)^{m(k+1)}\langle\theta\rangle^{\rho|\alpha|-\delta|\beta|}\partial_{\lambda}^k{X}_x^{\beta}\mathbb{D}^{\alpha}a^{-\#}(x,\theta,\lambda)\Vert_{\textnormal{op}}<\infty,
\end{equation*}for all $\alpha,\beta\in \mathbb{N}_0^n$ and $k\in \mathbb{N}_0.$
\end{theorem}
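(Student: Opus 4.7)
The plan is to construct $a^{-\#}(x,\theta,\lambda)$ by the standard resolvent-parametrix algorithm adapted to the matrix-valued global symbol calculus on $G$, starting from the pointwise inverse and correcting by asymptotic summation.

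First I would take $a_0^{-\#}(x,[\theta],\lambda) := (a(x,[\theta]) - \lambda I_{d_\theta})^{-1}$, which is well-defined for every $\lambda \in \Lambda$ by the first bullet of the parameter-ellipticity hypothesis, and whose operator norm is controlled by the second bullet, giving exactly the target estimate in the case $k=|\alpha|=|\beta|=0$. The key computation is then to differentiate this matrix-valued function. For $\partial_\lambda$ one uses $\partial_\lambda (a-\lambda I)^{-1} = (a-\lambda I)^{-2}$, inductively producing $\partial_\lambda^k (a-\lambda I)^{-1} = k!\,(a-\lambda I)^{-(k+1)}$, which when combined with the parameter-ellipticity bound picks up the factor $(|\lambda|^{1/m}+\langle\theta\rangle)^{-m(k+1)}$ required by the statement. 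For $X_x^\beta$-derivatives one applies $X_x (a-\lambda I)^{-1} = -(a-\lambda I)^{-1}(X_x a)(a-\lambda I)^{-1}$ and iterates via Leibniz, with each $X_x a$ factor contributing $\langle\theta\rangle^{m+\delta|\beta|}$ from the symbol estimates and each $(a-\lambda I)^{-1}$ factor contributing $(|\lambda|^{1/m}+\langle\theta\rangle)^{-m}$; a careful bookkeeping of the total number of resolvent factors against the weights from the symbol class $S^m_{\rho,\delta}(G\times\widehat{G})$ yields the stated bound. The analogous argument for the difference operators $\mathbb{D}^\alpha$ uses the Leibniz-type rule valid for $\mathbb{D}^\alpha$ acting on products of matrix-valued symbols (see \cite{Ruz}), together with the observation that $\mathbb{D}^\alpha$ applied to $(a-\lambda I)^{-1}$ expands as a sum of products alternating between $(a-\lambda I)^{-1}$ factors and $\mathbb{D}^{\alpha'} a$ factors with $|\alpha'| \leq |\alpha|$.

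Having verified that $a_0^{-\#}$ satisfies the required estimates, the second step is to show that $\mathrm{Op}(a_0^{-\#})$ is an approximate right inverse of $A - \lambda I$. By the composition formula in the class $\Psi^\infty_{\rho,\delta}(G\times\widehat{G})$ (Theorem \ref{RTcalculus:Group}), the symbol of $(A-\lambda I)\,\mathrm{Op}(a_0^{-\#})$ admits an asymptotic expansion of the form $I_{d_\theta} + r_1(x,[\theta],\lambda)$ where $r_1$ is a sum of terms in which each $\mathbb{D}^{\alpha'} a$ is paired with $X_x^{\beta'} a_0^{-\#}$ for $|\alpha'|=|\beta'|\geq 1$; inspecting the weights shows that $r_1$ gains a factor of $\langle\theta\rangle^{-(\rho-\delta)}$ relative to $I_{d_\theta}$ and retains the same $\lambda$-decay. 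Iteratively defining correction terms $a_j^{-\#} := -a_0^{-\#}\,r_j$ so that $(A-\lambda I)\,\mathrm{Op}(\sum_{j=0}^N a_j^{-\#}) = I + r_{N+1}$ with $r_{N+1}$ of order $-N(\rho-\delta)$ (uniformly in $\lambda \in \Lambda$), and then invoking asymptotic summation in the class to define
\begin{equation*}
a^{-\#}(x,\theta,\lambda) \sim \sum_{j=0}^\infty a_j^{-\#}(x,\theta,\lambda),
\end{equation*}
yields a parametrix whose symbol satisfies the stated bounds for every $k,\alpha,\beta$.

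The main obstacle will be the combinatorial bookkeeping in the third step: each $\mathbb{D}^\alpha$ applied to a product of matrix-valued factors produces many terms, and to track the parameter-dependent weights one must simultaneously count the number of resolvent factors (which govern the $\lambda$-decay) and the total $\rho$-order lost to $\mathbb{D}^\alpha$ versus the $\delta$-order gained from $X_x^\beta$. A clean induction on $|\alpha|+|\beta|+k$, with the inductive hypothesis stated in the exact weighted form of the conclusion, should handle this, and the asymptotic summation within $S^m_{\rho,\delta}(G\times\widehat{G})$ in the parameter-dependent scale proceeds as in \cite{RuzhanskyWirth2014} once the uniformity in $\lambda \in \Lambda$ of each seminorm of $a_j^{-\#}$ is established.
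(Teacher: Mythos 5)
The paper does not prove Theorem \ref{parameterparametrix} itself; it states it as a known result from \cite{RuzhanskyWirth2014}, whose proof is precisely the resolvent-parametrix construction you outline (pointwise inverse as principal term, verification of the weighted symbol estimates by differentiation, approximate-inverse check via the composition formula, Neumann iteration, asymptotic summation). So your route coincides with the reference's and your bookkeeping of the $\lambda$-weights (each resolvent factor contributing $(|\lambda|^{1/m}+\langle\theta\rangle)^{-m}$ and each symbol factor absorbed via $\langle\theta\rangle^m \le (|\lambda|^{1/m}+\langle\theta\rangle)^m$) is the right idea.

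One place in your sketch where the argument is genuinely more delicate than you indicate is the treatment of the difference operators $\mathbb{D}^\alpha$ applied to the inverse $(a-\lambda I)^{-1}$. Unlike $X_x$, a first-order difference operator $\mathbb{D}$ does not satisfy the exact derivation identity $\mathbb{D}(\sigma\tau)=(\mathbb{D}\sigma)\tau+\sigma(\mathbb{D}\tau)$; in the compact Lie group calculus of \cite{Ruz} the Leibniz-type rule carries an additional quadratic cross term of the schematic form $\sum_k(\mathbb{D}\sigma)_{ik}(\mathbb{D}\tau)_{kj}$. Consequently, starting from $(a-\lambda I)(a-\lambda I)^{-1}=I$ and applying $\mathbb{D}$ gives only an \emph{implicit} relation for $\mathbb{D}[(a-\lambda I)^{-1}]$, which must be resolved iteratively (the resolved expansion contains terms with several resolvent factors and several difference quotients of $a$, not simply the clean alternating sandwich $\tau(\mathbb{D}a)\tau$). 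This does not break the proof — each extra $\mathbb{D}a$ factor is accompanied by an extra resolvent factor, so the same absorption inequality gives the stated bound $(|\lambda|^{1/m}+\langle\theta\rangle)^{-m}\langle\theta\rangle^{-\rho|\alpha|+\delta|\beta|}$ — but the ``alternating products'' picture should be replaced by the correct difference-Leibniz expansion, and the induction on $|\alpha|$ must be set up to accommodate the cross terms. A second, minor point: you cite Theorem \ref{RTcalculus:Group} for the composition formula, but that statement in the paper is phrased for $\Psi^m_{\rho,\delta}(G)$ with $\rho\neq 1$; what is actually needed here is the composition calculus for the purely symbolic classes $\Psi^m_{\rho,\delta}(G\times\widehat G)$ in the full range $0\le\delta<\rho\le 1$, which is established in \cite{RuzhanskyTurunenWirth2014} and is what one should invoke.
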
 As a consequence of the previous theorem, we have an efficient classification of the symbol of the resolvent of an operator in the global H\"ormander classes, see \cite{RuzhanskyWirth2014}.
\begin{corollary}\label{resolv}
Let $m>0,$ and let $a\in S^{m}_{\rho,\delta}(G\times \widehat{G}) $ where  $0\leqslant \delta<\rho\leqslant 1.$ Let us assume that $\Lambda$ is a subset of the $L^2$-resolvent set of $A,$ $\textnormal{Resolv}(A):=\mathbb{C}\setminus \textnormal{Spec}(A).$ Then $A-\lambda I$ is invertible on $\mathscr{D}'(G)$ and the symbol of the resolvent operator $\mathcal{R}_{\lambda}:=(A-\lambda I)^{-1},$ $\widehat{\mathcal{R}}_{\lambda}(x,\xi)$ belongs to $S^{-m}_{\rho,\delta}(G\times \widehat{G}).$ 
\end{corollary}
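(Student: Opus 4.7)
The plan is to derive the corollary from Theorem \ref{parameterparametrix} by combining the parameter-dependent parametrix constructed there with the a priori invertibility of $A-\lambda I$ that is guaranteed by $\lambda\in \textnormal{Resolv}(A)$. First, apply Theorem \ref{parameterparametrix} to obtain a parametrix $B_\lambda$ for the operator $A-\lambda I$ whose matrix-valued symbol $a^{-\#}(x,\theta,\lambda)$ satisfies the uniform estimates stated there. Taking $k=0$ in those estimates and using the trivial bound $(|\lambda|^{\frac{1}{m}}+\langle\theta\rangle)^{-m}\leq \langle\theta\rangle^{-m}$, for each fixed $\lambda\in \Lambda$ one reads off
\begin{equation*}
\Vert {X}_x^\beta \mathbb{D}^\alpha a^{-\#}(x,\theta,\lambda)\Vert_{\textnormal{op}}\leq C_{\alpha,\beta}(\lambda)\,\langle\theta\rangle^{-m-\rho|\alpha|+\delta|\beta|}
\end{equation*}
for all multi-indices $\alpha,\beta$, which is exactly the statement that $a^{-\#}(\cdot,\cdot,\lambda)\in S^{-m}_{\rho,\delta}(G\times \widehat{G})$, equivalently $B_\lambda\in \Psi^{-m}_{\rho,\delta}(G\times \widehat{G})$.

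By the composition rule in Theorem \ref{RTcalculus:Group} applied to $B_\lambda$ and $A-\lambda I\in \Psi^{m}_{\rho,\delta}(G\times \widehat{G})$, one obtains the parametrix identity
\begin{equation*}
B_\lambda\circ (A-\lambda I)=I-T_\lambda,
\end{equation*}
where $T_\lambda$ is a smoothing operator, i.e.\ its symbol belongs to $S^{-\infty}(G\times \widehat{G})$. Because $\lambda\in \textnormal{Resolv}(A)$, the operator $\mathcal{R}_\lambda:=(A-\lambda I)^{-1}$ exists as a bounded linear operator on $L^2(G)$, and elliptic regularity for the elliptic operator $A-\lambda I$ on the compact manifold $G$ extends $\mathcal{R}_\lambda$ to a continuous endomorphism of $\mathscr{D}'(G)$, which proves the first assertion. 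Right-multiplying the parametrix identity by $\mathcal{R}_\lambda$ yields the key representation
\begin{equation*}
\mathcal{R}_\lambda=B_\lambda+T_\lambda\circ \mathcal{R}_\lambda.
\end{equation*}

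The correction term $T_\lambda\circ \mathcal{R}_\lambda$ is itself smoothing: the Schwartz kernel of $T_\lambda$ lies in $C^\infty(G\times G)$, so $T_\lambda$ maps $\mathscr{D}'(G)$ continuously into $C^\infty(G)$, and composing on the right with $\mathcal{R}_\lambda\colon \mathscr{D}'(G)\to \mathscr{D}'(G)$ preserves this property; thus $T_\lambda\circ \mathcal{R}_\lambda\in \Psi^{-\infty}(G\times \widehat{G})$ and its matrix-valued symbol lies in $S^{-\infty}\subset S^{-m}_{\rho,\delta}(G\times \widehat{G})$. Summing the symbol of $B_\lambda$ with that of $T_\lambda\circ \mathcal{R}_\lambda$ exhibits $\sigma_{\mathcal{R}_\lambda}$ as an element of $S^{-m}_{\rho,\delta}(G\times \widehat{G})$, as claimed. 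The principal obstacle is the identification of $T_\lambda\circ \mathcal{R}_\lambda$ with a genuine pseudo-differential operator of order $-\infty$ rather than merely a smoothing linear map between distribution spaces; this is handled by invoking the characterisation of smoothing operators by the smoothness of their Schwartz kernels and then translating back to the global matrix-valued symbol via Theorem \ref{The:quantisation:thm}.
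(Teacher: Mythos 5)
Your argument is essentially the standard parametrix-plus-exact-inverse route that the paper intends when it says Corollary~\ref{resolv} is ``a consequence'' of Theorem~\ref{parameterparametrix} and refers to \cite{RuzhanskyWirth2014}: read off from the $k=0$ case of the uniform estimates that $a^{-\#}(\cdot,\cdot,\lambda)\in S^{-m}_{\rho,\delta}(G\times\widehat G)$, use $B_\lambda\circ(A-\lambda I)=I-T_\lambda$ to write $\mathcal R_\lambda=B_\lambda+T_\lambda\circ\mathcal R_\lambda$, and observe that the correction is a smoothing operator because $T_\lambda$ has a $C^\infty$ kernel and $\mathcal R_\lambda$ is continuous on $\mathscr D'(G)$ by elliptic regularity, so the composite maps $\mathscr D'(G)$ continuously into $C^\infty(G)$. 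This is correct and matches the paper's intent. One small remark worth flagging: the corollary's hypotheses, as literally written, only say $a\in S^m_{\rho,\delta}$ with $\Lambda\subset\textnormal{Resolv}(A)$, and your proof (correctly) needs the additional hypothesis from the surrounding discussion that $a$ is parameter-elliptic with respect to $\Lambda$ (or at least elliptic) to invoke Theorem~\ref{parameterparametrix} and elliptic regularity; this is implicit in the paper's presentation, and you could make the dependence explicit. Likewise, the passage from ``$T_\lambda\circ\mathcal R_\lambda$ is a continuous map $\mathscr D'(G)\to C^\infty(G)$'' to ``it is in $\Psi^{-\infty}$'' uses that on a compact group such maps are exactly those with smooth Schwartz kernel, and then the equivalence of smooth-kernel operators with order-$(-\infty)$ global symbols via Theorem~\ref{The:quantisation:thm}; you state this and it is the right bridge.
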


\begin{remark}[Functional calculus of global pseudo-differential operators]\label{Global:complex:calculus}
    Let $a\in S^{m}_{\rho,\delta}(G\times \widehat{G})$ be a parameter elliptic symbol  of order $m>0$ with respect to the sector $\Lambda\subset\mathbb{C}.$ For $A=\textnormal{Op}(a),$ we will  define the operator $F(A)$  by the (Dunford-Riesz) complex functional calculus
\begin{equation}\label{F(A)}
    F(A)=-\frac{1}{2\pi i}\oint\limits_{\partial \Lambda_\varepsilon}F(z)(A-zI)^{-1}dz,
\end{equation}where
\begin{itemize}
    \item[(A1):] $\Lambda_{\varepsilon}:=\Lambda\cup \{z:|z|\leqslant \varepsilon\},$ $\varepsilon>0,$ and $\Gamma=\partial \Lambda_\varepsilon\subset\textnormal{Resolv}(A)$ is a positively oriented path in the complex plane $\mathbb{C}$.
    \item[(A2):] $F$ is an holomorphic function in $\mathbb{C}\setminus \Lambda_{\varepsilon},$ and continuous on its closure. 
    \item[(A3):] We will assume  decay of $F$ along $\partial \Lambda_\varepsilon$ in order that the operator \eqref{F(A)} will be densely defined on $C^\infty(G)$ in the strong sense of the topology on $L^2(G).$
\end{itemize}
\end{remark}

 Now, we will compute the matrix-valued symbols for operators defined by this complex functional calculus.
\begin{lemma}\label{LemmaFC}
Let $a\in S^{m}_{\rho,\delta}(G\times \widehat{G})$ be a parameter elliptic symbol  of order $m>0$ with respect to the sector $\Lambda\subset\mathbb{C}.$ Let $F(A):C^\infty(G)\rightarrow \mathscr{D}'(G)$ be the operator defined by the analytical functional calculus as in \eqref{F(A)}. Under the assumptions $\textnormal{(A1)}$, $\textnormal{(A2)}$, and $\textnormal{(A3)}$ of Remark \ref{Global:complex:calculus}, the matrix-valued symbol of $F(A),$ $\sigma_{F(A)}(x,\xi)$ is given by,
\begin{equation*}
    \sigma_{F(A)}(x,\xi)=-\frac{1}{2\pi i}\oint\limits_{\partial \Lambda_\varepsilon}F(z)\widehat{\mathcal{R}}_z(x,\xi)dz,
\end{equation*}where $\mathcal{R}_z=(A-zI)^{-1}$ denotes the resolvent of $A,$ and $\widehat{\mathcal{R}}_z\in S^{-m}_{\rho,\delta}(G\times \widehat{G}) $ is its symbol.
\end{lemma}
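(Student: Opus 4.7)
The plan is to use the characterisation of the matrix-valued symbol given in Theorem \ref{The:quantisation:thm}, namely $\sigma_{B}(x,\xi)=\xi(x)^{*}B\xi(x)$ for any continuous linear operator $B:C^\infty(G)\rightarrow C^\infty(G)$, and then to commute this operation with the contour integral defining $F(A)$.

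First, I would fix $(x,[\xi])\in G\times \widehat{G}$ and apply the identity
\[
 \sigma_{F(A)}(x,\xi)=\xi(x)^{*}\, F(A)\,\xi(x),
\]
where $F(A)\xi(x)$ is understood entry-wise on the matrix coefficients $\xi_{ij}$. By assumption (A3) together with Corollary \ref{resolv}, for each $\lambda\in\partial\Lambda_\varepsilon$ the resolvent $\mathcal{R}_\lambda=(A-\lambda I)^{-1}$ is a pseudo-differential operator in $\Psi^{-m}_{\rho,\delta}(G\times \widehat{G})$, and the map $\lambda\mapsto \mathcal{R}_\lambda\psi$ is continuous (in fact holomorphic off $\Lambda_\varepsilon$) into $L^{2}(G)$, and even into $C^\infty(G)$, for any fixed $\psi\in C^\infty(G)$. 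The decay of $F$ along $\partial\Lambda_\varepsilon$ imposed by (A3) ensures that the contour integral converges absolutely in these topologies.

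Next, I would commute the application of $\xi(x)^{*}(\cdot)\xi(x)$ with the integral. Since $\xi(x)$ is a fixed matrix of smooth functions on $G$ and the assignment $\psi\mapsto \xi(x)^{*}(B\xi(x))(x)$ is a continuous linear functional of $B\in\mathscr{B}(L^{2}(G))\cap \mathscr{B}(C^\infty(G))$, the standard argument based on approximation by Riemann sums over $\partial\Lambda_\varepsilon$ (combined with the absolute convergence just mentioned) yields
\[
 \xi(x)^{*}F(A)\xi(x)=-\frac{1}{2\pi i}\oint\limits_{\partial\Lambda_\varepsilon}F(z)\,\xi(x)^{*}\mathcal{R}_{z}\xi(x)\,dz.
\]
Applying Theorem \ref{The:quantisation:thm} again, this time to $\mathcal{R}_z$, I identify $\xi(x)^{*}\mathcal{R}_z\xi(x)=\sigma_{\mathcal{R}_z}(x,\xi)=\widehat{\mathcal{R}}_z(x,\xi)$, which gives the desired formula.

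The main obstacle is the rigorous justification of the interchange of the contour integral with the operation $B\mapsto \xi(x)^{*}B\xi(x)$; this is where assumption (A3) is essential. Concretely, one has to verify that the integrand $F(z)\,\mathcal{R}_z\xi(x)$ is a Bochner-integrable $C^\infty(G)$-valued function of $z\in\partial\Lambda_\varepsilon$, which follows from the uniform symbol bounds for $\widehat{\mathcal{R}}_{z}$ provided by Theorem \ref{parameterparametrix} together with the prescribed decay of $F$. Once that integrability is in place, the continuity of evaluation at $x$ and of left/right multiplication by the fixed matrices $\xi(x)^{*}$ and $\xi(x)$ lets the symbol map pass through the integral, completing the proof.
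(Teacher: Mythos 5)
The paper itself does not contain a proof of Lemma \ref{LemmaFC}; it simply refers the reader to \cite[Section 7.2]{CardonaRuzhanskyC}. Your proposal is nonetheless correct and uses the natural route: take the characterisation $\sigma_{B}(x,\xi)=\xi(x)^{*}B\xi(x)$ from Theorem \ref{The:quantisation:thm}, pass this fixed-$(x,[\xi])$ linear functional through the Bochner/Dunford--Riesz contour integral defining $F(A)$, and then recognise $\xi(x)^{*}\mathcal{R}_z\xi(x)=\widehat{\mathcal{R}}_z(x,\xi)$.

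Two small points worth tightening. First, the formula $\sigma_{F(A)}(x,\xi)=\xi(x)^{*}F(A)\xi(x)$ presupposes that $F(A)\xi_{ij}$ can be evaluated at $x$, whereas the hypothesis only gives $F(A):C^\infty(G)\rightarrow\mathscr{D}'(G)$; you do handle this by arguing Bochner convergence of $\oint F(z)\mathcal{R}_z\xi_{ij}\,dz$ in $C^\infty(G)$, but this step should be stated as the first thing one verifies (it is what in fact upgrades $F(A)$ to a map $C^\infty\to C^\infty$ so that Theorem \ref{The:quantisation:thm} applies at all). Second, the uniform-in-$z$ estimates you invoke come from Theorem \ref{parameterparametrix}, which controls the \emph{parametrix} symbol $a^{-\#}(x,\theta,z)$; one must combine it with Corollary \ref{resolv} (and the fact that parametrix and exact resolvent differ by a smoothing term with similar parameter control) to get bounds on the actual resolvent symbol $\widehat{\mathcal{R}}_z$ sufficient for the integrability argument. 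Also, the phrase ``the assignment $\psi\mapsto\xi(x)^{*}(B\xi(x))(x)$'' is garbled; you mean the map $B\mapsto\xi(x)^{*}(B\xi)(x)$ for fixed $(x,[\xi])$. None of these affect the substance of the argument.
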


The decay assumption on $F$ will be clarified in the following theorem saying that the global calculus of pseudo-differential operators is stable under the action of the global complex functional calculus.
\begin{theorem}\label{DunforRiesz}
Let $m>0,$ and let $0\leqslant \delta<\rho\leqslant 1.$ Let  $a\in S^{m}_{\rho,\delta}(G\times \widehat{G})$ be a parameter elliptic symbol with respect to $\Lambda.$ Let us assume that $F$ satisfies the  estimate $|F(\lambda)|\leqslant C|\lambda|^s$ uniformly in $\lambda,$ for some $s<0.$  Then  the symbol of $F(A),$  $\sigma_{F(A)}\in S^{ms}_{\rho,\delta}(G\times \widehat{G}) $ admits an asymptotic expansion of the form
\begin{equation}\label{asymcomplex}
    \sigma_{F(A)}(x,\xi)\sim 
     \sum_{N=0}^\infty\sigma_{{B}_{N}}(x,\xi),\,\,\,(x,[\xi])\in G\times \widehat{G},
\end{equation}where $\sigma_{{B}_{N}}\in {S}^{ms-(\rho-\delta)N}_{\rho,\delta}(G\times \widehat{G})$ and 
\begin{equation*}
    \sigma_{{B}_{0}}(x,\xi)=-\frac{1}{2\pi i}\oint\limits_{\partial \Lambda_\varepsilon}F(z)(a(x,\xi)-z)^{-1}dz\in {S}^{ms}_{\rho,\delta}(G\times \widehat{G}).
\end{equation*}Moreover, 
\begin{equation*}
     \sigma_{F(A)}(x,\xi)\equiv -\frac{1}{2\pi i}\oint\limits_{\partial \Lambda_\varepsilon}F(z)a^{-\#}(x,\xi,\lambda)dz \textnormal{  mod  } {S}^{-\infty}(G\times \widehat{G}),
\end{equation*}where $a^{-\#}(x,\xi,\lambda)$ is the symbol of the parametrix to $A-\lambda I,$   in Corollary \ref{parameterparametrix}.
\end{theorem}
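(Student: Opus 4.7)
The plan is to start from the Dunford--Riesz representation \eqref{F(A)} and invoke Lemma \ref{LemmaFC} to express the matrix-valued symbol as
\begin{equation*}
\sigma_{F(A)}(x,\xi) = -\frac{1}{2\pi i}\oint_{\partial \Lambda_\varepsilon} F(z)\widehat{\mathcal{R}}_z(x,\xi)\,dz.
\end{equation*}
The first reduction replaces the true resolvent symbol $\widehat{\mathcal{R}}_z$ by the parameter-dependent parametrix $a^{-\#}(x,\xi,z)$ supplied by Theorem \ref{parameterparametrix}; their difference is smoothing uniformly for $z\in\Lambda$, so after integrating against $F(z)$ it contributes to $S^{-\infty}(G\times\widehat{G})$, establishing the final congruence of the theorem.

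Next, I would use the standard recursive construction to expand the parametrix as $a^{-\#}(x,\xi,\lambda) \sim \sum_{N=0}^\infty c_N(x,\xi,\lambda)$, with $c_0(x,\xi,\lambda)=(a(x,\xi)-\lambda I_{d_\xi})^{-1}$ and the $c_N$ for $N\geq 1$ obtained by forcing $\textnormal{Op}(a^{-\#})\circ(A-\lambda I) \equiv I$ modulo successively lower orders through the asymptotic Leibniz formula of \cite{Ruz}. The parameter-dependent estimate from Theorem \ref{parameterparametrix}, together with the $(\rho-\delta)$-gain per recursion step, then gives
\begin{equation*}
\Vert\partial_\lambda^{k} X_x^{\beta}\mathbb{D}^{\alpha}c_N(x,\xi,\lambda)\Vert_{\textnormal{op}} \lesssim \langle\xi\rangle^{-\rho|\alpha|+\delta|\beta|}\bigl(|\lambda|^{1/m}+\langle\xi\rangle\bigr)^{-m(k+1)-N(\rho-\delta)}
\end{equation*}
uniformly for $\lambda\in\Lambda$. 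Define $\sigma_{B_N}(x,\xi):=-\frac{1}{2\pi i}\oint_{\partial\Lambda_\varepsilon} F(z)c_N(x,\xi,z)\,dz$; the leading term $\sigma_{B_0}$ is then exactly the stated integral.

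To establish $\sigma_{B_N}\in S^{ms-N(\rho-\delta)}_{\rho,\delta}(G\times\widehat{G})$, I would differentiate under the integral, apply the preceding bound and $|F(z)|\leq C|z|^s$, and reduce matters to estimating $\int_{\partial\Lambda_\varepsilon} |z|^{s}(|z|^{1/m}+\langle\xi\rangle)^{-m-N(\rho-\delta)}|dz|$. The rescaling $z=\langle\xi\rangle^m w$ turns this into a Beta-type integral in $w$: convergence at infinity follows from $s<0$, convergence near the origin is automatic because the contour stays at distance $\geq\varepsilon$ from $0$, and the scaling produces precisely the weight $\langle\xi\rangle^{ms-N(\rho-\delta)}$. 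The main obstacle is twofold. First, the asymptotic sum must be genuinely summed by a Borel-type resummation inside $S^{ms}_{\rho,\delta}(G\times\widehat{G})$, which requires showing that the tails $a^{-\#}-\sum_{N<K}c_N$ satisfy the parametrix estimate with $N$ replaced by $K$ uniformly in $\lambda\in\partial\Lambda_\varepsilon$, so that term-by-term integration is justified. Second, one must commute the matrix-valued difference operators $\mathbb{D}^\alpha$ with both the resolvent inversion and the contour integration, which is handled via the symbolic calculus of Theorem \ref{RTcalculus:Group}.
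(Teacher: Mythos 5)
The paper does not prove Theorem~\ref{DunforRiesz}: it is quoted from \cite{RuzhanskyWirth2014}, so what follows is a comparison of your proposal against the standard line of argument underlying that reference. Your outline is the right one up to one real technical gap, which lies in the estimate of the contour integral. You propose to bound
$\int_{\partial\Lambda_\varepsilon}|z|^{s}\bigl(|z|^{1/m}+\langle\xi\rangle\bigr)^{-m-N(\rho-\delta)}\,|dz|$
by the rescaling $z=\langle\xi\rangle^{m}w$, claiming "convergence near the origin is automatic because the contour stays at distance $\geq\varepsilon$ from $0$." But the substitution maps the fixed contour $\partial\Lambda_\varepsilon$ into the $\xi$-dependent contour $\langle\xi\rangle^{-m}\partial\Lambda_\varepsilon$, whose distance from $w=0$ is $\varepsilon\langle\xi\rangle^{-m}$, shrinking to zero as $\langle\xi\rangle\to\infty$. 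The rescaled factor $\int_{\langle\xi\rangle^{-m}\partial\Lambda_\varepsilon}|w|^{s}(|w|^{1/m}+1)^{-m-N(\rho-\delta)}|dw|$ is therefore uniformly bounded only when $s>-1$; for $s\leq-1$ the arc and the inner portion of the rays contribute $\asymp(\varepsilon\langle\xi\rangle^{-m})^{s+1}$, which reassembles to $\langle\xi\rangle^{-m-N(\rho-\delta)}$, strictly worse than the target $\langle\xi\rangle^{ms-N(\rho-\delta)}$. Since the theorem is meant to cover $F(\lambda)=\lambda^{z}$ with $\textnormal{Re}(z)\leq-1$ (this is precisely what feeds Corollaries~\ref{Complex:Powers:Th} and~\ref{The:Inverse:Theorem}), this is not a harmless restriction.

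The missing step is a contour deformation prior to estimating. For fixed $(x,\xi)$ the parameter-ellipticity forces $\textnormal{Spec}(a(x,\xi))$ to lie at distance $\gtrsim\langle\xi\rangle^{m}$ from the origin and from $\Lambda$, while $F$ is holomorphic off $\Lambda_\varepsilon$ and $c_{N}(x,\xi,\cdot)$ is holomorphic off the spectrum; so Cauchy's theorem lets you replace $\partial\Lambda_\varepsilon$ by $\partial\Lambda_{R(\xi)}$ with $R(\xi)=c_{0}\langle\xi\rangle^{m}$ (enlarging the disk, keeping the rays). On the new contour $|z|\gtrsim\langle\xi\rangle^{m}$ everywhere, so $|F(z)|\lesssim\langle\xi\rangle^{ms}$ pointwise and every factor $(a-z)^{-1}$ contributes $\langle\xi\rangle^{-m}$; the arc has length $\asymp\langle\xi\rangle^{m}$, and the ray integrals $\int_{\langle\xi\rangle^{m}}^{\infty}r^{s-1-N(\rho-\delta)/m}\,dr\asymp\langle\xi\rangle^{ms-N(\rho-\delta)}$ converge for all $s<0$. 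This gives the stated membership $\sigma_{B_{N}}\in S^{ms-N(\rho-\delta)}_{\rho,\delta}(G\times\widehat{G})$ for the full range $s<0$, and the same deformation also repairs the estimate for the smoothing error $\widehat{\mathcal{R}}_{z}-a^{-\#}(\cdot,\cdot,z)$, which needs not only to be smoothing for each $z$ but to have enough decay in $|z|$ for the contour integral over an unbounded path to converge. The remaining pieces of your proposal — replacing the resolvent symbol by the parametrix via Lemma~\ref{LemmaFC} and Theorem~\ref{parameterparametrix}, the recursive construction of $c_{N}$ via the Leibniz expansion of \cite{Ruz}, and the Borel summation of the asymptotic series — are all in line with the argument of \cite{RuzhanskyWirth2014}.
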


Now, we present the construction of the complex powers $A^z,$ $z\in \mathbb{C}.$
\begin{corollary}[Complex powers of elliptic operators on compact Lie groups]\label{Complex:Powers:Th}
Let $\varepsilon_>0.$ Let $A\in \Psi^m_{\rho,\delta}(G\times \widehat{G})$ be a parameter-elliptic pseudo-differential operator of order $m>0$ with respect to a  sector $\Lambda$ of the complex plane $\mathbb{C}.$ Let $\Lambda_{\varepsilon}:=\Lambda\cup \{z:|z|\leqslant \varepsilon\},$ $\varepsilon>0,$ and assume that $\Gamma=\partial \Lambda_\varepsilon\subset\textnormal{Resolv}(A)$ is a positively oriented curve in the complex plane $\mathbb{C}$. Then, the mapping
\begin{equation}\label{Contour:Integral}
    z\in \mathbb{C}\mapsto A^z:=-\frac{1}{2\pi i}\int\limits_{\partial \Lambda_\varepsilon}\lambda^z (A-\lambda I)^{-1}d\lambda\in \Psi^{\textnormal{Re}(z)m}_{\rho,\delta}(G).
\end{equation}
\end{corollary}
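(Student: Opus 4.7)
The plan is to apply Theorem~\ref{DunforRiesz} to the holomorphic function $F(\lambda)=\lambda^{z}$, where the principal branch of $\lambda^{z}$ is taken to be holomorphic on $\mathbb{C}\setminus \Lambda_{\varepsilon}$. The decay hypothesis $|F(\lambda)|\leq C|\lambda|^{s}$ with $s<0$ demanded by Theorem~\ref{DunforRiesz} reads $|F(\lambda)|=|\lambda|^{\mathrm{Re}(z)}$, so it is verified exactly when $\mathrm{Re}(z)<0$. Combined with the resolvent estimate $\Vert (A-\lambda I)^{-1}\Vert_{\mathrm{op}}\lesssim (1+|\lambda|)^{-1}$ coming from Theorem~\ref{parameterparametrix} along $\partial \Lambda_{\varepsilon}$, the contour integral in~\eqref{Contour:Integral} converges absolutely in $\mathscr{B}(L^{2}(G))$ on the half-plane $\mathrm{Re}(z)<0$, and Theorem~\ref{DunforRiesz} places the resulting operator $A^{z}$ in $\Psi^{m\,\mathrm{Re}(z)}_{\rho,\delta}(G\times \widehat{G})$, together with the asymptotic expansion of Lemma~\ref{LemmaFC}.

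For an arbitrary $z\in \mathbb{C}$ the integrand $\lambda^{z}(A-\lambda I)^{-1}$ is not absolutely integrable along $\partial \Lambda_{\varepsilon}$, so the construction must be regularised. My proposal is to fix any integer $k\in \mathbb{N}_{0}$ with $k>\mathrm{Re}(z)$ and to set $A^{z}:=A^{k}\circ A^{z-k}$. The factor $A^{z-k}$ is defined by the previous paragraph since $\mathrm{Re}(z-k)<0$, while $A^{k}$ is the classical $k$-fold composition, which lies in $\Psi^{mk}_{\rho,\delta}(G\times \widehat{G})$ by iterated application of Theorem~\ref{RTcalculus:Group}. The composition stability of the same theorem then yields $A^{z}\in \Psi^{mk+m(\mathrm{Re}(z)-k)}_{\rho,\delta}(G\times \widehat{G})=\Psi^{m\,\mathrm{Re}(z)}_{\rho,\delta}(G\times \widehat{G})$, which is the conclusion sought (and reduces to the statement in the corollary under the equivalence of the matrix-valued and local classes whenever $\rho\geq 1-\delta$).

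The principal obstacle lies in reconciling this regularised definition with the contour-integral formula in~\eqref{Contour:Integral}, and in checking that it is independent of the auxiliary integer $k$. Independence of $k$ is equivalent to the semigroup identity $A^{z_{1}+z_{2}}=A^{z_{1}}A^{z_{2}}$ inside the Dunford-Riesz calculus; I would prove this by writing the composition of the two contour integrals as a double integral over nested contours $\Gamma_{1}, \Gamma_{2}\subset \mathrm{Resolv}(A)$, applying the first resolvent identity $\mathcal{R}_{\lambda}\mathcal{R}_{\mu}=(\mathcal{R}_{\mu}-\mathcal{R}_{\lambda})/(\lambda-\mu)$, and invoking Cauchy's theorem to evaluate the interior contour. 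Agreement with~\eqref{Contour:Integral} for $\mathrm{Re}(z)\geq 0$ is then obtained by iterated integration by parts along $\partial \Lambda_{\varepsilon}$ using the identity $\tfrac{d}{d\lambda}\mathcal{R}_{\lambda}=\mathcal{R}_{\lambda}^{2}$: after $k$ steps the integrand $\lambda^{z}\mathcal{R}_{\lambda}$ is converted into a scalar multiple of $\lambda^{z+k}\mathcal{R}_{\lambda}^{k+1}$, whose integral converges absolutely and equals $A^{k}A^{z-k}$ by the case already treated. The only delicate bookkeeping is to ensure that every contour manipulation takes place inside $\mathrm{Resolv}(A)$ and that the boundary contributions at infinity vanish; once this is done, the membership $A^{z}\in \Psi^{m\,\mathrm{Re}(z)}_{\rho,\delta}(G\times \widehat{G})$ is already secured by the composition argument, and no further symbol analysis is required.
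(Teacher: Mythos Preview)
Your approach is correct and is precisely the route the paper has in mind: the corollary is stated without proof as an ``immediate consequence'' of Theorem~\ref{DunforRiesz} (the detailed argument being deferred to \cite{RuzhanskyWirth2014}), and you supply exactly that derivation---apply Theorem~\ref{DunforRiesz} with $F(\lambda)=\lambda^{z}$ when $\mathrm{Re}(z)<0$, then extend to all $z$ by the Seeley--Shubin regularisation $A^{z}:=A^{k}A^{z-k}$ together with the composition stability of Theorem~\ref{RTcalculus:Group}.

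One small caution: your integration-by-parts step, converting $\lambda^{z}\mathcal{R}_{\lambda}$ into a multiple of $\lambda^{z+k}\mathcal{R}_{\lambda}^{k+1}$, does not actually improve the absolute convergence of the contour integral in the $L^{2}$-operator norm, since $\Vert\lambda^{z+k}\mathcal{R}_{\lambda}^{k+1}\Vert_{\mathscr{B}(L^{2})}\asymp|\lambda|^{\mathrm{Re}(z)-1}$ just as for the original integrand. The usual way to reconcile the definition $A^{k}A^{z-k}$ with the formal contour integral~\eqref{Contour:Integral} is instead to expand $A^{k}\mathcal{R}_{\lambda}=\sum_{j=0}^{k-1}\lambda^{j}A^{k-1-j}+\lambda^{k}\mathcal{R}_{\lambda}$ inside the (convergent) integral for $A^{z-k}$ and observe that the polynomial terms integrate to zero along $\partial\Lambda_{\varepsilon}$. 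This is a cosmetic point, however: as you note yourself, the membership $A^{z}\in\Psi^{m\,\mathrm{Re}(z)}_{\rho,\delta}(G\times\widehat{G})$ is already secured by the composition argument, and no further justification of the contour formula is needed for the corollary as used later in the paper.
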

An immediate consequence of Theorem \ref{Complex:Powers:Th} is the construction of inverses for positive pseudo-differential  operators. We record it in the following way.
\begin{corollary}[Inverse of positive pseudo-differential operators]\label{The:Inverse:Theorem} Let Let $A\in \Psi^m_{\rho,\delta}(G\times\widehat{G})$ be a positive elliptic pseudo-differential operator of order  $m>0.$ Define $A^z$ via the contour integral \eqref{Contour:Integral} where $\Lambda$ is an acute angle  centred at the origin $0,$ with its interior containing the interval $(-\infty,0].$ Let $E_0:=\textnormal{Ker}(A)$ and let $E_0'$ be its orthogonal complement in $L^2(G).$ Then,
\begin{itemize}
    \item $A^z (E_0)=\{0\},$ and $A^z(E_0')\subset E_0'.$ 
    \item For any $z,w\in \mathbb{C},$  $A^{z+w}=A^zA^w.$ 
    \item If $P_0: L^2(G)\rightarrow E_0$ is the orthogonal projection on $E_0,$ then $A^0=I-P_0,$ and $A^{-1}$ is the inverse of the operator $A$ restricted to $E_0'.$  
\end{itemize}

\end{corollary}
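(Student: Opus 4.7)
The plan is to reduce the three statements to the spectral decomposition of $A$ and to determine the scalar action of the contour integral \eqref{Contour:Integral} on each element of an eigenbasis. Positivity and ellipticity of $A$ on the compact manifold $G$ imply by Theorem~\ref{calculus} that $A$ is self-adjoint with purely discrete spectrum $\{\mu_j\}_{j\ge 0}\subset[0,\infty)$, $\mu_j\to\infty$, admits an $L^2$-orthonormal eigenbasis $\{e_j\}$, and has a finite-dimensional kernel $E_0=\mathrm{Ker}(A)$; in particular the nonzero spectrum is bounded away from $0$. Both $E_0$ and $E_0'$ are $A$-invariant, hence invariant under the resolvent $(A-\lambda I)^{-1}$ for every $\lambda\in\mathrm{Resolv}(A)\supset\partial\Lambda_\varepsilon$.

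Since $(A-\lambda I)^{-1}e_j=(\mu_j-\lambda)^{-1}e_j$, the operator contour integral collapses on each eigenvector to the scalar
\begin{equation*}
 A^z e_j=c_j(z)\,e_j,\qquad c_j(z):=-\frac{1}{2\pi i}\int_{\partial\Lambda_\varepsilon}\lambda^{z}(\mu_j-\lambda)^{-1}\,d\lambda.
\end{equation*}
In the range $\mathrm{Re}(z)<0$, where parameter-ellipticity and the estimate $\|(A-\lambda I)^{-1}\|_{\mathrm{op}}\lesssim(1+|\lambda|)^{-1}$ make the integral absolutely convergent, I would evaluate $c_j(z)$ by residue calculus. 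For $\mu_j>0$ the pole $\lambda=\mu_j$ lies on the positive real axis in the exterior of $\Lambda_\varepsilon$; closing $\partial\Lambda_\varepsilon$ at infinity by a large circle (whose contribution vanishes as $R\to\infty$) and applying Cauchy's theorem to the principal branch of $\lambda^z$ with its cut along $(-\infty,0]\subset\mathrm{int}(\Lambda_\varepsilon)$ produces $c_j(z)=\mu_j^z$. For $\mu_j=0$ the integrand becomes $-\lambda^{z-1}$, and a direct parameterisation of the keyhole by the two rays $\arg\lambda=\pm(\pi-\alpha)$ and the $\varepsilon$-arc yields
\begin{equation*}
 \int_{\partial\Lambda_\varepsilon}\lambda^{z-1}\,d\lambda=\bigl(e^{i(\pi-\alpha)z}-e^{-i(\pi-\alpha)z}\bigr)\Bigl(\int_\varepsilon^\infty r^{z-1}\,dr+\frac{\varepsilon^z}{z}\Bigr)=0,
\end{equation*}
because $\int_\varepsilon^\infty r^{z-1}\,dr=-\varepsilon^z/z$ when $\mathrm{Re}(z)<0$, so $c_j(z)=0$ on $E_0$. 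The resulting diagonal description $A^z e_j=\mu_j^z\, e_j$ when $\mu_j>0$ and $A^z e_j=0$ when $\mu_j=0$ is then extended to arbitrary $z\in\mathbb{C}$ by setting $A^z:=A^k\cdot A^{z-k}$ for any integer $k>\mathrm{Re}(z)$.

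The three assertions follow immediately from this diagonal formula. For (i), the description respects the split $L^2(G)=E_0\oplus E_0'$, so $A^z(E_0)=\{0\}$ and $A^z(E_0')\subset E_0'$. For (ii), eigenvector-by-eigenvector, $A^{z+w}e_j=\mu_j^{z+w}e_j\,\mathbf{1}_{\{\mu_j>0\}}=\mu_j^z\mu_j^w\, e_j\,\mathbf{1}_{\{\mu_j>0\}}=A^zA^w e_j$. For (iii), $A^0 e_j=e_j$ for $\mu_j>0$ and $A^0 e_j=0$ for $\mu_j=0$, which is precisely $(I-P_0)e_j$; together with $A\cdot A^{-1}=A^{-1}\cdot A=A^0=I-P_0$ coming from (ii), this exhibits $A^{-1}$ as the two-sided inverse of $A|_{E_0'}\colon E_0'\to E_0'$, extended by zero on $E_0$.

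I expect the key obstacle to be the contour computation on $E_0$: one must carefully place the branch cut of $\lambda^z$ inside $\Lambda_\varepsilon$ so that $\lambda^{z-1}$ is single-valued on the keyhole, track the orientation of $\partial\Lambda_\varepsilon$ consistently with the Dunford--Riesz convention used for $c_j(z)=\mu_j^z$, and verify the algebraic collapse between the two ray integrals and the small-arc contribution. Everything else is a routine consequence of the spectral calculus combined with the ellipticity-based fact that the nonzero part of $\mathrm{Spec}(A)$ is bounded away from zero, which ensures that $A|_{E_0'}$ is boundedly invertible on $E_0'$.
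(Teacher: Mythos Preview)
Your argument is correct and is precisely the spectral-theoretic verification that underlies the claim. Note, however, that the paper does not actually prove this corollary: it is stated as an ``immediate consequence'' of Corollary~\ref{Complex:Powers:Th} and the global functional calculus of \cite{RuzhanskyWirth2014}, with no further details. What you have written is therefore not a comparison against an existing proof but rather a fleshing-out of that immediate consequence, and it does so in the standard way (diagonalising $A$ on an orthonormal eigenbasis, reducing the Dunford--Riesz integral to a scalar residue computation, and extending from $\mathrm{Re}(z)<0$ to all of $\mathbb{C}$ via $A^z=A^kA^{z-k}$).

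Two small remarks. First, your appeal to Theorem~\ref{calculus} is slightly off: that theorem records the calculus properties and the Calder\'on--Vaillancourt $L^2$-boundedness, not the discreteness of the spectrum. The fact you need---that a positive elliptic operator of positive order on a compact manifold has compact resolvent and hence purely discrete spectrum accumulating only at $+\infty$---is standard (see e.g.\ \cite{Shubin1987}) and, in the present global setting $\Psi^m_{\rho,\delta}(G\times\widehat{G})$ with possibly $\rho<1-\delta$, it follows from the parametrix construction in Theorem~\ref{parameterparametrix} and Corollary~\ref{resolv}. Second, your contour computation for $\mu_j=0$ is correct but the orientation and branch conventions should be pinned down once and used consistently with the $\mu_j>0$ case; you have already flagged this, and the cancellation $\int_\varepsilon^\infty r^{z-1}\,dr+\varepsilon^z/z=0$ for $\mathrm{Re}(z)<0$ is indeed the mechanism that forces $A^z|_{E_0}=0$.
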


\subsection{Global and local classes of pseudo-differential operators on the torus}\label{periodicclasses}
Let us consider the torus $\mathbb{T}^n\cong \mathbb{R}^n/\mathbb{Z}^n,$  $\mathbb{T}\cong \mathbb{S}^1.$
Different from the case of an arbitrary compact Lie group, here the local and the global H\"ormander classes agree for all $(\rho,\delta)\in [0,1]^2$ such that $0\leq \delta \leq 1,$ $0< \rho\leq 1.$ We will present the required preliminaries in order to give the statement of this equivalence. 

We will use the standard notation for this family of periodic pseudo-differential operators taken from \cite{Ruz}.
\begin{definition}[Discrete Schwartz space] The Schwartz space $\mathcal{S}(\mathbb{Z}^n)$ on the lattice $\mathbb{Z}^n$ is defined by the discrete functions $\phi:\mathbb{Z}^n\rightarrow \mathbb{C}$ verifying the inequality
 \begin{equation}
 \forall M\in\mathbb{R}, \exists C_M>0,\, |\phi(\xi)|\leq C_M(1+ |\xi|)^M.
 \end{equation}    
\end{definition}
\begin{definition}[The Fourier transform on $\mathbb{T}^n$]
  The toroidal  Fourier transform is defined for any test function $f\in C^{\infty}(\mathbb{T}^n)$ by
$$ \widehat{f}(\xi):=\int\limits_{\mathbb{T}^n}e^{-i2\pi\langle x,\xi\rangle}f(x)dx,\,\,\xi\in\mathbb{Z}^n.$$ Here, $dx$ stands for the normalised Haar measure on the torus.  
\end{definition}
\begin{remark}[The Fourier inversion formula]The Fourier inversion formula is given by the representation of any function $f\in L^1(\mathbb{T}^n)$ in its Fourier series $$ f(x)=\sum_{\xi\in \mathbb{Z}^n}e^{i2\pi\langle x,\xi \rangle }\widehat{f}(\xi),\,\, x\in\mathbb{T}^n. $$ 
    
\end{remark}
\begin{definition}[H\"ormander classes on the torus]
    The toroidal  H\"ormander class $S^m_{\rho,\delta}(\mathbb{T}^n\times \mathbb{R}^n), \,\, 0\leq \rho,\delta\leq 1,$ are defined by  those functions $a=a(x,\xi)$ which are smooth in $(x,\xi)\in \mathbb{T}^n\times \mathbb{R}^n$ and which satisfy the inequalities
\begin{equation}
|\partial^{\beta}_{x}\partial^{\alpha}_{\xi}a(x,\xi)|\leq C_{\alpha,\beta}(1+|\xi|)^{m-\rho|\alpha|+\delta|\beta|}.
\end{equation}
\end{definition}
\begin{remark}
    Note that symbols in $S^m_{\rho,\delta}(\mathbb{T}^n\times \mathbb{R}^n)$ are symbols in $S^m_{\rho,\delta}(\mathbb{R}^n\times \mathbb{R}^n)$ (see \cite{Ruz}) of order $m$ which are 1-periodic in $x.$ Then, if $a\in S^{m}_{\rho,\delta}(\mathbb{T}^n\times \mathbb{R}^n),$ the corresponding pseudo-differential operator is defined by the quantisation formula
\begin{equation}
a(X,D_{x})f(x)=\int\limits_{\mathbb{T}^n}\int\limits_{\mathbb{R}^n}e^{i2\pi\langle x-y,\xi \rangle}a(x,\xi)f(y)d\xi dy.
\end{equation}
\end{remark}
\begin{definition}[H\"ormander classes on the torus II]
The class $S^m_{\rho,\delta}(\mathbb{T}^n\times \mathbb{Z}^n),\, 0\leq \rho,\delta\leq 1,$ consists of  those functions $a(x, \xi)$ which are smooth in $x\in \mathbb{T}^n,$  for all $\xi\in\mathbb{Z}^n$ and which satisfy the symbol inequalities

\begin{equation}
\forall \alpha,\beta\in\mathbb{N}^n,\exists C_{\alpha,\beta}>0,\,\, |\Delta^{\alpha}_{\xi}\partial^{\beta}_{x}a(x,\xi)|\leq C_{\alpha,\beta}(1+|\xi|)^{m-\rho|\alpha|+\delta|\beta|}.
 \end{equation}

The operator $\Delta$ is the standard difference operator defined in $\mathbb{Z}^n,$ \cite{Ruz}. In this case for any $\alpha\in \mathbb{N}_0,$ $ \Delta^\alpha=\mathbb{D}^\alpha.$  The toroidal operator with symbol $a$ is defined as
\begin{equation}
a(x,D)f(x)=\sum_{\xi\in\mathbb{Z}^n}e^{i 2\pi\langle x,\xi\rangle}a(x,\xi)\widehat{f}(\xi),\,\, f\in C^{\infty}(\mathbb{T}^n).
\end{equation}
\end{definition}

\begin{remark}We denote the corresponding toroidal  class of operators associated with toroidal symbols in $S^m_{\rho,\delta}(\mathbb{T}^n\times \mathbb{Z}^n)$ (resp. $S^m_{\rho,\delta}(\mathbb{T}^n\times \mathbb{R}^n)$) by $\Psi^m_{\rho,\delta}(\mathbb{T}^n\times \mathbb{Z}^n),$ (resp. $\Psi^m_{\rho,\delta}(\mathbb{T}^n\times \mathbb{R}^n)$).
\end{remark}
There exists a process allowing the interpolation of the second argument of the symbols defined on $\mathbb{T}^n\times \mathbb{Z}^n$ in a smooth way to get a smooth symbol defined on $\mathbb{T}^n\times \mathbb{R}^n.$ It leads to the following toroidal equivalence-of-classes-theorem.  
\begin{theorem}\label{eq}
Let $(\rho,\delta)\in [0,1]^2$ be such that $0\leq \delta \leq 1,$ $0< \rho\leq 1.$ Then the symbol $a\in S^m_{\rho,\delta}(\mathbb{T}^n\times \mathbb{Z}^n)$ if only if there exists  an Euclidean symbol $a'\in S^m_{\rho,\delta}(\mathbb{T}^n\times \mathbb{R}^n)$ such that $a=a'|_{\mathbb{T}^n\times \mathbb{Z}^n}.$ Moreover, we have 
\begin{equation}\label{equivalence:of:classes}
    \Psi^m_{\rho,\delta}(\mathbb{T}^n\times \mathbb{Z}^n)=\Psi^m_{\rho,\delta}(\mathbb{T}^n\times \mathbb{R}^n) .
\end{equation}
Moreover, any $A \in\Psi^0_{\rho,\delta}(\mathbb{T}^n\times \mathbb{Z}^n) $ is bounded on  $L^2(\mathbb{T}^n),$ and 
\begin{equation}\label{Delgado:Ruzhansky:lp}
    \Vert A\Vert_{\mathscr{B}(L^2)}\lesssim \sup_{|\alpha|+|\beta|\leq [n/2]+1}  \sup_{(x,\xi)}\langle \xi \rangle^{\rho|\alpha|-\delta|\beta|}|\Delta^{\alpha}_{\xi}\partial^{\beta}_{x}a(x,\xi)|.
\end{equation}
\end{theorem}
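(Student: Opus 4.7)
The plan is to prove the equivalence of symbol classes $S^m_{\rho,\delta}(\mathbb{T}^n\times\mathbb{Z}^n)\leftrightarrow S^m_{\rho,\delta}(\mathbb{T}^n\times\mathbb{R}^n)$ in two directions (restriction and extension), and then to deduce the operator identity \eqref{equivalence:of:classes} and the quantitative $L^2$-bound \eqref{Delgado:Ruzhansky:lp} from the symbol-level equivalence.

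For the forward (restriction) direction, given $a'\in S^m_{\rho,\delta}(\mathbb{T}^n\times\mathbb{R}^n)$, set $a:=a'|_{\mathbb{T}^n\times\mathbb{Z}^n}$. The $x$-derivative estimates pass trivially. For the difference operators we use the iterated fundamental theorem of calculus: $\Delta_{\xi_j}a(x,\xi)=\int_0^1\partial_{\theta_j}a'(x,\xi+te_j)\,dt$, so $\Delta^\alpha_\xi a(x,\xi)$ is an average of $\partial^\alpha_\theta a'(x,\theta)$ with $\theta$ in a bounded neighbourhood of $\xi$. Since $\langle\xi+t\rangle\asymp\langle\xi\rangle$ uniformly in $|t|\leq|\alpha|$, the Euclidean $(\rho,\delta)$-bound on $a'$ converts directly into the discrete bound required for $a$.

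The reverse (extension) direction is the crux. The plan is to fix a Schwartz function $\Phi\in\mathscr{S}(\mathbb{R}^n)$ with the cardinal interpolation property $\Phi(0)=1$ and $\Phi(\eta)=0$ for every $\eta\in\mathbb{Z}^n\setminus\{0\}$, obtained by prescribing $\widehat{\Phi}$ as a suitable smooth cutoff on a fundamental domain of the dual lattice. One then defines
$$
a'(x,\theta):=\sum_{\xi\in\mathbb{Z}^n}a(x,\xi)\,\Phi(\theta-\xi),\qquad (x,\theta)\in\mathbb{T}^n\times\mathbb{R}^n,
$$
so that $a'(x,\xi)=a(x,\xi)$ for all $\xi\in\mathbb{Z}^n$. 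The $x$-derivative estimates transfer term-by-term. For $\partial^\alpha_\theta a'$ one differentiates under the sum and splits $\mathbb{Z}^n$ into a bounded neighbourhood of $\theta$, where $\langle\xi\rangle\asymp\langle\theta\rangle$ and the crude bound $|a(x,\xi)|\lesssim\langle\xi\rangle^m$ is used together with the $\rho$-type decay extracted from the discrete differences of $a$, and its complement, where the Schwartz decay of $\partial^\alpha\Phi$ absorbs the polynomial growth of $a$ after a dyadic decomposition. Combining both regions yields the target bound $|\partial^\beta_x\partial^\alpha_\theta a'(x,\theta)|\lesssim\langle\theta\rangle^{m-\rho|\alpha|+\delta|\beta|}$, that is, $a'\in S^m_{\rho,\delta}(\mathbb{T}^n\times\mathbb{R}^n)$.

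Once the symbol equivalence is in hand, the operator identity \eqref{equivalence:of:classes} is immediate, since the discrete quantisation
$$
a(x,D)f(x)=\sum_{\xi\in\mathbb{Z}^n}e^{2\pi i\langle x,\xi\rangle}a(x,\xi)\widehat{f}(\xi)
$$
samples any symbol only at integer frequencies, so restriction and extension produce the same operator on $C^\infty(\mathbb{T}^n)$. For the $L^2$-continuity with the quantitative bound \eqref{Delgado:Ruzhansky:lp}, one applies the Euclidean Calder\'on--Vaillancourt theorem (as recalled in the remark following Theorem~\ref{calculus}) to the extended symbol $a'\in S^0_{\rho,\delta}(\mathbb{T}^n\times\mathbb{R}^n)$; the Euclidean semi-norms of $a'$ are controlled by finitely many discrete semi-norms of $a$ via the extension formula, producing the stated estimate in terms of $[n/2]+1$ difference/derivative orders. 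The main obstacle is the extension step: constructing $\Phi$ with both the Kronecker interpolation property and Schwartz decay, and then performing a tight dyadic bookkeeping so that no logarithmic losses appear as $\rho\to 0$; the Schwartz decay of $\Phi$ is precisely what is needed to close the summation against the polynomial weight $\langle\theta\rangle^{m-\rho|\alpha|+\delta|\beta|}$.
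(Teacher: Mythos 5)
The paper itself does not supply a proof here: it delegates \eqref{equivalence:of:classes} to \cite{Ruz} and \eqref{Delgado:Ruzhansky:lp} to \cite{DRLp}. Your blueprint nonetheless follows the strategy used in \cite{Ruz} (cardinal interpolation with a Schwartz kernel), so the overall route is the correct one. The restriction direction is fine: writing $\Delta_{\xi_j}a(x,\xi)=\int_0^1\partial_{\theta_j}a'(x,\xi+te_j)\,dt$ and iterating indeed expresses $\Delta^\alpha_\xi a$ as an average of $\partial^\alpha_\theta a'$ over a bounded set where $\langle\cdot\rangle$ is comparable to $\langle\xi\rangle$.

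The extension direction, which you correctly identify as the crux, has a genuine gap as written. After differentiating under the sum you propose to estimate $\sum_\xi a(x,\xi)\,\partial^\alpha\Phi(\theta-\xi)$ using the crude size bound $|a(x,\xi)|\lesssim\langle\xi\rangle^m$ on a bounded neighbourhood of $\theta$ together with the Schwartz decay of $\partial^\alpha\Phi$ at infinity. Those two ingredients see only the \emph{size} of $a$, not its discrete regularity, and so they can only deliver $\langle\theta\rangle^{m+\delta|\beta|}$ --- the factor $\langle\theta\rangle^{-\rho|\alpha|}$ cannot appear. The mechanism that produces it is a cancellation-plus-summation-by-parts step that your sketch never performs: one must use the identity $\sum_{\xi\in\mathbb{Z}^n}\partial^\alpha\Phi(\theta-\xi)\equiv 0$ for $|\alpha|\geq 1$ (which requires the stronger normalisation $\widehat\Phi(k)=\delta_{k,0}$ for $k\in\mathbb{Z}^n$, not merely the cardinal interpolation property $\Phi(\eta)=\delta_{\eta,0}$, though both can be arranged by taking $\widehat\Phi\in C_c^\infty((-1,1)^n)$ periodising to $1$). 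With this, one can subtract $a(x,\lfloor\theta\rceil)$ from every coefficient and telescope the resulting differences into $\Delta^\alpha_\xi a$, or equivalently apply discrete Abel summation $|\alpha|$ times; only then does the $(\rho,\delta)$-decay of the discrete differences enter, and the near/far split you describe closes the estimate with the correct weight. Your phrase ``the $\rho$-type decay extracted from the discrete differences of $a$'' points at the right endpoint, but the discrete integration by parts that actually extracts it is the content of the theorem and must appear explicitly. The downstream steps (the operator identity, since the quantisation only samples integer frequencies, and the $L^2$ bound via the Euclidean Calder\'on--Vaillancourt theorem applied to the extended symbol with seminorms controlled by those of $a$) are sound modulo this point, though obtaining exactly $[n/2]+1$ derivative orders in \eqref{Delgado:Ruzhansky:lp} for the full range $0\leq\delta\leq\rho$ requires the sharp form of the Euclidean estimate and a derivative-count bookkeeping that your extension must not lose; the cited \cite{DRLp} proves this directly in the discrete setting.
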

\begin{proof} The proof of  \eqref{equivalence:of:classes} can be found in \cite{Ruz}. The proof of the $L^2$-estimate in \eqref{Delgado:Ruzhansky:lp} can be found in \cite{DRLp}. 
\end{proof}

\subsection{Null-controllability of diffusion problems on Hilbert spaces}\label{Control:basics}
This section is dedicated to presenting the functional analysis related to the control theory of fractional problems for self-adjoint linear operators on  Hilbert spaces, we will follow Miller \cite{Miller2006}.  We use the following notation/fact: 
\begin{itemize}
    \item  the norm of a Hilbert space $H$ will be denoted by $\Vert\cdot \Vert$ without using  subscript. In general $H_{1},$ $H_2,$ etc. denote Hilbert spaces. In what follows, any Hilbert space will be identified with its topological dual in the canonical way. 
\end{itemize}
\begin{remark}[Observation operator and Control operator]
   Let $H$ be a separable Hilbert space and let $A:\textnormal{Dom}(A)\subset H\rightarrow H$ be a positive self-adjoint operator with dense domain $\textnormal{Dom}(A)\subset{H}.$  Consider $H_1$ the Hilbert space obtained by choosing on the domain  $\textnormal{Dom}(A)$ the graph norm. We extend $\{e^{-tA}:t>0\}$ to a semigroup on the dual space $H_{1}^{*}.$ Let $S$ be an observation operator from  $H$ to a Hilbert space of inputs $U,$ and let us consider  the control operator $B=S^{*}\in \mathscr{B}(U, H_1^*).$ 
\end{remark}
Assume the following properties on $S$ and $B.$  
   \begin{assumption}\label{Assumption}  Let $S\in \mathscr{B}(H,U)$ be an observation operator and let us consider its adjoint (the control operator)  $B=S^{*}\in \mathscr{B}(U, H_1^*).$  Assume that, for some $T>0$ (and hence, for any $T>0$), the following estimates hold.
\begin{itemize}
    \item There exists $K_T>0,$ such that
    \begin{equation}\label{Hip:1}
        \forall v_0\in \textnormal{Dom}(A),\, \, \int\limits_0^T\Vert Se^{-tA}v_0\Vert^2\leq K_T\Vert v_0 \Vert^2.
    \end{equation}
    \item We have
    \begin{equation}\label{Hip:2}
       \forall u\in L^2_\textnormal{loc}(\mathbb{R},U),\,\Vert \int\limits_0^Te^{-tA} Bu(t)\Vert^2dt\leq K_T\int\limits_0^T\Vert u(t)\Vert^2dt. 
    \end{equation}
\end{itemize}
\end{assumption}
The assumptions  \eqref{Hip:1} and \eqref{Hip:2}identify the necessary hypotheses for the existence and  uniqueness of the solution of the model
\begin{equation}\label{Model:Hilbert}
    \phi_t+A\phi=Bu,\phi(0)=\phi_0\in {H},\, u\in L^2_{\textnormal{loc}}(\mathbb{R},U).
\end{equation}
We summarise this in the following result.
\begin{proposition} Under the hypothesis \eqref{Hip:1} and \eqref{Hip:2}, for any input $u\in L^2_{\textnormal{loc}}(\mathbb{R},U),$ there exists a unique solution $u\in C(\mathbb{R}^+_0,U)$ to \eqref{Model:Hilbert} such that
\begin{equation}\label{Solution:Hilbert}
    \phi(t)=e^{-tA}\phi_0+\int\limits_0^T e^{(s-t)}Bu(s)ds.
\end{equation}
\end{proposition}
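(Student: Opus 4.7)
The plan is to verify that the Duhamel formula \eqref{Solution:Hilbert} defines the unique $C(\mathbb{R}_0^+,H)$-valued mild solution of the Cauchy problem \eqref{Model:Hilbert}, with the two admissibility conditions \eqref{Hip:1}--\eqref{Hip:2} furnishing exactly the regularity needed to give meaning to each term on the right-hand side. Since $A$ is positive and self-adjoint with dense domain, $-A$ generates by the Hille--Yosida theorem (equivalently, by the spectral theorem) a strongly continuous contraction semigroup $\{e^{-tA}\}_{t\geq 0}$ on $H$. Consequently the homogeneous term $t\mapsto e^{-tA}\phi_0$ lies automatically in $C([0,\infty),H)$ and solves $\phi_t+A\phi=0$ with initial value $\phi_0$.

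The delicate point is the Duhamel integral. Because $B\in\mathscr{B}(U,H_1^*)$, the integrand $e^{-(t-s)A}Bu(s)$ lives a priori only in $H_1^*$, once the semigroup has been extended by duality to the rigging $H_1\subset H\subset H_1^*$. The content of hypothesis \eqref{Hip:2} is precisely that the input-to-state map
\[
L^{2}(0,T;U)\ni u\longmapsto \int_{0}^{T} e^{-(T-s)A}Bu(s)\,ds
\]
is in fact bounded into $H$, with norm controlled by $\sqrt{K_T}$. Replacing $T$ by $t\in(0,T]$ and exploiting the semigroup identity $e^{-(t-s)A}=e^{-(t-T)A}e^{-(T-s)A}$ shows that each partial integral $\int_{0}^{t}e^{-(t-s)A}Bu(s)\,ds$ is a well-defined element of $H$. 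Continuity of this map in $t$ is then established by first verifying it for smooth, compactly supported inputs (using strong continuity of $\{e^{-tA}\}$ and the identity above) and then extending by density, the uniform bound from \eqref{Hip:2} ensuring that the limit is continuous.

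Adding the two contributions yields $\phi\in C([0,T],H)$; the semigroup property allows one to patch across successive intervals to obtain $\phi\in C(\mathbb{R}_0^+,H)$. Uniqueness is immediate: the difference $\psi$ of two mild solutions with identical data is a mild solution of the homogeneous equation with zero initial datum, so $\psi(t)=e^{-tA}\cdot 0=0$ for every $t\geq 0$.

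The main (and essentially only) obstacle I anticipate is justifying that the Duhamel integral takes values in $H$ rather than merely in $H_{1}^{*}$: this is not automatic from $B\in\mathscr{B}(U,H_{1}^{*})$ alone, and it is precisely the role of the Weiss-type admissibility hypothesis \eqref{Hip:2} to guarantee it. The companion hypothesis \eqref{Hip:1} is the dual admissibility condition for the observation operator $S=B^{*}$; it plays no direct role in the present existence statement but is recorded here for use in the subsequent duality arguments underlying null-controllability.
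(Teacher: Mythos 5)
The paper does not actually prove this proposition: it is recalled, without argument, from the abstract admissibility/semigroup framework of Miller (the subsection heading explicitly says ``we will follow Miller''), and the statement as printed even contains typographical slips ($u\in C(\mathbb{R}^+_0,U)$ should read $\phi\in C(\mathbb{R}^+_0,H)$, and the kernel $e^{(s-t)}$ should be $e^{-(t-s)A}$). So there is no ``paper proof'' to compare against; one can only judge your sketch on its own merits.

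On those merits your argument is correct and is precisely the standard one underlying the cited result. You correctly identify the three ingredients: that positivity and self-adjointness of $A$ give a $C_0$-contraction semigroup $e^{-tA}$ on $H$ via the spectral theorem; that the a priori defect of the Duhamel term (it lands only in $H_1^*$ because $B\in\mathscr{B}(U,H_1^*)$) is repaired exactly by the admissibility hypothesis \eqref{Hip:2}, which bounds the input-to-state map into $H$; and that continuity in $t$ follows by a density argument from the uniform admissibility constant. You are also right that \eqref{Hip:1} is the dual (observation) admissibility condition and does not enter the existence statement itself. One small remark: the reduction from time $T$ to time $t\in(0,T]$ via $e^{-(t-s)A}=e^{-(t-T)A}e^{-(T-s)A}$ is not literally valid when $t<T$, since $e^{-(t-T)A}$ would require the semigroup at negative time; the cleaner route is to note that the admissibility estimate \eqref{Hip:2} holds for every $T>0$ (the paper itself says ``for some $T>0$, and hence for any $T>0$''), so one simply applies \eqref{Hip:2} directly at time $t$. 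With that adjustment the sketch is complete and matches what Miller's framework supplies.
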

We precise the notion of null-controllability in the following definition.
\begin{definition}\label{defi}The model \eqref{Model:Hilbert} is null-controllable in time $T>0,$ if for any initial state $\phi_0\in H,$ there exists an input function $u\in L^2_{\textnormal{loc}}(\mathbb{R}^+_0,H)$ such that its solution \eqref{Solution:Hilbert} satisfies $\phi(T)=0.$ 
\end{definition}
\begin{remark} Let us consider the adjoint model  to  \eqref{Model:Hilbert}  without the source term, that is
\begin{equation}
    v_t+Av=0.
\end{equation}
Since $B=S^{*}$ the null-controllability of \eqref{Model:Hilbert}  is equivalent to the following observability inequality: there exists $C_T>0,$ such that 
\begin{equation}
    \forall v_0\in H, \, \Vert e^{-TA}v_0\Vert\leq C_T \Vert S e^{-tA}v_0\Vert_{L^2((0,T),U)}.
\end{equation}
The smallest constant $C_T>0$ is called the cost of controllability in time $T>0$. Note that by the duality argument, the cost of controllability in time $T>0,$ is the smallest constant $C_T>0$ satisfying that
\begin{equation}
    \forall \phi_0\in H,\exists u \textnormal{ in Definition \eqref{defi} such that } \Vert u \Vert_{L^2((0,T),U)}\leq C_T\Vert \phi_0\Vert.
\end{equation}
\end{remark}    
\begin{remark}[Spectral inequalities and  null-controllability]
     Theorem \ref{Miller:Theorem} says that a spectral inequality for the power $A^\gamma$ which is defined by the functional calculus of the operator $A,$ is a sufficient condition for the null-controllability of the 
model \eqref{Model:Hilbert}.
Next, we give the precise statement. Here, $E_\lambda^{T}:=E^{T}(0,\lambda):H\rightarrow H$ denotes an arbitrary projection of the spectral measure $\{E_\lambda^{T}\}_{\lambda>0} $ associated to a positive and densely defined linear operator $T:H\rightarrow H.$
\end{remark}

\begin{theorem}[Miller \cite{Miller2006}, 2006]\label{Miller:Theorem} Assume that for some $\gamma\in (0,1),$ the fractional operator $A^\gamma$ satisfies the spectral inequality
\begin{equation}\label{Espectral:Inequality:Hilbert}
    \forall\lambda>0,\,\forall u\in E_\lambda^{A^{\gamma}} (H),\exists d_1,d_2>0,\,\,\, \Vert  v\Vert\leq d_1e^{d_2 \lambda}\Vert  Sv\Vert.
\end{equation}Then, the problem \eqref{Model:Hilbert} is null-controllable in time $T>0.$ Moreover,  the controllability cost $C_T$ over short times $T,$ satisfies the inequality
\begin{equation}
    \forall \beta>\frac{\gamma}{1-\gamma},\,\exists C_1,C_2,\, \forall T\in (0,1),\, C_T\leq C_1 e^{C_2T^{-\beta}}.
\end{equation}
\end{theorem}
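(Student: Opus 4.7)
\textbf{Reduction to observability.} First I would invoke the Hilbert Uniqueness Method, already anticipated in the Remark preceding the theorem, to recast null-controllability in time $T$ as a final-state observability inequality for the adjoint semigroup: the existence of $u$ in Definition \ref{defi} driving $\phi(T)=0$ at cost $\Vert u\Vert_{L^{2}}\leq C_{T}\Vert\phi_{0}\Vert$ is equivalent to
\begin{equation*}
\Vert e^{-TA}v_{0}\Vert^{2}\leq C_{T}^{2}\int_{0}^{T}\Vert Se^{-tA}v_{0}\Vert^{2}\,dt,\quad v_{0}\in H.
\end{equation*}
So the entire problem is to produce this integrated observability from the spectral inequality \eqref{Espectral:Inequality:Hilbert}, which only controls $\Vert v\Vert$ by $\Vert Sv\Vert$ on the spectral slab $E_{\lambda}^{A^{\gamma}}(H)$, and to track carefully the dependence of $C_{T}$ on $T$.

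\textbf{Key fractional dissipation.} The bridge between the spectral slab inequality and the time-integrated observability is the fact that $A$ and $A^{\gamma}$ share an orthonormal eigenbasis $\{e_{k}\}$, with eigenvalues $\mu_{k}$ and $\mu_{k}^{\gamma}$ respectively, so that $E_{\lambda}^{A^{\gamma}}$ is the orthogonal projector onto $\mathrm{span}\{e_{k}:\mu_{k}\leq\lambda^{1/\gamma}\}$. On its orthogonal complement I would exploit the super-exponential dissipation
\begin{equation*}
\Vert e^{-tA}(I-E_{\lambda}^{A^{\gamma}})v\Vert\leq e^{-t\lambda^{1/\gamma}}\Vert(I-E_{\lambda}^{A^{\gamma}})v\Vert,
\end{equation*}
whose rate $\lambda^{1/\gamma}$ is much larger than the spectral-inequality cost rate $\lambda$ coming from $d_{2}\lambda$ in \eqref{Espectral:Inequality:Hilbert}, precisely because $\gamma\in(0,1)$ forces $1/\gamma>1$. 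This mismatch is the whole engine of the proof.

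\textbf{Lebeau-Robbiano telescoping.} The main step is a dyadic iteration in the spirit of Lebeau and Robbiano \cite{LabeauRobbiano1995}. I would fix a geometric sequence of frequency cutoffs $\lambda_{j}=\rho^{j}$ with $\rho>1$ and split $[0,T]$ into successive pairs of windows of lengths $\tau_{j}$ (active) and $\tau_{j}'$ (passive), with $\sum_{j\geq 0}(\tau_{j}+\tau_{j}')=T$. On the $j$-th active window, the spectral inequality \eqref{Espectral:Inequality:Hilbert} combined with a standard HUM-type argument for the finite-dimensional range of $E_{\lambda_{j}}^{A^{\gamma}}$ yields an input $u_{j}$ annihilating the low-frequency part $E_{\lambda_{j}}^{A^{\gamma}}\phi$ at cost
\begin{equation*}
\Vert u_{j}\Vert_{L^{2}}^{2}\lesssim \tau_{j}^{-1}e^{2d_{2}\lambda_{j}}\,\Vert E_{\lambda_{j}}^{A^{\gamma}}\phi(t_{j})\Vert^{2};
\end{equation*}
on the $j$-th passive window I set $u\equiv 0$, and the high-frequency part of the state contracts by the factor $e^{-\tau_{j}'\lambda_{j}^{1/\gamma}}$.

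\textbf{Optimization and the sharp cost exponent.} Telescoping the two alternating phases, the total $L^{2}$-cost is bounded by a series whose general term has the form $e^{2d_{2}\lambda_{j+1}}\prod_{i\leq j}e^{-2\tau_{i}'\lambda_{i}^{1/\gamma}}$; the convergence condition is that $\tau_{j}'\lambda_{j}^{1/\gamma}$ eventually dominates $\lambda_{j+1}$, which is available because $1/\gamma>1$ and amounts to $\tau_{j}'\gtrsim \rho^{-j(1/\gamma-1)}$. Balancing $\sum\tau_{j}'\leq T$ against this constraint by a careful choice of $\rho$ depending on $T$ produces the cost bound $C_{T}\leq C_{1}\exp(C_{2}T^{-\beta})$ for every $\beta>\gamma/(1-\gamma)$. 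The hard part is precisely this multi-parameter optimization: the triple $\rho$, $\tau_{j}$, $\tau_{j}'$ must be tuned simultaneously so as to exhaust the total time budget $T$, make the iterative series convergent, and deliver the sharp exponent $\beta$. The threshold $\gamma/(1-\gamma)$ is dictated by the scaling relation $\tau\lambda^{1/\gamma}\asymp\lambda$ between the spectral-inequality cost at frequency $\lambda$ and the high-frequency dissipation available in time $\tau$; any weaker decay would destroy the Lebeau-Robbiano summation.
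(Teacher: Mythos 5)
The paper does not actually prove this theorem: it is imported verbatim from Miller \cite{Miller2006} as part of the control-theoretic preliminaries, and the paper's only ``proof'' is the citation. Your sketch is a faithful reconstruction of the underlying Lebeau--Robbiano/Miller iteration — the reduction to final-state observability by duality, the dissipation rate $e^{-t\lambda^{1/\gamma}}$ on the orthogonal complement of $E_{\lambda}^{A^{\gamma}}(H)$, the dyadic alternation of active and passive time windows, and the scaling balance $\tau\lambda^{1/\gamma}\asymp\lambda$, equivalently $\lambda\asymp\tau^{-\gamma/(1-\gamma)}$, which produces the sharp exponent threshold $\beta>\gamma/(1-\gamma)$. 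The argument is correct in outline and consistent with the source the paper relies on; it is a strategy rather than a full proof (the multi-parameter tuning of $\rho$, $\tau_j$, $\tau_j'$ and the summation of the telescoping series are acknowledged but not carried out), but since the paper itself delegates the whole proof to Miller, there is no internal proof to compare against and no gap to report.
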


\section{Donnelly-Fefferman inequalities on compact Lie groups}\label{SectionofProofs}

In this section we prove our main Theorem \ref{Main:theorem}. 
We employ the following notation. 
\begin{itemize}
    \item[-] We denote by $\omega\neq \emptyset$  an open  non-empty subset in $G,$ and  a generic compact subset in $M$ will be denoted by $K.$  
    
    \item[-] For any $T>0,$ let us consider the space-time manifold $$G_T:=G\times [0,T],$$ and the Sobolev space $H^s(G_T)$ of order $s\in \mathbb{N},$ is defined by the norm 
      \begin{equation}  
      \Vert f\Vert_{H^s(G_T)}^2=\sum_{0\leq j\leq s}\int\limits_0^T\int\limits_G\left[\left|\partial_t^jf(x,t)\right|^2+|(1+\mathcal{L}_G)^{\frac{j}{2}}f(x,t)|^2\right]dxdt<\infty,
    \end{equation}where $\mathcal{L}_G$ is the positive Laplace-Beltrami operator on $G.$     
\end{itemize}

\begin{remark}[A topological construction]\label{A topological construction}For our further analysis we will make a topological construction. We do it by the following steps.
    \begin{itemize}
        \item  Step 1. Let us consider the cylinder $G_{T}=G\times [0,T].$ See Figure \ref{Fig1}.
        \item Step 2.  We fix a parameter $\varepsilon>0$ and we extend the cylinder $G_{T}$ in the time-variable $t$ until obtaining a new cylinder $G_{T,\varepsilon}=G\times [-T-\varepsilon,T+\varepsilon],$ and we identify its lateral boundaries $G\times\{-T-\varepsilon\}\sim G\times\{T+\varepsilon\}. $ See Figure \ref{Fig2}.
        \item Step 3. After the identification $G\times\{-T-\varepsilon\}\sim G\times\{T+\varepsilon\} $ the lateral boundaries of the  manifold $G_{T,\varepsilon}=G\times [-T-\varepsilon,T+\varepsilon]$ can be glued until obtaining the Lie group ${G}\times \mathbb{T}({T,\varepsilon})$ where $\mathbb{T}({T,\varepsilon})$ is the flat torus  $$ \mathbb{T}({T,\varepsilon}):=\mathbb{R}/2(T+\varepsilon)\mathbb{Z}\cong [-(T+\varepsilon),T+\varepsilon].$$ The manifold ${G}\times \mathbb{T}({T,\varepsilon})$ can be seen as a ``torus'' where any transversal section is a copy of  $G.$ See Figure \ref{Fig3}.
    \end{itemize}        
    \end{remark}
\begin{remark}[The operator $-\partial_t^2+A^{\frac{2}{  m  }}$ on ${G}\times \mathbb{T}({T,\varepsilon})$]  Let $A\in \Psi^m_{\rho,\delta}(G\times \widehat{G})$ be a positive elliptic matrix-valued pseudo-differential operator of order $m>0.$ 
    The operator $-\partial_t^2$ became, up to a constant, the (positive) Laplace-Beltrami type operator on $\mathbb{T}({T,\varepsilon}).$ 
    
    Note that $-\partial_t^2\in \Psi^2_{1,0}(\mathbb{T}({T,\varepsilon}))$ is a positive and elliptic differential operator of second order on $\mathbb{T}({T,\varepsilon}).$ Indeed, let us consider the orthogonal basis of $L^2(\mathbb{T}({T,\varepsilon}))$ formed by the exponential functions
\begin{equation}
 t\in \mathbb{T}({T,\varepsilon})\mapsto   \tilde{e}^\varepsilon_{k}(t)=\exp\left(\frac{2\pi i t k}{2(T+\varepsilon)}\right),\,\, k\in \mathbb{Z},
\end{equation}and let us consider the $L^2$-normalised system of $2(T+\varepsilon)$-periodic eigenfunctions $${e}^\varepsilon_{k}:=\tilde{e}^\varepsilon_{k}/\sqrt{2(T+\varepsilon)},$$ of the Laplacian $-\partial_t^2.$ The global symbol  of $-\partial_t^2$ is given by  
$$\sigma_{-\partial_t^2}(k)=\frac{4\pi^2 k^2}{4(T+\varepsilon)^2}=\left(\frac{\pi k}{T+\varepsilon}\right)^2,\,k\in\mathbb{Z}.   $$
The ellipticity of $-\partial_t^2$ follows from the following inequality
\begin{equation}
   \exists C_{1},C_{2}>0,\, \forall k\in \mathbb{Z},\,\, C_{1}|k|^2\leq |\sigma_{-\partial_t^2}(k)|\leq C_{2}|k|^2.
\end{equation}   Note that the constants $C_{1}$ and $C_{2}$ are independent of $\varepsilon>0$ if  $\varepsilon\in (0,1).$ 
Note that   $$\mathcal{A}(x,t,D,\partial_t)=-\partial_t^2+A^{\frac{2}{  m  }} \in \Psi^2_{1,\delta}({G}\times \mathbb{T}({T,\varepsilon}))$$ is also a positive and elliptic pseudo-differential operator on the Lie group ${G}\times \mathbb{T}({T,\varepsilon}).$ Note that $G_T$ can be viewed as  an open sub-manifold of the Lie group $$  {G}\times \mathbb{T}({T,\varepsilon})\cong G\times\mathbb{T}.$$ 

\end{remark}

\begin{figure}[h]
\includegraphics[width=7cm]{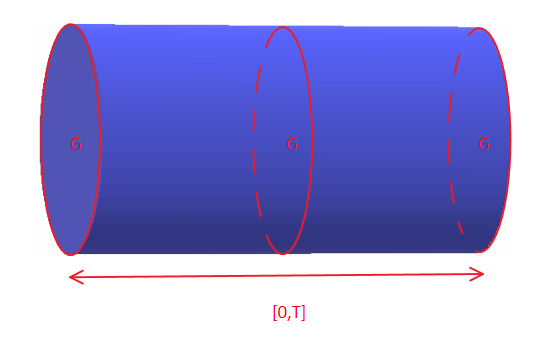}\\
\caption{Step 1: To consider the space-time manifold $G_{T}=G\times [0,T].$}
 \label{Fig1}
\centering
\end{figure}
\begin{figure}[h]
\includegraphics[width=9.5cm]{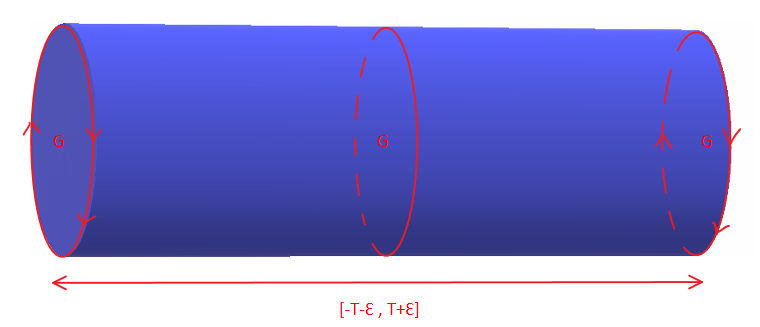}\\
\caption{We extend the cylinder $G_{T}$ in the time-variable $t$ until obtaining a new cylinder $G_{T,\varepsilon}=G\times [-T-\varepsilon,T+\varepsilon],$ and we identify its lateral boundaries $G\times\{-T-\varepsilon\}\sim G\times\{T+\varepsilon\}. $}
 \label{Fig2}
\centering
\end{figure}
\begin{figure}[h]
\includegraphics[width=6.5cm]{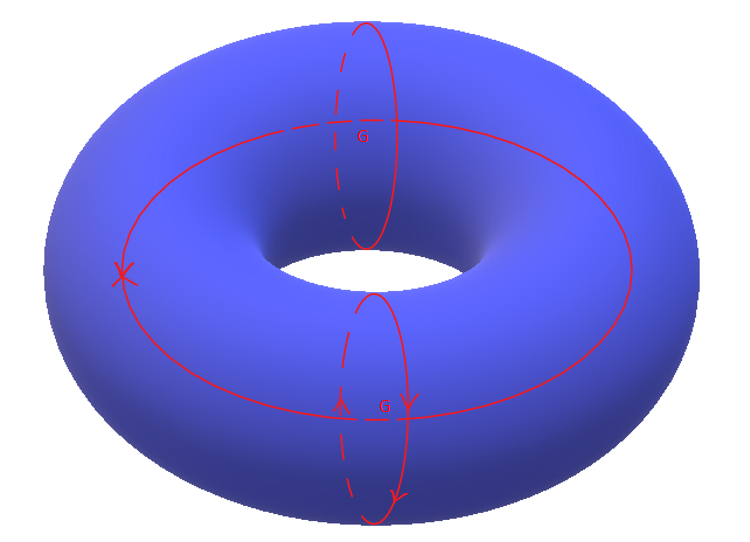}\\
\caption{We have constructed the Lie group ${G}\times \mathbb{T}({T,\varepsilon})$ where $\mathbb{T}({T,\varepsilon})$ is the flat torus  $ \mathbb{T}({T,\varepsilon}):=\mathbb{R}/2(T+\varepsilon)\mathbb{Z}\cong [-(T+\varepsilon),T+\varepsilon].$ This closed manifold is a compact Lie group. }
 \label{Fig3}
\centering
\end{figure}

Our analysis starts with the   spectral inequality in
Proposition \ref{Lemma:LR;Ineq} below that corresponds to the spectral  inequality \eqref{Spectral:Inequality:Intro} in Theorem \ref{Main:theorem} in the case where $A$ satisfies the lower bound $A\geq cI,$ for some $c>0.$ Then, the proof of \eqref{Spectral:Inequality:Intro} in Theorem \ref{Main:theorem} will be deduced from this particular situation.

\begin{proposition}\label{Lemma:LR;Ineq}  Let $A\in \Psi^m_{\rho,\delta}(G\times \widehat{G})$ be a positive elliptic pseudo-differential operator of order $m>0.$ Assume that $\sigma_A(x,\xi)\geq 0$ for all $(x,[\xi])\in G\times \widehat{G}.$ Moreover, assume that for some $c>0,$  $A\geq cI$ in $L^2(G),$ and that $\sigma_A(x,[\xi])\geq cI_{d_\xi}$ on every representation space.

 Then, for any non-empty open subset $\omega\subset M,$   any $a_j\in \mathbb{R},$ and all $\lambda>0,$ the following spectral inequality holds 
\begin{equation}\label{ObservabilityInequality}
    \left(\sum_{\lambda_j\leq \lambda}a_j^2\right)^\frac{1}{2}\leq C_1e^{C_2{\lambda}}\left\Vert \sum_{\lambda_j\leq \lambda}a_je_j(x)  \right\Vert_{L^2(\omega)},\end{equation}
    where $C_{1}>0$ and $C_{2}>0$ may depend on $\omega$ but not on $a_j,$ $\lambda>0$ or on the eigenfunctions $e_j.$
\end{proposition}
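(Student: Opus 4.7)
The plan is to reduce the spectral inequality for $A$ to a quantitative interpolation/unique continuation inequality for the second-order positive elliptic pseudo-differential operator $\mathcal{A}=-\partial_t^2+A^{2/m}$ on the extended compact Lie group $\widetilde{G}:=G\times\mathbb{T}(T,\varepsilon)$ constructed in Remark \ref{A topological construction}. This trades a spectral statement about the eigenfunction expansion of $A$ for a classical-type elliptic estimate in one extra dimension, where the spatial variable $t$ plays the role of a Cauchy-normal direction.

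First I would introduce the lifted function
\begin{equation*}
u(x,t):=\sum_{\lambda_j\leq \lambda}a_j\cosh(\lambda_j t)\,e_j(x),\qquad (x,t)\in G\times\mathbb{R}.
\end{equation*}
The hypothesis $A\geq cI$ with $c>0$, together with Corollary \ref{The:Inverse:Theorem} and the Dunford-Riesz calculus of Remark \ref{Global:complex:calculus}, guarantees that $A^{2/m}$ is a well-defined positive self-adjoint pseudo-differential operator in $\Psi^{2}_{\rho,\delta}(G\times\widehat{G})$ with the same eigenfunctions $e_j$ and eigenvalues $\lambda_j^2$. Since $\cosh''(\lambda_j t)=\lambda_j^2\cosh(\lambda_j t)$, it follows that $\mathcal{A}u=0$ on $G\times\mathbb{R}$, while $u(\cdot,0)=v:=\sum_{\lambda_j\leq\lambda}a_j e_j$.

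Next I would invoke a three-cylinder interpolation inequality for $\mathcal{A}$: for nested intervals $I_0\Subset I_1\Subset I_2\subset(-T,T)$ there exist $C>0$ and $\theta\in(0,1)$ (depending only on $\omega$ and the $I_k$) so that every smooth $w$ with $\mathcal{A}w=0$ on $G\times I_2$ satisfies
\begin{equation*}
\|w\|_{L^2(G\times I_0)}\leq C\,\|w\|_{L^2(\omega\times I_1)}^{\theta}\,\|w\|_{L^2(G\times I_2)}^{1-\theta}.
\end{equation*}
Applied to $u$ and combined with the following bounds coming from orthonormality of $\{e_j\}$ in $L^2(G)$ and $\cosh^2\geq 1$, $\cosh(\lambda_j t)\leq e^{\lambda|t|}$ for $\lambda_j\leq\lambda$:
\begin{equation*}
\|u\|_{L^2(G\times I_0)}^2\geq |I_0|\,\|v\|_{L^2(G)}^2,\qquad \|u\|_{L^2(G\times I_2)}^2\leq |I_2|\,e^{2\lambda|I_2|}\,\|v\|_{L^2(G)}^2,
\end{equation*}
and, via the decomposition $u(x,t)=v(x)+(u(x,t)-v(x))$ with the remainder controlled using $|\cosh(\lambda_j t)-1|\leq e^{\lambda|t|}-1$,
\begin{equation*}
\|u\|_{L^2(\omega\times I_1)}^2\leq 2|I_1|\,\|v\|_{L^2(\omega)}^2+2|I_1|\bigl(e^{\lambda|I_1|}-1\bigr)^2\|v\|_{L^2(G)}^2,
\end{equation*}
one arrives at an inequality of the form $\|v\|_{L^2(G)}\leq C(\|v\|_{L^2(\omega)}+\eta(\lambda,|I_1|)\|v\|_{L^2(G)})^{\theta}(e^{\lambda|I_2|}\|v\|_{L^2(G)})^{1-\theta}$. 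Choosing $|I_1|\asymp \lambda^{-1}$ so that $\eta(\lambda,|I_1|)$ is bounded and raising both sides to the power $1/\theta$ (absorbing the $\|v\|_{L^2(G)}^{1-\theta}$ factor) yields the claimed bound $\|v\|_{L^2(G)}\leq C_1 e^{C_2\lambda}\|v\|_{L^2(\omega)}$ after a standard Young-inequality absorption.

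The main obstacle is justifying the three-cylinder interpolation inequality for $\mathcal{A}$ on $\widetilde{G}$. Because $A^{2/m}$ is a genuinely nonlocal pseudo-differential operator, the standard Carleman-weight proof, which relies on pointwise commutator computations with a local second-order operator, is not directly available; this is precisely the reason the authors announce a new approach. One must either adapt a Carleman-type estimate to the ΨDO setting using the global matrix-valued calculus together with the positivity $\sigma_A(x,\xi)\geq c I_{d_\xi}$ on every representation space, or bypass the Carleman method entirely by exploiting the Dunford-Riesz calculus established in Lemma \ref{LemmaFC} and Theorem \ref{DunforRiesz} to compare $\mathcal{A}$ with the scalar Laplacian on $\widetilde{G}$, for which such interpolation inequalities are classical. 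Once this step is secured, the remaining estimates are straightforward bookkeeping.
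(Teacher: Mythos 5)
Your overall architecture is close to the paper's: lift the eigenfunction sum to a solution of $\mathcal{A}:=-\partial_t^2+A^{2/m}$ on $G\times\mathbb{T}(T,\varepsilon)$, invoke a quantitative interpolation/unique-continuation inequality, and circumvent the non-locality of $A^{2/m}$ by comparing $\mathcal{A}$ with $-\partial_t^2+\mathcal{L}_G$ via the $L^2$-boundedness of $(-\partial_t^2+\mathcal{L}_G)\mathcal{A}^{-1}$ from the Calder\'on--Vaillancourt theorem. These are indeed the paper's two structural ideas (Subsections \ref{Inverse:Subsection} and \ref{L2:theory}).

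However, your choice of lift and of interpolation inequality creates a genuine gap in the absorption step. You take $u(x,t)=\sum a_j\cosh(\lambda_j t)e_j(x)$ and a three-cylinder inequality whose observation term is the tube norm $\|u\|_{L^2(\omega\times I_1)}$. After your decomposition $u=v+(u-v)$ you obtain, schematically,
\begin{equation*}
\|v\|_{L^2(G)}\leq C_0\left(\|v\|_{L^2(\omega)}+\eta\,\|v\|_{L^2(G)}\right)^{\theta}\left(e^{\lambda|I_2|}\|v\|_{L^2(G)}\right)^{1-\theta},\qquad \eta\asymp e^{\lambda|I_1|}-1.
\end{equation*}
Raising to the power $1/\theta$ and cancelling $\|v\|_{L^2(G)}^{(1-\theta)/\theta}$ gives $\|v\|_{L^2(G)}\leq C_0^{1/\theta}e^{\lambda|I_2|(1-\theta)/\theta}\left(\|v\|_{L^2(\omega)}+\eta\,\|v\|_{L^2(G)}\right)$, and absorption of the second term requires $C_0^{1/\theta}e^{\lambda|I_2|(1-\theta)/\theta}\,\eta<1/2$. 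With $|I_1|\asymp\lambda^{-1}$ you only get $\eta\asymp 1$, so the coefficient of $\|v\|_{L^2(G)}$ on the right blows up like $e^{c\lambda}$ and the absorption fails. Shrinking $|I_1|$ faster to make $\eta$ exponentially small forces $\lambda$-dependent intervals, so $C_0$ and $\theta$ degenerate, and you lose control. This is not a bookkeeping issue; the $\cosh$-lift with a tube observation simply does not close.

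The paper avoids this entirely by choosing the odd lift $F(x,t)=\sum_{\lambda_j\le\lambda}\frac{\sinh(\lambda_j t)}{\lambda_j}a_j e_j(x)$, so that $F(\cdot,0)=0$ and $\partial_tF(\cdot,0)=\varkappa$, and by using the Jerison--Lebeau interpolation inequality of Lemma \ref{CarlemanInequality}, whose observation term is the \emph{trace} $\|\partial_t\phi(\cdot,0)\|_{L^2(\omega)}$ rather than a tube norm. That trace then equals $|\psi(0)|\,\|\varkappa\|_{L^2(\omega)}$ on the nose, with no remainder to absorb. In addition, the paper must actually verify your "main obstacle": it cuts off with a function $\psi$ (Lemma \ref{Lemma:fucntion:psi}), takes the odd periodic extension to $G\times\mathbb{T}(T,\varepsilon)$, proves parameter-ellipticity of $\mathcal{A}$ (Lemma \ref{parameterparametrix:A}), establishes uniform-in-$\varepsilon$ bounds for $\mathcal{A}^{-1}$ and $(-\partial_t^2+\mathcal{L}_G)\mathcal{A}^{-1}$ (Proposition \ref{The:constant:B}, Lemma \ref{Finite:Constant:CV}), estimates the source $\|\mathcal{A}\phi\|_{L^2}$ which is supported only on $G\times[T,T+\varepsilon]$, and sends $\varepsilon\to0^+$. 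You mention that comparing $\mathcal{A}$ with the scalar Laplacian via the Dunford--Riesz calculus is an option, but you do not carry it out; this is precisely what the bulk of the paper's proof does.

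To repair your argument you should switch to the $\sinh$-lift so that the observation can be taken at the time slice $t=0$, and then the remainder term disappears; alternatively, if you insist on a tube observation, you must explain how the three-cylinder constants depend on the interval lengths and show that the absorption survives, which the standard literature does not do at this level of generality.
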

We postpone the proof of Proposition \ref{Lemma:LR;Ineq} for a moment. Indeed,   for our further analysis we require the following interpolation inequality. It is formulated in the case of the compact Lie groups but it is still valid in the case of a compact Riemannian manifold $(M,g),$ see e.g. \cite{LebeauLebeau1998}.
\begin{lemma}\label{CarlemanInequality} Let us consider the operator $L(x,t,D,\partial_t)=-\partial_t^2+\mathcal{L}_G\in \Psi^2_{1,0}(G_T)$ with  $G_T=G\times (0,T)$. Let $\omega$ be a non-empty open subset  in $M.$ 

Then, for any $T>0$ and all $\alpha\in (0,T/2),$ there exists $\delta\in (0,1)$ such that
\begin{equation}\label{Global:Carleman:Estimate}
    \Vert \phi \Vert_{H^1(G\times (\alpha,T-\alpha))}\leq C\Vert \phi\Vert_{H^1(G_T)}^\delta\left(\Vert L(x,t,D,\partial_t)\phi \Vert_{L^2(G\times (0,T))} +\Vert \partial_t\phi\Vert_{L^2(\omega)} \right)^{1-\delta},
\end{equation}for all $\phi\in H^2(G\times (0,T))$ such that $\phi=0$ in $G\times \{0\}.$
\end{lemma}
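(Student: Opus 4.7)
The plan is to derive this interpolation inequality from Carleman estimates for the positive elliptic second-order differential operator $L=-\partial_t^2+\mathcal{L}_G$ on the cylinder $G_T$, combined with a propagation-of-smallness (chain-of-balls) argument, in the spirit of Lebeau and Robbiano \cite{LebeauLebeau1998}. The observation term $\|\partial_t\phi\|_{L^2(\omega)}$ is naturally interpreted as the boundary trace of $\partial_t\phi$ on $\omega\times\{0\}\subset G\times\{0\}$, which is well defined for $\phi\in H^2(G_T)$ with $\phi|_{t=0}=0$.

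First I would prove a boundary Carleman estimate near a point of $\omega\times\{0\}$. Working in a local chart and choosing a weight $\varphi=e^{\lambda\psi}$ with $\psi$ H\"ormander pseudo-convex for $L$ and with $\partial_t\psi|_{t=0}>0$, the usual conjugation and integration-by-parts argument yields
\begin{equation*}
\tau\!\int e^{2\tau\varphi}\bigl(\tau^2|u|^2+|\nabla u|^2\bigr)dx\,dt \,\leq\, C\!\int e^{2\tau\varphi}|Lu|^2\,dx\,dt + C\tau\!\int_{G\times\{0\}}\!e^{2\tau\varphi}|\partial_t u|^2\,dS,
\end{equation*}
valid for $\tau\geq\tau_0$ and for $u$ supported in a small neighbourhood $\mathcal{V}$ of a point $(x_0,0)\in\omega\times\{0\}$ with $u|_{t=0}=0$. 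Plugging in $u=\chi\phi$ for a cut-off $\chi$ equal to $1$ on a smaller set $\mathcal{V}'\Subset\mathcal{V}$, the commutator $[L,\chi]\phi$ is supported on $\mathcal{V}\setminus\mathcal{V}'$ and can be absorbed by the $\|\phi\|_{H^1(G_T)}$ term. Optimising in $\tau$ yields a local interpolation inequality
\begin{equation*}
\|\phi\|_{H^1(\mathcal{V}')}\leq C\|\phi\|_{H^1(G_T)}^{\delta_0}\bigl(\|L\phi\|_{L^2(G_T)}+\|\partial_t\phi\|_{L^2(\omega)}\bigr)^{1-\delta_0}
\end{equation*}
for some $\delta_0\in(0,1)$.

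Second I would propagate this bound from $\mathcal{V}'$ to $G\times(\alpha,T-\alpha)$ by the classical chain-of-balls method. Interior Carleman estimates for $L$ on $G_T$ give the three-ball inequality
\begin{equation*}
\|\phi\|_{H^1(B_2)}\leq C\bigl(\|\phi\|_{H^1(B_3)}+\|L\phi\|_{L^2(B_3)}\bigr)^{1-\theta}\bigl(\|\phi\|_{H^1(B_1)}+\|L\phi\|_{L^2(B_3)}\bigr)^{\theta}
\end{equation*}
for concentric balls $B_1\subset B_2\subset B_3\Subset G_T$ and some $\theta\in(0,1)$. Since $G\times(\alpha,T-\alpha)$ is connected and at positive distance from $\partial G_T$, it admits a covering by finitely many such triples chained so that the innermost ball of each triple lies in the middle ball of the previous one, starting from $\mathcal{V}'$. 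Iterating along this chain and combining with the boundary estimate produces the desired bound with a final exponent $\delta\in(0,1)$ equal to the product of the exponents of the individual links.

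The main obstacle is the careful choice of the Carleman weight $\psi$: it must be pseudo-convex for $L$ in each chart, match the boundary geometry at $G\times\{0\}$ so that the integration-by-parts identity produces a boundary term of definite sign, and be compatible with a patched global construction. This is the technical heart of the Lebeau--Robbiano method; on a compact Lie group $G$ it works out because a bi-invariant Riemannian structure is available and because pseudo-convexity for the (real, non-characteristic) principal symbol of $L$ reduces, in local coordinates, to a standard Euclidean condition that is satisfied by small perturbations of $\psi(x,t)=|x-x_0|^2-\kappa t^2$ for $\kappa>0$ small enough. Once the weights are in place, compactness of $G$ bounds the number of chain links uniformly, so that $\delta$ depends only on $T$, $\alpha$, and $\omega$.
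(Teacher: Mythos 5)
The paper does not prove this lemma: it is stated as a known interpolation inequality, with the text immediately before the statement pointing to Lebeau--Zuazua \cite{LebeauLebeau1998} and the text immediately after pointing to Rousseau--Lebeau \cite{RousseauLebeau2012} for the proof and for generalizations. Your outline (a local boundary Carleman estimate near a point of $\omega\times\{0\}$ with a cut-off to absorb the commutator, optimisation in the large parameter to obtain a local interpolation estimate, then propagation through the interior cylinder $G\times(\alpha,T-\alpha)$ by a finite chain of three-ball inequalities, using compactness of $G$ to bound the number of links) is exactly the Carleman/propagation-of-smallness argument those references use, so it coincides with the approach the paper relies on.
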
  We refer the reader to Rousseau and Lebeau \cite{RousseauLebeau2012} for extensions of this result even, for second-order elliptic operators with Lipchitz coefficients. In the following section we describe the approach that we introduce for the proof of Proposition \ref{Lemma:LR;Ineq}.
\subsection{Sketch of the proof of Proposition \ref{Lemma:LR;Ineq}}
As it was proved by Jerison and Lebeau \cite{JerisonLabeau}, the interpolation inequality \eqref{Global:Carleman:Estimate} can be used to prove the inequality \eqref{ObservabilityInequality} for the Laplacian $\mathcal{L}_G$ (or even for the Laplacian $-\Delta_g$ on an arbitrary compact Riemannian manifold $(M,g)$). From the point of view of the pseudo-differential calculus the operators 
    $$  L(x,t,D,\partial_t)=-\partial_t^2+\mathcal{L}_G \textnormal{   and   }\mathcal{A}(x,t,D,\partial_t)=-\partial_t^2+A^{\frac{2}{  m  }}$$  are similar. They are elliptic pseudo-differential operators of order 2. So, in order to prove \eqref{ObservabilityInequality}, we will construct a suitable function $\phi\in H^{2}$ in terms of the eigenfunctions $e_j,$ were the index $j$ is such that $\lambda_j\leq \lambda,$ and from the inequality in \eqref{Global:Carleman:Estimate}, we will deduce an inequality of the form
    \begin{equation}\label{Global:Carleman:Estimate:U}
    \Vert \phi \Vert_{H^1(G\times (\alpha,T-\alpha))}\leq C\Vert \phi\Vert_{H^1(G_T)}^\delta\left(\Vert \mathcal{A}(x,t,D,\partial_t)\phi \Vert_{L^2(G\times (0,T))} +\Vert \partial_t\phi\Vert_{L^2(\omega)} \right)^{1-\delta},
\end{equation} and by following the strategy by Jerison and Lebeau in \cite{JerisonLabeau} from the inequality in  \eqref{Global:Carleman:Estimate:U} we will deduce the inequality \eqref{ObservabilityInequality}.

Now, observe that the interpolations inequalities \eqref{Global:Carleman:Estimate} and \eqref{Global:Carleman:Estimate:U} are essentially similar. Indeed, they differ just by the  $L^2$-norms
$$ \Vert L(x,t,D,\partial_t)\phi \Vert_{L^2(G\times (0,T))}\textnormal{   and   }\Vert \mathcal{A}(x,t,D,\partial_t)\phi \Vert_{L^2(G\times (0,T))}.$$ Informally, if we were able to compute the inverse of the operator $\mathcal{A}(x,t,D,\partial_t),$ then  observe that the composition 
$$ L(x,t,D,\partial_t)  \mathcal{A}(x,t,D,\partial_t)^{-1}$$ would be a pseudo-differential  operator of order zero. Then, an application of a suitable ``Calderon-Vaillancourt'' theorem would provide the $L^2$-boundedness of the operator $L(x,t,D,\partial_t)  \mathcal{A}(x,t,D,\partial_t)^{-1}.$ All this informal argument is constructed under the pseudo-differential philosophy, indeed, the $L^2$-boundedness of such an operator would give the following estimate 
$$
    \Vert L(x,t,D,\partial_t)\phi \Vert_{L^2}= \Vert L(x,t,D,\partial_t)  \mathcal{A}(x,t,D,\partial_t)^{-1} \mathcal{A}(x,t,D,\partial_t)\phi \Vert_{L^2}$$ 
    $$\lesssim \Vert \mathcal{A}(x,t,D,\partial_t)\phi \Vert_{L^2}.
$$
Then, if all this works, from  \eqref{Global:Carleman:Estimate} we could obtain \eqref{Global:Carleman:Estimate:U}. So, the main difficulty in computing the inverse of the operator $\mathcal{A}(x,t,D,\partial_t)=-\partial_t^2+A^{\frac{2}{  m  }}$ is that it is acting on distributions defined in the cylinder $G_{T}=G\times [0,T].$ This is a compact manifold whose lateral boundaries are  $G\times\{0\}$ and $ G\times\{T\}, $ and the global pseudo-differential calculus in \cite{Ruz} does not allow the construction of parametrices and inverses on this kind of manifold. However, inspired a little bit by the topological constructions by Donaldson in \cite{Donaldson}, we will fix a parameter $\varepsilon>0$ and we will embed the cylinder $$G_T=G\times [0,T] $$ into the manifold
$$  G\times \mathbb{T}({T,\varepsilon})$$ that was constructed in Remark \ref{A topological construction}, (see Figures \ref{Fig1}, \ref{Fig2} and \ref{Fig3}). Our strategy will be to construct the inverse/parametrix of the operator $\mathcal{A}(x,t,D,\partial_t)=-\partial_t^2+A^{\frac{2}{  m  }}$ on the compact Lie group $ G\times \mathbb{T}({T,\varepsilon}),$ and then by using the calculus in \cite{Ruz} and the parameter-ellipticity notion developed in \cite{RuzhanskyWirth2014} we will prove an inequality of the form \eqref{Global:Carleman:Estimate:U}. The dependence of the parameter $\varepsilon$ will be eliminated forcing it to go to zero and showing that the auxiliary inequality that we get is stable under this limit procedure. Moreover, the inequality that we obtain will be given for the manifold $G\times [-T,T]$ but a symmetry property in the auxiliary function $\phi$ (that we construct later) will give us the required interpolation inequality on the interval $[0,T].$ To do this, we force the function $\phi=\phi(x,t)$ to be odd with respect to the time variable, that is $\phi(x,t)=-\phi(x,-t).$ Summarising, the proof of Proposition \ref{Lemma:LR;Ineq} will be constructed by  following three steps:
\begin{itemize}
    \item Step 1: To compute  the inverse of the operator $\mathcal{A}(x,t,D,\partial_t).$ We do this in Subsection \ref{Inverse:Subsection}.
    \item Step 2: To establish the  $L^2$-theory for the operator $(-\partial_t^2+\mathcal{L}_G)\mathcal{A}(x,t,D,\partial_t)^{-1}.$ This will be done in Subsection \ref{L2:theory}.
    \item Step 3: To use the Steps 1 and 2 in the proof of Proposition \ref{Lemma:LR;Ineq}. This will be presented in Subsection \ref{Begininnig}.
\end{itemize}

\subsection{Computing the inverse of  $\mathcal{A}(x,t,D,\partial_t)$ }\label{Inverse:Subsection} According to the hypothesis in Proposition \ref{Lemma:LR;Ineq},
let us consider the operator $A\geq cI.$
 We will analyse  the  invertibility of the operator $$\mathcal{A}(x,t,D,\partial_t)=-\partial_t^2+A^{\frac{2}{  m  }}:H^2(G\times \mathbb{T}({T,\varepsilon}))\rightarrow L^2(G\times \mathbb{T}({T,\varepsilon})).$$ 

Note that we have embedded  the manifold $G_{T}$ (with lateral boundary $\partial G_{T}=(G\times \{0\})\cup (G\times \{T\})$) on the compact Lie group $G\times \mathbb{T}({T,\varepsilon}).$  
In the next lemma  we prove that $\mathcal{A}(x,t,D,\partial_t)$ is parameter elliptic with respect to a suitable angle of the complex plane. 

\begin{lemma}\label{parameterparametrix:A}Let us consider the operator $A\geq cI$ of Proposition \ref{Lemma:LR;Ineq}. Then, it is parameter-elliptic with respect to the sector
\begin{equation}\label{Left:sector}
    \Lambda=\{\lambda+i\lambda':|\lambda'|\leq -\lambda,\,\lambda\leq 0\}.
\end{equation}
    
\end{lemma}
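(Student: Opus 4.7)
\textbf{Proof plan for Lemma \ref{parameterparametrix:A}.}  I will verify the two conditions in the definition of parameter-ellipticity directly, exploiting the self-adjointness of $\sigma_A(x,[\xi])$ and the geometry of the sector $\Lambda$.

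\textbf{Step 1 (Invertibility).}  By hypothesis $\sigma_A(x,[\xi])$ is a positive self-adjoint endomorphism of $\mathbb{C}^{d_\xi}$ satisfying $\sigma_A(x,[\xi])\geq cI_{d_\xi}$, so its spectrum lies in the interval $[c,+\infty)\subset\mathbb{R}_{+}$.  For any $\lambda=\lambda_r+i\lambda'\in\Lambda$ the defining condition $\lambda_r\leq 0$ and $|\lambda'|\leq -\lambda_r$ forces $\operatorname{Re}(\lambda)\leq 0<c$, so $\lambda$ cannot be an eigenvalue of $\sigma_A(x,[\xi])$.  Hence $\sigma_A(x,[\xi])-\lambda I_{d_\xi}$ is invertible in $\textnormal{GL}(d_\xi,\mathbb{C})$ for every $\lambda\in\Lambda$ and every $(x,[\xi])\in G\times \widehat{G}$.

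\textbf{Step 2 (A geometric lower bound for $|\mu-\lambda|$).}  Let $\mu\in\textnormal{spec}(\sigma_A(x,[\xi]))\subset[c,+\infty)$ and $\lambda\in\Lambda$.  Since $\mu\geq 0$ and $\lambda_r\leq 0$,
\[
|\mu-\lambda|^{2}=(\mu-\lambda_{r})^{2}+(\lambda')^{2}\geq(\mu+|\lambda_{r}|)^{2}\geq \mu^{2}+\lambda_{r}^{2}.
\]
The sector condition $|\lambda'|\leq -\lambda_r=|\lambda_r|$ yields $|\lambda|^{2}\leq 2\lambda_r^{2}$, so $\lambda_r^{2}\geq |\lambda|^{2}/2$, and therefore
\[
|\mu-\lambda|\geq \tfrac{1}{\sqrt{2}}\bigl(\mu+|\lambda|\bigr),
\]
uniformly in $\mu\geq 0$ and $\lambda\in\Lambda$.

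\textbf{Step 3 (Uniform lower bound on the spectrum of $\sigma_{A}$).}  The ellipticity of $A\in\Psi^{m}_{\rho,\delta}(G\times\widehat{G})$ means there exists $R>0$ such that $\sigma_A(x,[\xi])$ is invertible for $\langle\xi\rangle\geq R$ with $\|\sigma_A(x,[\xi])^{-1}\|_{\textnormal{op}}\leq C\langle\xi\rangle^{-m}$.  Combined with the self-adjointness of $\sigma_A$, this gives
\[
\mu_{\min}(\sigma_A(x,[\xi]))=\|\sigma_A(x,[\xi])^{-1}\|_{\textnormal{op}}^{-1}\geq C^{-1}\langle\xi\rangle^{m},\qquad \langle\xi\rangle\geq R.
\]
For the remaining (discretely many equivalence classes of) representations with $\langle\xi\rangle<R$, the hypothesis $\sigma_A(x,[\xi])\geq cI_{d_\xi}$ gives $\mu_{\min}\geq c$, which is itself comparable to $\langle\xi\rangle^{m}$ on this bounded range.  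Combining, there is $c_1>0$ with
\[
\mu_{\min}(\sigma_A(x,[\xi]))\geq c_1\langle\xi\rangle^{m},\qquad (x,[\xi])\in G\times\widehat{G}.
\]

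\textbf{Step 4 (Resolvent estimate).}  Because $\sigma_A(x,[\xi])$ is self-adjoint and $\lambda I_{d_\xi}$ is scalar, the matrix $\sigma_A(x,[\xi])-\lambda I_{d_\xi}$ is normal and can be diagonalised in the eigenbasis of $\sigma_A$.  Thus
\[
\bigl\|(\sigma_A(x,[\xi])-\lambda I_{d_\xi})^{-1}\bigr\|_{\textnormal{op}}=\max_{\mu\in\textnormal{spec}(\sigma_A(x,[\xi]))}|\mu-\lambda|^{-1}.
\]
Applying Steps 2 and 3,
\[
\bigl\|(\sigma_A(x,[\xi])-\lambda I_{d_\xi})^{-1}\bigr\|_{\textnormal{op}}\leq \frac{\sqrt{2}}{\mu_{\min}+|\lambda|}\leq \frac{C'}{\langle\xi\rangle^{m}+|\lambda|}.
\]
Finally, the elementary comparison $(1+\langle\xi\rangle+|\lambda|^{1/m})^{m}\asymp \langle\xi\rangle^{m}+|\lambda|$ (with constants independent of $\xi,\lambda$) yields
\[
\bigl\|(\sigma_A(x,[\xi])-\lambda I_{d_\xi})^{-1}\bigr\|_{\textnormal{op}}\leq C\bigl(1+\langle\xi\rangle+|\lambda|^{1/m}\bigr)^{-m},
\]
uniformly in $(x,[\xi])\in G\times\widehat{G}$ and $\lambda\in\Lambda$.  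This is the second defining inequality of parameter-ellipticity, completing the proof.

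The only delicate point in this argument is Step 3, where one must combine the high-frequency lower bound coming from global ellipticity with the low-frequency bound provided by the pointwise hypothesis $\sigma_A\geq cI_{d_\xi}$ to obtain a single lower bound of the form $c_1\langle\xi\rangle^{m}$ that is uniform across the whole unitary dual.  Once that bound is in hand, the sector geometry in Step 2 makes the rest routine.
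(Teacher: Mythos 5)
Your proof is correct and follows essentially the same route as the paper's: both diagonalise the positive self-adjoint symbol $\sigma_A(x,[\xi])$, use ellipticity together with the lower bound $\sigma_A\geq cI_{d_\xi}$ to show the smallest eigenvalue is comparable to $\langle\xi\rangle^{m}$, and exploit the geometry of the left sector $\Lambda$ to bound $|\mu-\lambda|$ from below by a multiple of $\mu+|\lambda|$. The only cosmetic difference is that you split the eigenvalue lower bound into large- and small-frequency cases explicitly, whereas the paper phrases the same fact as "$a(x,[\xi])^{-1}$ is elliptic of order $-m$".
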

\begin{proof}In order to prove that $A$ is parameter elliptic with respect to the sector in \eqref{Left:sector}, see Figure \ref{Sector},
\begin{figure}[h]
\includegraphics[width=6.5cm]{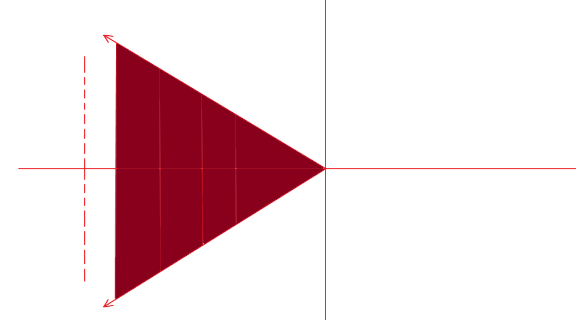}\\
\caption{The sector in \eqref{Left:sector}}
 \label{Sector}
\centering
\end{figure} we have to prove that the matrix-valued symbol $a(x,\theta)=\sigma_{A}(x,[\theta])$ of $A$ satisfies the following estimate \begin{equation*}
  \|( a(x,[\theta])-z I_{d_\xi})^{-1}\|_{\textnormal{op}}\leqslant C(1+\langle\theta\rangle+|z|^{\frac{1}{m}})^{-m},
\end{equation*}   uniformly in $x\in G,$ for all  $[\theta]\in \widehat{G}$ and all $z\in \Lambda.$ Note that $A$ is elliptic and then, we have the estimate
\begin{equation}\label{Ellip:coro}
    \Vert a(x,[\theta])\Vert_{\textnormal{op}}\geq C\langle \theta \rangle^{m},\,\,[\theta]\in \widehat{G}.
\end{equation}In some basis of the representation space, in view of the positivity of its symbol $a(x,[\theta])\geq 0,$ we can write it in diagonal form, that is
\begin{equation}
    a(x,[\theta])\equiv a(x,\theta)=\textnormal{diag}[\lambda_{11}(x,\theta),\cdots , \lambda_{d_\theta d_\theta}(x,\theta) ],\,\lambda_{ii}(x,\theta)\geq 0,\,1\leq i\leq d_\theta.
\end{equation}
Then, using that the order of $A$ is $m$ and \eqref{Ellip:coro} we have that
\begin{equation}\label{sup:estimate}
    \Vert a(x,[\theta])\Vert_{\textnormal{op}}=\sup_{1\leq i\leq d_\theta}  \lambda_{ii}(x,\theta)\asymp \langle\theta \rangle^m.
\end{equation}Note that the positivity hypothesis  $a=\sigma_A(x,[\theta])\geq cI_{d_\theta}$ in  Proposition \ref{Lemma:LR;Ineq}, implies the invertibility of the matrix-valued symbol $a=a(x,[\theta])$ in any representation space. Then, the the symbol

$$a(x,[\theta])^{-1}$$  is elliptic of order $-m.$ In particular it satisfies the inequality
$$ \forall [\theta]\in \widehat{G},\,\, \Vert a(x,[\theta])^{-1}\Vert_{\textnormal{op}}=\sup_{1\leq i\leq d_\theta}  \lambda_{ii}(x,\theta)^{-1}\asymp \langle\theta \rangle^{-m}.   $$ Then we have that
\begin{equation}\label{inf:estimate}
    \inf_{1\leq i\leq d_\theta}  \lambda_{ii}(x,\theta)\asymp \langle\theta \rangle^m.
\end{equation}Note that \eqref{sup:estimate} and \eqref{sup:estimate} imply that for any $z=\lambda+i\lambda'\in \Lambda$ we have
\begin{align*}
   \|( a(x,[\theta])-z I_{d_\xi})^{-1}\|_{\textnormal{op}} &\leqslant\sup_{1\leq i\leq d_\theta}  |\lambda_{ii}(x,\theta)-z|^{-1}=\sup_{1\leq i\leq d_\theta}  |\lambda_{ii}(x,\theta)-\lambda-i\lambda'|^{-1}\\
  &\lesssim \sup_{1\leq i\leq d_\theta} ( \lambda_{ii}(x,\theta)-\lambda+|\lambda'|)^{-1}\\
  &\lesssim( \langle \theta\rangle^m-\lambda+|\lambda'|)^{-1}\\
  &\asymp ( \langle \theta\rangle +(-\lambda+|\lambda'|)^{\frac{1}{m}})^{-m}\\
  &\asymp ( \langle \theta\rangle +|z|^{\frac{1}{m}})^{-m},
\end{align*}as desired. The proof of Lemma \ref{parameterparametrix:A} is complete.   
\end{proof}

\begin{remark}[Complex powers of $\mathcal{A}(x,t,D,\partial_t)$] Let us consider the sector in \eqref{Left:sector}, see Figure \ref{Sector}. Let $c>0$ be the lower bound in the condition on the operator $A\geq cI$ in  Proposition \ref{Lemma:LR;Ineq}.  If $$ 0<\varepsilon<\frac{c^{\frac{2}{m}}}{1000},$$ 
 consider the complex sector  
 $
     \Lambda_{\varepsilon}=\Lambda\cup\{z\in \mathbb{C}:|z|\leq \varepsilon\}
 $ in Figure \ref{Sector2} below.
\begin{figure}[h]
\includegraphics[width=6.5cm]{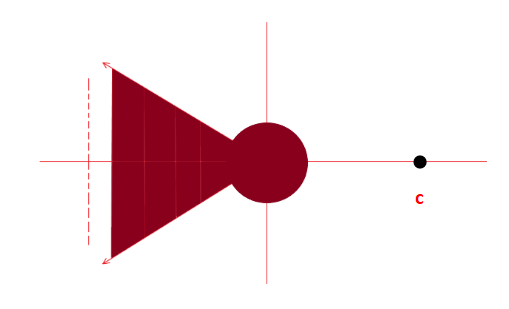}\\
\caption{The new sector $\Lambda_{\varepsilon}=\Lambda\cup\{z\in \mathbb{C}:|z|\leq \varepsilon\}.$}
 \label{Sector2}
\centering
\end{figure} 

Note that any $z\in \Lambda$ belongs to the resolvent of $\mathcal{A}(x,t,D,\partial_t).$ Indeed, the lower bound $A\geq cI,$ $c>0,$ implies that the spectrum of $A$  is contained in the infinite interval $[c,\infty).$ Moreover, the spectral mapping theorem implies that the spectrum of  $A^{\frac{2}{m}}$ is contained in $[c^{\frac{2}{m}},\infty).$ Since $-\partial_t^2$ is a positive operator on $L^2(\mathbb{T}({T,\varepsilon}))$ and $\lambda=0$ belongs to its spectrum, we have that
$$ \mathcal{A}(x,t,D,\partial_t)=-\partial_t^2+A^{\frac{2}{m}}  $$ is positive on $L^2(G\times \mathbb{T}({T,\varepsilon}))$ with its spectrum contained in $[c^{\frac{2}{m}},\infty).$ This analysis proves the inclusion $\Lambda_\varepsilon\subset \textnormal{Resolv}( \mathcal{A}(x,t,D,\partial_t)).$

In view of Lemma \ref{parameterparametrix:A} and of Theorem \ref{Complex:Powers:Th} we have that 
\begin{equation}
  z\in \mathbb{C}\mapsto  G^z:= \mathcal{A}(x,t,D,\partial_t)^{z},
\end{equation}is a holomorphic family of  pseudo-differential operators, that maps any $z\in \mathbb{C}$ into the class $\Psi^{2\textnormal{Re}(z)}_{\rho,\delta}(G\times \mathbb{T}({T,\varepsilon})),$ where
\begin{equation}\label{Dunfors-Riesz}
   G^zf(x)=-\frac{1}{2\pi i}\int\limits_{\partial\Lambda_\varepsilon }{\lambda^z}(\mathcal{A}(x,t,D,\partial_t)-\lambda I)^{-1}f(x)d\lambda,\,\, f\in C^\infty(G\times \mathbb{T}({T,\varepsilon})).
\end{equation}
In particular, with $z=-1,$ we have the inverse $G^{-1},$
\begin{equation}\label{Dunfors-Riesz:Inverse}
   G^{-1}f(x)=-\frac{1}{2\pi i}\int\limits_{\partial\Lambda_\varepsilon}{\lambda^{-1}}(\mathcal{A}(x,t,D,\partial_t)-\lambda I)^{-1}f(x)d\lambda,\,\, f\in C^\infty(G\times \mathbb{T}({T,\varepsilon})),
\end{equation} of $\mathcal{A}(x,t,D,\partial_t)$ on the orthogonal complement of its kernel. This is, if $P_0$ is the orthogonal projection on the subspace  $\textnormal{Ker}(\mathcal{A}(x,t,D,\partial_t)),$ $G^{-1}G=I-P_0,$ see Corollary \ref{The:Inverse:Theorem}. In view of the lower bound $A\geq cI,$ we deduce that $P_0$ is the null operator.
\end{remark}
  Moreover, we have the following property.
\begin{proposition}\label{The:constant:B}
Let $0<\varepsilon<1,$ and let us consider the operator norm 
$$ B_\varepsilon=\Vert \mathcal{A}(t,x,D,\partial_t)^{-1} \Vert_{\mathscr{B}(L^2(G\times \mathbb{T}({T,\varepsilon})),  H^2(G\times \mathbb{T}({T,\varepsilon})) )}.  $$
Then
\begin{equation}
    B:=\sup_{0<\varepsilon<1}B_\varepsilon \leq 1+1/c,
\end{equation}where $c>0$ in the constant is the positivity condition $A\geq cI$ of Proposition \ref{Lemma:LR;Ineq}. 
\end{proposition}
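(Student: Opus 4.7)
The plan is an energy estimate on the boundaryless compact manifold $G\times\mathbb{T}(T,\varepsilon)$, combined with the elliptic regularity of $A^{2/m}$ on $G$. Given $f\in L^2(G\times\mathbb{T}(T,\varepsilon))$, set $u:=\mathcal{A}(x,t,D,\partial_t)^{-1}f$, so that $-\partial_t^2 u+A^{2/m}u=f$. The hypothesis $A\geq cI$ and the spectral mapping theorem give $A^{2/m}\geq c^{2/m}I$; combined with $-\partial_t^2\geq 0$ this yields $\mathcal{A}\geq c^{2/m}I$ on $L^2(G\times\mathbb{T}(T,\varepsilon))$ and hence $\|u\|_{L^2}\leq c^{-2/m}\|f\|_{L^2}$, which produces the $1/c$-type contribution in the final bound.

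Since $G\times\mathbb{T}(T,\varepsilon)$ has no boundary and the pseudo-differential operator $A^{2/m}$ commutes with $\partial_t$, integration by parts in $t$ yields the identity
\begin{equation*}
\|f\|_{L^2}^2=\|\mathcal{A}u\|_{L^2}^2=\|\partial_t^2u\|_{L^2}^2+2\|A^{1/m}\partial_tu\|_{L^2}^2+\|A^{2/m}u\|_{L^2}^2,
\end{equation*}
where the complex powers $A^{1/m}$ and $A^{2/m}$ are well defined by Corollary \ref{Complex:Powers:Th}. Each nonnegative summand is individually bounded by $\|f\|_{L^2}^2$, which controls $\|\partial_t^j u\|_{L^2}$ for $j=0,1,2$ once we use $A^{1/m}\geq c^{1/m}I$ to pass from the middle term to $\|\partial_t u\|_{L^2}$.

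For the spatial part of the $H^2$ norm, $A^{2/m}$ is an elliptic element of $\Psi^2_{\rho,\delta}(G\times\widehat{G})$ by Corollary \ref{Complex:Powers:Th}, so by the calculus in Theorem \ref{RTcalculus:Group} the composition $(1+\mathcal{L}_G)(A^{2/m}+I)^{-1}$ belongs to $\Psi^0_{\rho,\delta}(G\times\widehat{G})$, hence is bounded on $L^2(G)$ with a constant depending only on the symbol of $A$. Applied pointwise in $t$ and integrated, this gives $\|(1+\mathcal{L}_G)u\|_{L^2}\lesssim\|A^{2/m}u\|_{L^2}+\|u\|_{L^2}$, and the analogous argument with half-powers handles the $(1+\mathcal{L}_G)^{1/2}$ term. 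Summing these contributions according to the definition of $\|u\|_{H^2(G\times\mathbb{T}(T,\varepsilon))}$ and absorbing the $c^{-2/m}$ factor from the $L^2$-step yields the claimed uniform bound $B_\varepsilon\leq 1+1/c$.

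The main obstacle is the uniformity of the constants in $\varepsilon\in(0,1)$. This is secured by the tensor structure of $G\times\mathbb{T}(T,\varepsilon)$: the pseudo-differential calculus on the $G$-factor is completely independent of $\varepsilon$, while the Fourier symbol $(\pi k/(T+\varepsilon))^2$ of $-\partial_t^2$ admits difference-operator estimates that remain uniformly bounded because $T+\varepsilon$ stays in the compact interval $[T,T+1]$. A secondary issue is book-keeping of the precise numerical constant; one has to verify that the integration-by-parts identity delivers coefficients equal to $1$ (which is why it was crucial to work on the boundaryless manifold produced by the gluing of Remark \ref{A topological construction}) and that only the $L^2$-step introduces the $c$-dependent factor in the final inequality.
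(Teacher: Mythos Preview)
Your approach is correct in spirit but takes a considerably longer route than the paper, and your claim about the precise constant is not justified. The paper bypasses the energy identity and the pseudo-differential calculus entirely: since $\{e_j\}$ diagonalises $A^{2/m}$ on $L^2(G)$ and $\{e^{\varepsilon}_k\}$ diagonalises $-\partial_t^2$ on $L^2(\mathbb{T}(T,\varepsilon))$, the tensor basis $\{e^{\varepsilon}_k\otimes e_j\}$ simultaneously diagonalises $\mathcal{A}$, with eigenvalues $\mu_{k,\varepsilon}+\lambda_j^2$. The paper then takes the $H^2$ norm on $G\times\mathbb{T}(T,\varepsilon)$ to be $\|(I+\mathcal{A})\cdot\|_{L^2}$, so the computation reduces to the scalar inequality $(1+\mu_{k,\varepsilon}+\lambda_j^2)/(\mu_{k,\varepsilon}+\lambda_j^2)\leq 1+1/c$, which follows from the lower bound on $\lambda_j^2$. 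This is a two-line spectral argument, and it is what produces the explicit constant $1+1/c$.

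Your integration-by-parts identity and the use of $(1+\mathcal{L}_G)(A^{2/m}+I)^{-1}\in\Psi^0_{\rho,\delta}$ do give an $\varepsilon$-uniform bound, but the constant you obtain is of the form $C_A(1+c^{-2/m})$ with $C_A$ coming from the Calder\'on--Vaillancourt theorem, and the $\|\partial_t u\|$ term brings in a further $c^{-1/m}$. Your assertion that ``only the $L^2$-step introduces the $c$-dependent factor'' and that the final bound collapses to $1+1/c$ is not correct: the spatial regularity step carries an implicit constant that is neither equal to $1$ nor expressible as $1/c$. For the downstream application in the paper only the uniformity in $\varepsilon$ matters, so your argument would suffice there; but as a proof of the proposition as stated, the sharp constant is not delivered by your method, and the paper's simpler spectral diagonalisation is what is actually needed.
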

\begin{proof}Let us consider the orthogonal basis of $L^2(\mathbb{T}({T,\varepsilon}))$ formed by the exponential functions
\begin{equation}
 t\in \mathbb{T}({T,\varepsilon})
 \mapsto   \tilde{e}^\varepsilon_{k}(t)=\exp\left(\frac{2\pi i t k}{2(T+\varepsilon)}\right),\,\, k\in \mathbb{Z},
\end{equation}and let us consider the $L^2$-normalised system of $2(T+\varepsilon)$-periodic eigenfunctions $${e}^\varepsilon_{k}:=\tilde{e}^\varepsilon_{k}/\sqrt{2(T+\varepsilon)},$$ of the Laplacian $-\partial_t^2.$ The corresponding eigenvalues of $-\partial_t^2$ are given by  
$$ \mu_{k,\varepsilon}=\frac{4\pi^2 k^2}{4(T+\varepsilon)^2}=\left(\frac{\pi k}{T+\varepsilon}\right)^2,\,k\in\mathbb{Z}.   $$ Since $\{{e}^\varepsilon_{k}\otimes e_j\}$ is a basis for $L^2(G\times \mathbb{T}({T,\varepsilon}) )$ the spectrum of the operator $-\partial_t^2+A^{\frac{2}{  m  }}$ is determined by the sequence
$$   \mu_{k,\varepsilon}+\lambda_j^2= \left(\frac{\pi k}{T+\varepsilon}\right)^2+\lambda_j^2,\,\,k\in \mathbb{Z},\,j\in \mathbb{N}_0.$$
Since $A\geq cI,$  we have the eigenvalue inequality $\lambda_k\geq c,$ and then for any $f\in L^2(G\times \mathbb{T}({T,\varepsilon}) )$ we have that
\begin{align*}
    \Vert \mathcal{A}(t,x,D,\partial_t)^{-1} f\Vert_{H^2}^2 &=\left\Vert \sum_{k,j} ( \mu_{k,\varepsilon}+\lambda_j^2)^{-1}(f, {e}^\varepsilon_{k}\otimes e_j){e}^\varepsilon_{k}\otimes e_j\right\Vert_{
    H^2}^2\\
    & =\sum_{k,j} (1+ \mu_{k,\varepsilon}+\lambda_j^2)^2( \mu_{k,\varepsilon}+\lambda_j^2)^{-2}|(f, {e}^\varepsilon_{k}\otimes e_j)|^2\\
    & \leq \left(\frac{1}{c}+1\right)^2\sum_{k,j} |(f, {e}^\varepsilon_{k}\otimes e_j)|^2=\left(\frac{1}{c}+1\right)^2\Vert f \Vert_{L^2}^2.
\end{align*}From the previous analysis we deduce that $B\leq 1+\frac{1}{c}$ as desired.
\end{proof}

\subsection{$L^2$-theory for the operator $(-\partial_t^2+\mathcal{L}_G)\mathcal{A}(x,t,D,\partial_t)^{-1}$}\label{L2:theory}
The global Calder\'on-Vaillancourt  theorem is a sharp $L^2$-estimate for pseudo-differential operators. We will apply it in the proof of our spectral inequality. Indeed,  let us consider the operator $A\geq cI$ of Proposition \ref{Lemma:LR;Ineq}. Observe that
$$ (-\partial_t^2+\mathcal{L}_G)\in \Psi^2_{1,0}(G \times \mathbb{T}({T,\varepsilon}) \times \widehat{G}\times 2(T+\varepsilon)\mathbb{Z}),$$   belongs to the H\"ormander class of order 2. Also, we have that
$$ \mathcal{A}(x,t,D,\partial_t)=-\partial_t^2+A^{\frac{2}{m}}  \in \Psi^2_{1,\delta}(G \times \mathbb{T}({T,\varepsilon}) \times \widehat{G}\times 2(T+\varepsilon)\mathbb{Z}), $$
and for the inverse of $\mathcal{A}(x,t,D,\partial_t)$ we have  $$ \mathcal{A}(x,t,D,\partial_t)^{-1}\in \Psi^{-2}_{1,\delta}(G \times \mathbb{T}({T,\varepsilon}) \times \widehat{G}\times 2(T+\varepsilon)\mathbb{Z}).$$  The pseudo-differential calculus implies that
$$ F(x,t,D,\partial_t):=(-\partial_t^2+\mathcal{L}_G)\mathcal{A}(x,t,D,\partial_t)^{-1}\in \Psi^0_{1,\delta}(G \times \mathbb{T}({T,\varepsilon}) \times \widehat{G}\times 2(T+\varepsilon)\mathbb{Z}) .$$
The global Calder\'on-Vaillancourt theorem implies that $F(x,t,D,\partial_t)$ is a bounded operator on $L^2(G \times \mathbb{T}({T,\varepsilon}) \times \widehat{G}\times 2(T+\varepsilon)\mathbb{Z}).$ Note that the global quantisation allows writing the operator $F(x,t,D,\partial_t)$ as follows
\begin{equation}
    F(x,t,D,\partial_t)u(x,t)=\sum_{k\in \mathbb{Z}^n}\sum_{[\xi]\in \widehat{G}}d_\xi \textnormal{Tr}[(\xi\otimes e^{\frac{i2\pi (\cdot,k)}{2(T+\varepsilon)}})(x,t)\sigma(x,t,\xi,k)\widehat{u}(\xi,k)],
\end{equation}where, for any $u\in C^\infty_0(G\times \mathbb{T}({T,\varepsilon})),$  the Fourier transform of $u$
at 
$$ (\xi,k)\in   \widehat{G\times\mathbb{T} }({T,\varepsilon})\cong \widehat{G}\times  2(T+\varepsilon)\mathbb{Z}, $$
is defined by
\begin{equation}
    \widehat{u}(\xi,k)=\int\limits_{\mathbb{T}({T,\varepsilon})}\int\limits_{G}e^{-\frac{i2\pi (t,k)}{2(T+\varepsilon)}}\xi(x)^{*}{u}(x,t)dx dt,\,k\in \mathbb{Z},\,[\xi]\in \widehat{G}.
\end{equation}
For the toroidal variable, we have used the toroidal calculus, see Subsection \ref{periodicclasses} or  \cite{Ruz} for details. From now, $\Delta_k$ denotes the difference operator on a lattice.  
Note that the matrix-valued symbol $\sigma_F(x,t,\xi,k)$ of the operator $  F(x,t,D,\partial_t)$ admits an asymptotic expansion of the form
\begin{equation*}
     \sigma_F(x,t,\xi,k)\sim \sum_{j=0}^\infty \sigma_{m-j}(x,t,\xi,k),\,\,(x,t,\xi,k)\in G\times \mathbb{T}({T,\varepsilon})\times\widehat{G}\times  2(T+\varepsilon)\mathbb{Z},
\end{equation*}in the sense that
\begin{equation*}
 \forall N\in \mathbb{N},\,    \sigma_F(x,t,\xi,k)- \sum_{j=0}^N \sigma_{m-j}(x,t,\xi,k)\in S^{m-(N+1)(1-\delta)}(G\times\mathbb{T}({T,\varepsilon})\times\widehat{G}\times  2(T+\varepsilon)\mathbb{Z}).
\end{equation*}
Let $a(x,\xi)^{\frac{2}{  m  }}$ denote the matrix-valued symbol of $A^{\frac{2}{  m  }},$ (defined by the functional calculus of matrices).
The matrix-valued component $\sigma_{F}$ of higher order of the quotient operator $$ F(x,t,D,\partial_t):=(-\partial_t^2+\mathcal{L}_G)\mathcal{A}(x,t,D,\partial_t)^{-1}$$  is given by
\begin{equation}
     \sigma_F(x,t,\xi,k)=\sigma^{\varepsilon}_F(x,t,\xi,k):=\left({\left(\frac{\pi k}{T+\varepsilon}\right)^2+\langle\xi\rangle^2}\right)\left({\left(\frac{\pi k}{T+\varepsilon}\right)^2 I_{d_\xi}+a(x,[\xi])^{\frac{2}{m}}}\right)^{-1}.
\end{equation}
Since the ellipticity of $A^{\frac{2}{  m  }},$ implies that$$ C_1\langle\xi\rangle^2\leq\Vert a(x,\xi)^{\frac{2}{  m  }}\Vert_{\textnormal{op}} \leq C_2\langle\xi\rangle^2, \,\,[\xi]\in \widehat{G},  $$  the symbol $\sigma_F$ is elliptic of order zero, satisfying  the inequality
$$   \tilde{C}_1\leq \| \sigma_F(x,t,\xi,k)\|\leq \tilde{C}_2,$$ with $C_{1}$ and $C_2$ independent of $\varepsilon\in (0,1).$
In view of the positivity hypothesis  $\sigma_{A}(x,[\xi])\geq cI_{d_\xi}$ on every representation space, the family \begin{equation}
    [0,1]\mapsto \sigma_F(x,t,\xi,k)=\sigma^{\varepsilon}_F(x,t,\xi,k)
\end{equation} is a smooth function from the unit interval $[0,1]$ to the class $$ S^{0}_{1,\delta}(G\times\mathbb{T}({T,\varepsilon})\times\widehat{G}\times  2(T+\varepsilon)\mathbb{Z}),$$ endowed with its natural Fr\'echet structure.   As a consequence the supremum
$$   \sup_{\varepsilon\in [0,1]}\sup_{(x,t,[\xi],k)}(1+|k|+\langle \xi\rangle)^{|\alpha|+|\gamma|-\delta|\beta|} \|\partial_{x,t}^\beta\mathbb{D}^\alpha\Delta_k^{\gamma} \sigma_F^\varepsilon(x,t,\xi,k)\|_{\textnormal{op}}<\infty,$$
is bounded. This implies that $\sigma_F$ satisfies inequalities of the type
\begin{equation*}
  \|\partial_{x,t}^\beta\mathbb{D}^\alpha\Delta_k^{\gamma} \sigma_F(x,t,\xi,k)\|_{\textnormal{op}}  \leqslant C_{\alpha,\beta,\gamma}(1+|k|+\langle \xi\rangle)^{-|\alpha|-|\gamma|+\delta|\beta|},
\end{equation*} where the constants $ C_{\alpha,\beta,\gamma}$ are independent of the parameter $\varepsilon\in [0,1].$
Since the Calder\'on-Vaillancourt estimates the $L^2$-boundendess of $F$ in terms of the constants $C_{\alpha,\beta,\gamma}$ and of $\tilde{C}_2,$ (see Remark \ref{remark:CVTh}) that is, for any $u\in C^\infty_0(\mathbb{T}({T,\varepsilon})\times \tilde{U}),$
\begin{equation}\label{Estimate:Calderon:Vaillancourt}
    \Vert  F(x,t,D,\partial_t) u\Vert_{L^2}\leq \left(\sup_{|\alpha|+|\beta|+|\gamma|\leq  \ell }{ \{C_{\alpha,\beta,\gamma}, \tilde{C_2}\}  }\right)\Vert u\Vert_{L^2},
\end{equation} where $\ell\in \mathbb{N}$ is big enough. As a consequence of this discussion we have proved the following lemma.
\begin{lemma}\label{Finite:Constant:CV} Let $0<\varepsilon<1,$ and let us consider the operator norm 
$$ C_\varepsilon=\Vert (-\partial_t^2+\mathcal{L}_G)\mathcal{A}(t,x,D,\partial_t)^{-1} \Vert_{\mathscr{B}(L^2(G\times \mathbb{T}({T,\varepsilon})))}.  $$
Then
\begin{equation}
    C:=\sup_{0<\varepsilon<1}C_\varepsilon <\infty.
\end{equation}Moreover, there is $\ell\in \mathbb{N}_0$ large enough such that 
    \begin{equation}
     C\lesssim    \left(\sup_{|\alpha|+|\beta|+|\gamma|\leq  \ell }{ \{C_{\alpha,\beta,\gamma}, \tilde{C_2}\}  }\right)\Vert u\Vert_{L^2}.
    \end{equation}
\end{lemma}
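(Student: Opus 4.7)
The plan is to derive Lemma \ref{Finite:Constant:CV} as the direct payoff of the symbolic analysis already carried out in the paragraph preceding the statement, the only remaining point being the uniformity of the constants with respect to $\varepsilon \in (0,1)$. Specifically, I would apply the global Calder\'on--Vaillancourt theorem (the Remark following Theorem \ref{RTcalculus:Group}, in its product form on $G \times \mathbb{T}(T,\varepsilon)$ combining the compact-Lie-group calculus of \cite{Ruz} with the toroidal calculus of Subsection \ref{periodicclasses}) to the zero-order operator
\[
F(x,t,D,\partial_t) = (-\partial_t^2 + \mathcal{L}_G)\,\mathcal{A}(x,t,D,\partial_t)^{-1}\in \Psi^0_{1,\delta}\bigl(G\times \mathbb{T}(T,\varepsilon)\times \widehat{G}\times 2(T+\varepsilon)\mathbb{Z}\bigr),
\]
which yields $C_\varepsilon \lesssim \sup_{|\alpha|+|\beta|+|\gamma|\le \ell} C^\varepsilon_{\alpha,\beta,\gamma}$, where $\ell$ depends only on $\dim(G)+1$ and $C^\varepsilon_{\alpha,\beta,\gamma}$ are the natural $(\rho,\delta)$-seminorms of the symbol $\sigma_F^\varepsilon$.

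Next I would show that $C^\varepsilon_{\alpha,\beta,\gamma}$ is bounded uniformly in $\varepsilon \in (0,1)$. The explicit formula
\[
\sigma_F^\varepsilon(x,t,\xi,k) = \Bigl(\tfrac{\pi^2 k^2}{(T+\varepsilon)^2} + \langle\xi\rangle^2\Bigr)\Bigl(\tfrac{\pi^2 k^2}{(T+\varepsilon)^2}\,I_{d_\xi} + a(x,\xi)^{2/m}\Bigr)^{-1}
\]
shows that $\varepsilon$ enters only through the scalar $T+\varepsilon \in (T,T+1)$, which is kept inside a fixed compact subinterval of $(0,\infty)$. The positivity hypothesis $\sigma_A(x,\xi)\ge cI_{d_\xi}$ together with the ellipticity bounds $C_1 \langle\xi\rangle^2 \le \|a(x,\xi)^{2/m}\|_{\textnormal{op}} \le C_2 \langle\xi\rangle^2$ guarantee the uniform lower matrix estimate
\[
\tfrac{\pi^2 k^2}{(T+\varepsilon)^2}\,I_{d_\xi} + a(x,\xi)^{2/m} \ge \Bigl(\tfrac{\pi^2 k^2}{(T+\varepsilon)^2} + c^{2/m}\Bigr) I_{d_\xi},
\]
so the operator norm of the matrix inverse appearing in $\sigma_F^\varepsilon$ is controlled independently of $\varepsilon$.

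Derivative estimates in $x,t,\xi,k$ will then follow by the Leibniz rule for $\partial^\beta_{x,t}$, $\mathbb{D}^\alpha$ and $\Delta^\gamma_k$, coupled with the matrix identity $\partial B^{-1} = -B^{-1}(\partial B)B^{-1}$ applied inductively. Each differentiation with respect to $t$ or $k$ contributes a factor $(T+\varepsilon)^{-1}$, which is bounded for $\varepsilon \in (0,1)$, while the ellipticity and positivity bounds above keep all matrix inverses bounded. An equivalent way of packaging this is to view $\varepsilon \mapsto \sigma_F^\varepsilon$ as a continuous (indeed smooth) map from the closed interval $[0,1]$ into the Fr\'echet space $S^0_{1,\delta}$, as was already noted in the excerpt; each seminorm then attains its supremum on the compact set $[0,1]$.

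The main obstacle, and the point where the hypothesis $A \ge cI$ is essential, is precisely step two: without a uniform positive lower bound on $a(x,\xi)^{2/m}$ the matrix to be inverted could be arbitrarily close to singular for small frequencies $k$ in the toroidal variable, and no $\varepsilon$-uniform control of the inverse would be possible. Once uniformity is established, combining the global Calder\'on--Vaillancourt bound with the supremum $C = \sup_{0<\varepsilon<1}\sup_{|\alpha|+|\beta|+|\gamma|\le \ell}\{C_{\alpha,\beta,\gamma}, \tilde C_2\}$ taken over the resulting uniform family of symbol constants yields both conclusions of the lemma simultaneously.
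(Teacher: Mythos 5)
Your proposal is correct and takes essentially the same approach as the paper: identify $F = (-\partial_t^2 + \mathcal{L}_G)\mathcal{A}^{-1}$ as an operator in $\Psi^0_{1,\delta}$ on $G\times\mathbb{T}(T,\varepsilon)$, invoke the global Calder\'on--Vaillancourt theorem to bound $C_\varepsilon$ by finitely many $(\rho,\delta)$-seminorms of the symbol, and then show those seminorms are bounded uniformly in $\varepsilon\in(0,1)$ by exploiting the fact that $\varepsilon$ enters only through $T+\varepsilon$ in a compact interval together with the lower bound $\sigma_A\ge cI_{d_\xi}$ that keeps the matrix inverses uniformly controlled. The paper packages the uniformity as continuity of $\varepsilon\mapsto\sigma_F^\varepsilon$ from the compact interval $[0,1]$ into the Fr\'echet space $S^0_{1,\delta}$, which you also note as equivalent; your explicit Leibniz/resolvent-derivative computation is a slightly more hands-on way of justifying the same uniform seminorm bounds, and is a welcome elaboration rather than a departure.
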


\subsection{Proof of Proposition \ref{Lemma:LR;Ineq}}\label{Begininnig} We shall reduce the proof of this proposition to an interpolation inequality. We explain this strategy as follows.
\begin{remark}\label{About:the:proof:of:propo}Let us consider a spectra parameter $\lambda>0.$ Let $\varkappa
\in \textnormal{Im}(\textnormal{E}_{\lambda}).$ Then, $\varkappa$ can be written as linear combinations of the eigenfunctions $e_j,$ where $\lambda_j\leq \lambda,$ that is
\begin{equation}
    \varkappa(x)=\sum_{\lambda_j\leq \lambda}a_je_j(x).
\end{equation}
We note that for the proof of \eqref{ObservabilityInequality}, is enough to show that
\begin{equation}\label{The:main:inequality:here}
    F(x,t):=\sum_{\lambda_j\leq \lambda}\frac{\sinh(\lambda_jt)}{\lambda_j} a_je_j(x),\,\,(x,t)\in G_T:=G\times [0,T],
\end{equation}satisfies the interpolation inequality
\begin{equation}\label{Interpolation:Inequality}
    \Vert F \Vert_{H^1(G\times (\alpha,T-\alpha))}\leq C\Vert F\Vert_{H^1(G_{T})}^\kappa \Vert \varkappa \Vert_{L^2(\omega)}^{1-\kappa}.
\end{equation}
Indeed, by the  Parseval theorem  we have that
\begin{align*} 
\Vert F\Vert^2_{H^1(G\times (\alpha,T-\alpha))} &\geq C \Vert F\Vert^2_{L^2(G\times (\alpha,T-\alpha))}\\
&=\int\limits_\alpha^{T-\alpha}\int\limits_G\left|\sum_{\lambda_j\leq \lambda}\frac{\sinh(\lambda_jt)}{\lambda_j} a_je_j(x)\right|^2dx\,dt\\
&=\sum_{\lambda_j\leq \lambda}|a_j|^2\int\limits_\alpha^{T-\alpha}\left|\frac{\sinh(\lambda_jt)}{\lambda_j}\right|^2dt\\
&\geq \sum_{\lambda_j\leq \lambda}|a_j|^2\int\limits_\alpha^{T-\alpha}t^2dt\\
&=C_{\alpha}\sum_{\lambda_j\leq \lambda}|a_j|^2.
\end{align*}
Observing that
\begin{equation}
   \partial_tF(x,0)= \sum_{\lambda_j\leq \lambda}a_je_j(x),
\end{equation}
and  that
$$   \Vert F\Vert_{H^1(G_T)}^2\lesssim e^{2T\lambda}\lambda^2\sum_{\lambda_j\leq \lambda}|a_j|^2$$
we deduce the inequality
\begin{equation}
    C_{\alpha}\sum_{\lambda_j\leq \lambda}|a_j|^2\lesssim_{\alpha,T}\left( e^{2T\lambda}\lambda^2\sum_{\lambda_j\leq \lambda}|a_j|^2\right)^{\kappa}\left\Vert \sum_{\lambda_j\leq \lambda}a_je_j(x)  \right\Vert_{L^2(\omega)}^{2(1-\kappa)}.
\end{equation}Hence
\begin{align*}
    \left( \sum_{\lambda_j\leq \lambda}|a_j|^2\right)^{1-\kappa}\lesssim e^{2\varkappa T\lambda}\lambda^{2 \varkappa } \left\Vert \sum_{\lambda_j\leq \lambda}a_je_j(x)  \right\Vert_{L^2(\omega)}^{2(1-\kappa)},
\end{align*}and consequently
\begin{align*}
    \left( \sum_{\lambda_j\leq \lambda}|a_j|^2\right)^{\frac{1}{2}}\lesssim e^{{\varkappa  T\lambda/(1-\kappa)}}\lambda^{{\varkappa /(1-\kappa)} }\left\Vert \sum_{\lambda_j\leq \lambda}a_je_j(x)  \right\Vert_{L^2(\omega)},
\end{align*}proving \eqref{ObservabilityInequality}. Note also that we can estimate $e^{{ \varkappa T\lambda/(1-\kappa)}}\lambda^{{\varkappa /(1-\kappa)} }\lesssim e^{{C_{2}\varkappa  T\lambda/(1-\kappa)}}$ for some $C_{2}>0.$     
\end{remark}

\begin{proof}[Proof of  Proposition \ref{Lemma:LR;Ineq}] In view of Remark \ref{About:the:proof:of:propo}
we proceed with the proof of the inequality \eqref{Interpolation:Inequality}.
Note that, by normalising $\varkappa$ on $L^2(G)$ we can assume without loss of generality  that $\Vert \varkappa\Vert_{L^2(G)}=1.$
\subsubsection{ An auxiliar interpolation inequality on $[0,T+\varepsilon)$.}
Let $\varepsilon\in (0,1)$ be a positive parameter whose conditions will be imposed later.  Firstly, by replacing in  Lemma \ref{CarlemanInequality} the open interval $I_T:=(0,T)$ by $I_{T+\varepsilon}:=(0,T+\varepsilon),$ and with  $ \tilde{G}_{T+\varepsilon}:=G\times (0,T+\varepsilon), $ we shall make use of the following interpolation inequality:\\

\fbox{%
\parbox{\textwidth}{
 { \it 
  For any $T>0$ and all $\alpha\in (0,T/2),$ there exists $\kappa\in (0,1)$ such that
\begin{equation}\label{Global:Carleman:EstimateProofSpectralIneq} 
    \Vert \phi \Vert_{H^1(G\times (\alpha,T-\alpha))}\leq C\Vert \phi\Vert_{H^1(G_{T+\varepsilon})}^\kappa\left(\Vert (-\partial_t^2+\mathcal{L}_G)\phi \Vert_{L^2(G_{T+\varepsilon})} +\Vert \partial_t\phi(x,0)\Vert_{L^2(\omega)} \right)^{1-\kappa}
\end{equation} for all $\phi\in H^2(G_{T+\varepsilon})$ such that $\phi=0$ in $G\times \{0\}.$ }}
}\\

\subsubsection{ Construction of a suitable function $\phi$:}
Let us apply \eqref{Global:Carleman:EstimateProofSpectralIneq} with $\phi$ defined as follows. Consider $\psi\in C^\infty( G\times [0,T+\varepsilon])$ satisfying that
\begin{eqnarray}\psi(t):=
\begin{cases}C ,& \text{ }t\in [0,T],\,x\in G,
\\
0,& \text{ } t\in [T+\frac{3\varepsilon}{4},T+\varepsilon],\,x\in G,
\end{cases}
\end{eqnarray} where $0<C\leq \varepsilon. $ Assume that there exists $M_0>0,$ independent of $\varepsilon$ such that
\begin{equation}\label{BoundedNorm;phi:i}
    \Vert \psi^{(i)}\Vert_{L^\infty}\leq M_0,
\end{equation}for all $i\in \{1,2,3,4\}.$ We construct a function with this characteristics  in Lemma \ref{Lemma:fucntion:psi}. Then, 
by considering the function
\begin{equation}
    F(x,t):=\sum_{\lambda_j\leq \lambda}\frac{\sinh(\lambda_jt)}{\lambda_j} a_je_j(x),\,\,(x,t)\in G_T:=G\times [0,T],
\end{equation} and its extension to the  set $I_{T+\varepsilon}=[0,T+\varepsilon]$ by the constant function  equal to $F(x,T)$, that is
\begin{eqnarray}F(x,t)=
\begin{cases}\sum_{\lambda_j\leq \lambda}\frac{\sinh(\lambda_jt)}{\lambda_j} a_je_j(x),\,\,(x,t) ,& \text{ }t\in [0,T],\,x\in G,
\\
F(x,T),& \text{ } t\in [T,T+\varepsilon],\,x\in G,
\end{cases}
\end{eqnarray}we consider 
\begin{equation}\label{Auxialiar:phi:}
    \phi(x,t):=F(x,t)\psi(t),\,\,(x,t)\in G_{T+\varepsilon}.
\end{equation}Note that $\phi(x,0)=F(x,0)=0=\phi(x,T+\varepsilon)=0.$ Note also that $\phi$ is an extension  of $F$ from $G_T$ to $G_{T+\varepsilon}.$ 

Now, let us consider the odd extension of $\phi$ to the whole interval $[-(T+\varepsilon),T+\varepsilon],$ that is,
\begin{equation}
    \phi(x,t)=-\phi(x,-t)=-\psi(-t)F(x,-t),\, -(T+\varepsilon)\,\leq t\leq 0.
\end{equation}

\subsubsection{The norm $\Vert \partial_t\phi(x,0)\Vert_{L^2(\omega)}$ } Note that $\psi$ has been defined on $[0,T+\varepsilon]$ and it has been extended to $[-(T+\varepsilon),0]$ using its odd extension, that is, the one defined via  $\psi(-t)=-\psi(t),$ for $t\in [0,T+\varepsilon].$

The Leibniz rule  gives for any $t$ in a neighborhood of $t=0,$ the identity
$$  \partial_t\phi(x,t)=\psi'(t)F(x,t)+\psi(t)\partial_tF(x,t). $$ By evaluation both sides of this identity at $t=0$ we have
\begin{equation}
    \partial_t\phi(x,0)=\psi'(0)F(x,0)+\psi(0)\partial_tF(x,0)=\psi(0)\partial_tF(x,0),
\end{equation} from which we have the identity
\begin{equation}
    \Vert \partial_t\phi(x,0)\Vert_{L^2(\omega)}=|\psi(0)|\times \Vert\partial_t F(x,0) \Vert_{L^2(\omega)}=|\psi(0)|\times\Vert\varkappa \Vert_{L^2(\omega)}.
\end{equation}

\subsubsection{Embedding of $G_T$ in a closed manifold $G\times \mathbb{T}({T,\varepsilon}),$  $\mathbb{T}({T,\varepsilon})\cong \mathbb{S}^1$:} Now, we will proceed with a topological construction. 

It is clear that in the variable $t\in [-(T+\varepsilon),T+\varepsilon]$ the function $\phi$ can be extended in the periodic way to the whole line $\mathbb{R},$ or in other words, we can identify $\phi$ with a distribution on the dilated torus
\begin{equation}
    \mathbb{T}({T,\varepsilon})=\mathbb{R}/(2(T+\varepsilon)\mathbb{Z})=[-(T+\varepsilon),T+\varepsilon],
\end{equation}where in the resulting manifold $[-(T+\varepsilon),T+\varepsilon]$ we identify the endpoints  $-(T+\varepsilon)\sim T+\varepsilon.$ This construction allows the manifold $\mathbb{T}({T,\varepsilon})$ to be diffeomorphic to the circle $\mathbb{S}^1,$ and in consequence the function $\phi\in \mathscr{D}'(G\times \mathbb{T}({T,\varepsilon}))$ is  smooth on the product space  $G\times \mathbb{T}({T,\varepsilon})$ which is a compact manifold of $C^\infty$-class without boundary. In particular, we have the inclusion:
$\forall T,\varepsilon>0,\, G_T\subset G\times \mathbb{T}({T,\varepsilon}). $
 
\subsubsection{Proof of the interpolation inequality} Now, we are ready for the proof of the interpolation inequality \eqref{Interpolation:Inequality}
$$ \Vert F \Vert_{H^1(G\times (\alpha,T-\alpha))}\leq C_{s_0,s_{00}}\Vert F\Vert_{H^1(G_{T})}^\kappa \Vert \varkappa \Vert_{L^2(\omega)}^{1-\kappa}.$$
In the identity \eqref{symmetry:l2:equation:Lapla} below, we will prove that with $\phi$ defined in \eqref{Auxialiar:phi:}, we have that
\begin{align*}
    \Vert (-\partial_t^2+\mathcal{L}_G)\phi(x,t) \Vert_{L^2(G_{T+\varepsilon})}= 1/\sqrt{2}  \Vert (-\partial_t^2+\mathcal{L}_G)\phi(x,t) \Vert_{L^2(G\times \mathbb{T}({T,\varepsilon}))}.
\end{align*}
The positivity condition
$$    A\geq cI,\,\,\,c>0,$$ 
and the spectral mapping theorem applied to $A$ gives the lower bound
$$    A^{\frac{2}{  m  }}\geq c^{\frac{2}{  m  }}I,\,\,\,c>0,$$ 
which
gives the invertibility of the pseudo-differential operator $$ \mathcal{A}(t,x,D,\partial_t)=-\partial_t^2+A^{\frac{2}{  m  }}:H^2(G\times \mathbb{T}({T,\varepsilon}))\rightarrow L^2(G\times \mathbb{T}({T,\varepsilon})).$$ Then the operator
$$ \mathcal{A}(t,x,D,\partial_t)^{-1}:L^2(G\times \mathbb{T}({T,\varepsilon}))\rightarrow H^2(G\times \mathbb{T}({T,\varepsilon}))$$  is bounded, see Lemma \ref{The:constant:B}.
Moreover,
\begin{align*}
      \Vert (-\partial_t^2+\mathcal{L}_G)\phi(x,t) \Vert_{L^2(G\times \mathbb{T}({T,\varepsilon}))}\leq C \Vert(-\partial_t^2+A^{\frac{2}{  m  }})\phi(x,t)\Vert_{L^2(G\times \mathbb{T}({T,\varepsilon}))},
\end{align*}where the constant $C$ is independent of $\varepsilon>0.$ Now, let us use the identity 
$$ 
     \Vert (-\partial_t^2+\mathcal{L}_G)\phi(x,t) \Vert_{L^2(G\times \mathbb{T}({T,\varepsilon}))}$$
     $$= \Vert (-\partial_t^2+\mathcal{L}_G)(-\partial_t^2+A^{\frac{2}{m}})^{-1} (-\partial_t^2+A^{\frac{2}{m}})\phi(x,t) \Vert_{L^2(G\times \mathbb{T}({T,\varepsilon}))}.
$$ From Lemma \ref{Finite:Constant:CV}, the operator $ (-\partial_t^2+\mathcal{L}_G) (-\partial_t^2+A^{\frac{2}{m}})^{-1} $ belongs to the class $\Psi^0_{1,\delta}$ and the matrix-valued  Calder\'on-Vaillancourt theorem gives its boundedness on $L^2,$ with its operator norm  bounded by a constant $C>0,$ independent of $\varepsilon>0.$
Consequently, 
$$ \Vert (-\partial_t^2+\mathcal{L}_G)(-\partial_t^2+A^{\frac{2}{m}})^{-1} (-\partial_t^2+A^{\frac{2}{m}})\phi(x,t) \Vert_{L^2(G\times \mathbb{T}({T,\varepsilon}))}$$ 
$$\leq C \Vert  (-\partial_t^2+A^{\frac{2}{m}})\phi(x,t) \Vert_{L^2(G\times \mathbb{T}({T,\varepsilon}))}.
$$
In what follows we estimate the norm: 
\begin{equation*}
    Z_1:=   \Vert(-\partial_t^2+A^{\frac{2}{  m  }})\phi(x,t)\Vert_{L^2(G\times \mathbb{T}({T,\varepsilon}))},
\end{equation*}
and let us keep in mind that the partial analysis above gives us the inequality
\begin{equation}\label{Carleman:partial:phi}
  \Vert \phi \Vert_{H^1(G\times (\alpha,T-\alpha))}\lesssim \Vert \phi\Vert_{H^1(G_{T+\varepsilon})}^\kappa(Z_1+|\psi(0)|\Vert \varkappa\Vert_{L^2(\omega)})^{1-\kappa} ,  
\end{equation}
in view of the identity $$\psi(0)\varkappa=\partial_t\phi(x,0)=\psi(0)\partial_tF(x,0)=\psi(0)\sum_{\lambda_j\leq \lambda} a_je_j(x).$$ Indeed, we recall that
 for every $t\in [0,T],$ we have that
$$  \partial_t F(x,t)=\sum_{\lambda_j\leq \lambda}{\sinh(\lambda_jt)} a_je_j(x),$$
$$  \partial_t^2 F(x,t)=\sum_{\lambda_j\leq \lambda}{\sinh(\lambda_jt)}\lambda_j a_je_j(x).$$

\subsubsection{Estimate of $Z_1$} By the spectral properties of $A$ we have 
$$ A^{\frac{2}{  m  }}F(x,t)= \sum_{\lambda_j\leq \lambda}\frac{\sinh(\lambda_jt)}{\lambda_j} a_j A^{\frac{2}{  m  }}(e_j)(x)=\sum_{\lambda_j\leq \lambda}\frac{\sinh(\lambda_jt)}{\lambda_j} a_j \lambda_j^2e_j(x) $$
$$ =\partial_t^2 F(x,t),  $$ for all $t\in [0,T].$
Since $\phi(x,t)=\psi(t)F(x,t)$ on $[0, T],$ and $\psi$ is constant on $[0,T]$ we have that 
\begin{equation}\label{Cancellation}
    \forall x\in G,\,\forall t\in (0,T),\, (-\partial_t^2+A^{\frac{2}{  m  }})\phi(x,t)=0.
\end{equation}
First, note that the following symmetry property is valid due to the identity $\phi(x,t)=-\phi(x,-t),$

\begin{equation}\label{symmetry:l2:equation}
   \Vert (-\partial_t^2+A^{\frac{2}{  m  }})\phi(x,t) \Vert_{L^2(G\times \mathbb{T}({T,\varepsilon}))}^2= 2\Vert (-\partial_t^2+A^{\frac{2}{  m  }})\phi(x,t) \Vert^2_{L^2(G_{T+\varepsilon})}.
\end{equation}
Moreover, once proved \eqref{symmetry:l2:equation}, we have in the particular case (where $A=+\mathcal{L}_G$) of the Laplacian, the following inequality.
\begin{equation}\label{symmetry:l2:equation:Lapla}
    \Vert (-\partial_t^2+\mathcal{L}_G)\phi(x,t) \Vert_{L^2(G\times \mathbb{T}({T,\varepsilon}))}^2= 2\Vert (-\partial_t^2+\mathcal{L}_G)\phi(x,t) \Vert^2_{L^2(G_{T+\varepsilon})}.
\end{equation}
Indeed, for the proof of the identity of norms in  \eqref{symmetry:l2:equation} observe that
\begin{align*}
    & \Vert (-\partial_t^2+A^{\frac{2}{  m  }})\phi(x,t) \Vert_{L^2(G\times \mathbb{T}({T,\varepsilon}))}^2\\
     &=\int\limits_{G}\int\limits_{-T-\varepsilon}^0\vert (-\partial_t^2+A^{\frac{2}{  m  }})\phi(x,t) \vert^2 dt\,dx+\int\limits_{G}\int\limits_{0}^{T+\varepsilon}\vert (-\partial_t^2+A^{\frac{2}{  m  }})\phi(x,t) \vert^2 dt\,dx\\
     &=\int\limits_{G}\int\limits_{-T-\varepsilon}^0\vert (-\partial_t^2+A^{\frac{2}{  m  }})(-\phi(x,-t) \vert^2 dt\,dx+\int\limits_{G}\int\limits_{0}^{T+\varepsilon}\vert (-\partial_t^2+A^{\frac{2}{  m  }})\phi(x,t) \vert^2 dt\,dx\\
     &=\int\limits_{G}\int\limits_{-T-\varepsilon}^0\vert \phi_{tt}(x,-t)-A^{\frac{2}{  m  }}\phi(x,-t) \vert^2 dt\,dx+\int\limits_{G}\int\limits_{0}^{T+\varepsilon}\vert (-\partial_t^2+A^{\frac{2}{  m  }})\phi(x,t) \vert^2 dt\,dx\\
     &=\int\limits_{G}\int\limits_{0}^{T+\varepsilon}\vert -\phi_{tt}(x,t)+A^{\frac{2}{  m  }}\phi(x,t) \vert^2 dt\,dx+\int\limits_{G}\int\limits_{0}^{T+\varepsilon}\vert (-\partial_t^2+A^{\frac{2}{  m  }})\phi(x,t) \vert^2 dt\,dx\\
&=2\int\limits_{G}\int\limits_{0}^{T+\varepsilon}\vert (-\partial_t^2+A^{\frac{2}{  m  }})\phi(x,t) \vert^2 dt\,dx.
\end{align*}

Taking into account \eqref{Cancellation}, the symmetry property  $\phi(x,t)=-\phi(x,-t),$   and the positivity of the operator $(-\partial_t^2+A^{\frac{2}{  m  }})$ on $L^2(G\times \mathbb{T}({T,\varepsilon}))$ (that is, making use of the self-adjointness of $(-\partial_t^2+A^{\frac{2}{  m  }})$)  imply that
\begin{align*}
   \Vert (-\partial_t^2+A^{\frac{2}{  m  }}) &\phi(x,t) \Vert_{L^2(G_{T+\varepsilon})}^2=\frac{1}{2}\Vert (-\partial_t^2+A^{\frac{2}{  m  }})\phi(x,t) \Vert_{L^2(G\times \mathbb{T}({T,\varepsilon}))}^2\\
   &=\frac{1}{2}|((-\partial_t^2+A^{\frac{2}{  m  }})\phi),(-\partial_t^2+A^{\frac{2}{  m  }})\phi)_{L^2(G\times \mathbb{T}({T,\varepsilon}))}|\\
   &=\frac{1}{2}|((-\partial_t^2+A^{\frac{2}{  m  }})^2\phi),\phi)_{L^2(G\times \mathbb{T}({T,\varepsilon}))}|\\
   &\leq \int\limits_{G}\int\limits_{ [0,T+\varepsilon)}|(-\partial_t^2+A^{\frac{2}{  m  }})^2\phi(x,t)||\overline{\phi(x,t)}|dxdt\\
   &=\int\limits_{G}\int\limits_{ [T,T+\varepsilon)}|(-\partial_t^2+A^{\frac{2}{  m  }})^2\phi(x,t)||{\phi(x,t)}|dxdt.
\end{align*}Therefore, we have the estimate
\begin{align*}
   \Vert (-\partial_t^2+A^{\frac{2}{  m  }})\phi(x,t) \Vert_{L^2(G_{T+\varepsilon})}^2
   &\leq \int\limits_{G}\int\limits_{ [T,T+\varepsilon)}|(-\partial_t^2+A^{\frac{2}{  m  }})^2\phi(x,t)||\phi(x,t)|dxdt\\
   &\leq \int\limits_{G}\int\limits_{ [T,T+\varepsilon)}|\phi(x,t)|dxdt\times \|(-\partial_t^2+A^{\frac{2}{  m  }})^2\phi\|_{L^\infty}\\
   &=I\times II,
\end{align*}where 
\begin{equation*}
    I=\int\limits_{G}\int\limits_{ [T,T+\varepsilon)}|\phi(x,t)|dx\,dt,\,II= \|(-\partial_t^2+A^{\frac{2}{  m  }})^2\phi\|_{L^\infty(G\times [T,T+\varepsilon))}.
\end{equation*} Now, we will estimate each one of these norms.\\

\subsubsection{Estimate for $I$} Note that
\begin{align*}
    I\leq \textnormal{Vol}(G)\times \varepsilon\times \Vert \phi\Vert_{L^\infty(G\times [T,T+\varepsilon])}=\textnormal{Vol}(G)\times \varepsilon \Vert \psi(t)\Vert_{L^\infty[T,T+\varepsilon]} \Vert F(x,t)\Vert_{L^\infty}.
\end{align*}Now, and  for any $t$ fixed, and for all  $s_0\in \mathbb{N}$ observe that
\begin{equation}
    |F(x,t)|\leq \sup_{0\leq s\leq s_0} \|(1+A)^{\frac{s}{  m  }}F(\cdot,t)\|_{L^\infty(G)}.
\end{equation}In view of  the Sobolev embedding theorem, any $s_{00}>n/2$ satisfies that
\begin{align*}
  \sup_{0\leq s\leq s_0} \|(1+A)^{\frac{s}{  m  }}F(\cdot,t)\|_{L^\infty(G)}&\leq \sup_{0\leq s\leq s_0} \|(1+A)^{\frac{s+s_{00}}{  m  }}F(\cdot,t)\|_{L^2(G)}\\
  &\leq \sup_{0\leq s\leq s_0+s_{00}} \|(1+A)^{\frac{s}{  m  }}F(\cdot,t)\|_{L^2(G)}.
\end{align*}The spectral properties  of the operator $(1+A)^{\frac{s}{  m  }}$ give
the estimates
\begin{align*}
   \|(1+A)^{\frac{s}{  m  }}F(\cdot,t)\|_{L^2(G)}^2 &=\left\Vert  \sum_{\lambda_j\leq \lambda}\frac{\sinh(\lambda_jt)}{\lambda_j}(1+\lambda_j^  m  )^{\frac{s}{  m  }}a_je_j(x) \right\Vert^2_{L^2(G)} \\
   &=\sum_{\lambda_j\leq \lambda}\left|\frac{\sinh(\lambda_jt)}{\lambda_j}\right|^2(1+\lambda_j^  m  )^{\frac{2s}{  m  }}|a_j|^2\\
   &\lesssim \sum_{\lambda_j\leq \lambda}\left|\frac{\sinh(\lambda_jt)}{\lambda_j}\right|^2\lambda_j^{2s}|a_j|^2\lesssim \sum_{\lambda_j\leq \lambda}e^{\lambda_jt}\lambda_j^{2s-2}|a_j|^2\\
   &\lesssim_{s_0,s_{00}} e^{C\lambda T} \sum_{\lambda_j\leq \lambda}|a_j|^2\\
   &= e^{C\lambda T} \Vert \partial_t F(\cdot,0)\Vert_{L^2(G)}^2,
\end{align*}for some $C>1.$
So,  we deduce the inequality
\begin{equation}\label{auxialiar:s00}
    \forall s\in [0,s_0],\,\forall s_{00}>n/2,\,\,\Vert (1+A)^{\frac{s}{  m  }} F\Vert_{L^\infty}\lesssim_{s_0,s_{00}}e^{C\lambda T/2}\|\partial_t F(\cdot,0)\Vert_{L^2(G)},
\end{equation}
as well as the Sobolev inequality
\begin{equation}\label{auxialiar:s0}
  \forall s_{00}>n/2,\, \forall s\in [0,s_0+s_{00}],\,\,\,\Vert (1+A)^{\frac{s}{  m  }} F\Vert_{L^2}\lesssim_{s_0,s_{00}}e^{C\lambda T/2}\|\partial_t F(\cdot,0)\Vert_{L^2(G)}.
\end{equation}
With $s_0=0,$ we have that $\Vert  F\Vert_{L^\infty}\lesssim_{s_0,s_{00}}e^{ T\lambda /2}\|\partial_t F(\cdot,0)\Vert_{L^2(G)}.$ Putting all these estimates together we have the inequality:
\begin{equation}\label{Estimate:I}
    I\lesssim \textnormal{Vol}(G)\times \varepsilon \Vert \psi(t)\Vert_{L^\infty[T,T+\varepsilon]} e^{C\lambda T/2} \|\partial_t F(\cdot,0)\Vert_{L^2(G)}=\textnormal{Vol}(G)\times \varepsilon \Vert \psi(t)\Vert_{L^\infty[T,T+\varepsilon]} e^{C\lambda T/2}.
\end{equation} In the last line we have used the identity  $\|\partial_t F(\cdot,0)\Vert_{L^2(G)}=\Vert \varkappa\Vert_{L^2(G)}=1.$ Summarising, we have the inequality 
 $$   I\leq C' \textnormal{Vol}(G)\times \varepsilon \Vert \psi(t)\Vert_{L^\infty[T,T+\varepsilon]} e^{C\lambda T/2}, $$ for some $C'>0$ independent of $\varepsilon >0.$

\subsubsection{Estimating  $II$:} To estimate the second term, we start by observing the inequality
\begin{equation*}
    II= \|(-\partial_t^2+A^{\frac{2}{  m  }})^2\phi\|_{L^\infty(G\times [T,T+\varepsilon))}=\|(-\partial_t^2+A^{\frac{2}{  m  }})^2[\psi(t)F(x,T)]\|_{L^\infty(G\times [T,T+\varepsilon))}.
\end{equation*}Since
\begin{align*}
   (-\partial_t^2+ A^{\frac{2}{  m  }})^2 &[\psi(t)F(x,T)]=(-\partial_t^2+ A^{\frac{2}{  m  }})(-\partial_t^2+ A^{\frac{2}{  m  }})[\psi(t)F(x,T)]\\
   &=(-\partial_t^2+A^{\frac{2}{  m  }})[-\psi_{tt}(t)F(x,T)+\psi(t)A^{\frac{2}{  m  }}F(x,T)]\\
   &=\psi^{(4)}(t)F(x,T)-2\psi_{tt}(t)A^\frac{2}{  m  }F(x,T)\\
   &\,\,\,\,+\psi(t)A^{\frac{4}{  m  }}(F(x,T)),
\end{align*}for $s_0\geq 4,$ and with $s_{00}>n/2,$  the Sobolev inequality in \eqref{auxialiar:s00} implies that
\begin{align*}
  &  \|(-\partial_t^2+A^{\frac{2}{  m  }})^2[\psi(t)F(x,T)]\|_{L^\infty(G\times [T,T+\varepsilon))}\\
  &\leq \Vert \psi^{(4)}\Vert_{L^\infty[T,T+\varepsilon]}\Vert F(x,T)\Vert_{L^\infty}\\
  &+2\Vert\psi_{tt} \Vert_{L^\infty[T,T+\varepsilon]}\Vert A^{\frac{2}{  m  }}F(x,T) \Vert_{L^\infty}+\Vert\psi\Vert_{{L^\infty[T,T+\varepsilon]}}\Vert A^{\frac{4}{  m  }}F(x,T) \Vert_{L^\infty}\\
  &\leq \Vert \psi^{(4)}\Vert_{L^\infty[T,T+\varepsilon]}\Vert F(x,T)\Vert_{L^\infty}\\
  &+2\Vert\psi_{tt} \Vert_{L^\infty[T,T+\varepsilon]}\Vert(1+ A)^{\frac{s_{00}+2}{  m  }}F(x,T) \Vert_{L^2}\\
  &+\Vert\psi\Vert_{{L^\infty[T,T+\varepsilon]}}\Vert (1+ A)^{\frac{s_{00}+4}{  m  }}F(x,T) \Vert_{L^2}\\
  & \lesssim_{s_0,s_{00}}e^{{T} {\lambda}/2 }\|\partial_t F(\cdot,0)\Vert_{L^2(G)} (\Vert \psi^{(4)}\Vert_{L^\infty[T,T+\varepsilon]}+2\Vert\psi_{tt} \Vert_{L^\infty[T,T+\varepsilon]}+\Vert\psi\Vert_{{L^\infty[T,T+\varepsilon]}})\\
 &=e^{{T} {\lambda}/2 } (\Vert \psi^{(4)}\Vert_{L^\infty[T,T+\varepsilon]}+2\Vert\psi_{tt} \Vert_{L^\infty[T,T+\varepsilon]}+\Vert\psi\Vert_{{L^\infty[T,T+\varepsilon]}}).
\end{align*}
\subsubsection{Estimate for $Z_1$}
In view of the estimates for $I$ and  $II$ above,  we have that
\begin{align*}
  &  \Vert (-\partial_t^2+E(x,D)^{\frac{2}{\nu}})\phi(x,t) \Vert_{L^2(G_{T+\varepsilon})}^2\\
    &\lesssim \textnormal{Vol}(G)\times \varepsilon \Vert \psi(t)\Vert_{L^\infty[T,T+\varepsilon]} e^{T\lambda}  (\Vert \psi^{(4)}\Vert_{L^\infty[T,T+\varepsilon]}+2\Vert\psi_{tt} \Vert_{L^\infty[T,T+\varepsilon]}+\Vert\psi\Vert_{{L^\infty[T,T+\varepsilon]}}).
\end{align*}

\subsubsection{Final Analysis}
The estimates above for $Z_1$  lead to the following inequality in view of the interpolation inequality
 \eqref{Carleman:partial:phi}
\begin{align*}
 &\Vert \phi\Vert_{H^1(G\times (\alpha,T-\alpha))}\\
 &\lesssim \Vert \phi\Vert_{H^1(G_{T+\varepsilon})}^\kappa((\textnormal{Vol}(G)\times \varepsilon \Vert \psi(t)\Vert_{L^\infty[T,T+\varepsilon]} e^{T\lambda} (\Vert \psi^{(4)}\Vert_{L^\infty[T,T+\varepsilon]}+2\Vert\psi_{tt} \Vert_{L^\infty[T,T+\varepsilon]}+\Vert\psi\Vert_{{L^\infty[T,T+\varepsilon]}}))^{\frac{1}{2}}\\
 &+|\psi(0)|\Vert \varkappa \Vert_{L^2(\omega)})^{1-\kappa} .
 \end{align*} 
Now,  dividing both sides of this inequality by $|\phi(0)|$ and using that $\psi(0)=\psi(T)=\psi(t),$ $0\leq t\leq T,$ we get 
\begin{align*}
 & \Vert F(x,t)\Vert_{H^1(G\times (\alpha,T-\alpha))}=\left\Vert \frac{\psi(t) F(x,t)}{\psi(0)}\right\Vert_{H^1(G\times (\alpha,T-\alpha))}\\
 &\lesssim \left\Vert \frac{\psi(t)F(x,t)}{\psi(0)}\right\Vert_{H^1(G_{T+\varepsilon})}^\kappa\\
 &\times\frac{1}{|\psi(0)|^{1-\kappa}} (  (\textnormal{Vol}(G)\times \varepsilon \Vert \psi(t)\Vert_{L^\infty[T,T+\varepsilon]} e^{T\lambda} (\Vert \psi^{(4)}\Vert_{L^\infty[T,T+\varepsilon]}+2\Vert\psi_{tt} \Vert_{L^\infty[T,T+\varepsilon]}+\Vert\psi\Vert_{{L^\infty[T,T+\varepsilon]}}))^{\frac{1}{2}}\\
 &+|\psi(0)|\Vert \varkappa \Vert_{L^2(\omega)})^{1-\kappa} \\
 &=\left\Vert \frac{\psi(t)F(x,t)}{\psi(0)}\right\Vert_{H^1(G_{T+\varepsilon})}^\kappa\\
 &\times (\left(\textnormal{Vol}(G)\times \varepsilon  e^{T\lambda}\frac{\Vert \psi(t)\Vert_{L^\infty[T,T+\varepsilon]}}{|\psi(0)|} \left(\frac{\Vert \psi^{(4)}\Vert_{L^\infty[T,T+\varepsilon]}}{\| \psi(0)|}+\frac{2\Vert\psi_{tt} \Vert_{L^\infty[T,T+\varepsilon]}}{| \psi(0)|}+1\right)\right)^{\frac{1}{2}}\\
 &+\Vert \varkappa \Vert_{L^2(\omega)})^{1-\kappa}\\
 &=\left\Vert \frac{\psi(t)F(x,t)}{\psi(T)}\right\Vert_{H^1(G_{T+\varepsilon})}^\kappa\\
 &\times (\left(\textnormal{Vol}(G)\times \varepsilon  e^{T\lambda}\frac{\Vert \psi(t)\Vert_{L^\infty[T,T+\varepsilon]}}{|\psi(T)|} \left(\frac{\Vert \psi^{(4)}\Vert_{L^\infty[T,T+\varepsilon]}}{|\psi(T)|}+\frac{2\Vert\psi_{tt} \Vert_{L^\infty[T,T+\varepsilon]}}{|\psi(T)|}+1\right)\right)^{\frac{1}{2}}\\
 &+\Vert \varkappa \Vert_{L^2(\omega)})^{1-\kappa}. 
 \end{align*} 
 and taking the limit when $\varepsilon\rightarrow 0^+$ in both sides of this estimate
we  conclude the expected inequality,
\begin{equation}\label{Expected:Inequality}
    \Vert F \Vert_{H^1(G\times (\alpha,T-\alpha))}\leq C_{s_0,s_{00}}\Vert F\Vert_{H^1(G_{T})}^\kappa \Vert \varkappa \Vert_{L^2(\omega)}^{1-\kappa},
\end{equation}
where we have used  that $\psi(t)/\psi(0)=\psi(t)/\psi(T)=1,$ $0\leq t\leq T,$ the smoothness of $\psi,$ the following identities (see proposition \ref{Lemma:fucntion:psi}) $$ \lim_{\varepsilon\rightarrow0^+}\Vert \psi^{(4)}\Vert_{L^\infty[T,T+\varepsilon]}=\psi^{(4)}(T)=\lim_{\varepsilon\rightarrow0^+}\Vert \psi_{tt}\Vert_{L^\infty[T,T+\varepsilon]}=\psi_{tt}(T)=0,$$ and the following facts

    \begin{equation}
        \lim_{
    \varepsilon\rightarrow 0^+
    }\left\Vert{\psi(t)}/{\psi(T)} \right\Vert_{L^\infty[T,T+\varepsilon]}=1,
    \end{equation}and
    \item 
    \begin{equation}\label{2:33}
 \lim_{\varepsilon\rightarrow 0}\left\Vert{\psi(t)F(x,t)}/{\psi(0)}\right\Vert_{H^1(G_{T+\varepsilon})}= \Vert F(x,t)\Vert_{H^1(G_{T})}.  
    \end{equation}
For the proof of \eqref{2:33} note that for $\tilde{\psi}:=\psi(t)/\psi(0),$ and using that $F(x,t)=F(x,T)$ if $0\leq t\leq T+\varepsilon,$ we have
$$ 
  \lim_{\varepsilon\rightarrow 0}\left\Vert{\psi(t)F(x,t)}/{\psi(0)}\right\Vert^2_{H^1(G_{T+\varepsilon})}=\lim_{\varepsilon\rightarrow 0}\sum_{j=0,1}\int\limits_{0}^{T+\varepsilon}\Vert\partial_t^{(j)}(\tilde{\psi}(t)F(x,t))\Vert^2_{H^1(G)}dt $$ 
  $$ 
  =\lim_{\varepsilon\rightarrow 0}\sum_{j=0,1}\int\limits_{0}^{T}\Vert\partial_t^{(j)}(\tilde{\psi}(t)F(x,t))\Vert^2_{H^1(G)}dt+\lim_{\varepsilon\rightarrow 0}\sum_{j=0,1}\int\limits_{T}^{T+\varepsilon}\Vert\partial_t^{(j)}(\tilde{\psi}(t)F(x,t))\Vert^2_{H^1(G)}dt $$ 
  $$ 
  =\sum_{j=0,1}\int\limits_{0}^{T}\Vert\partial_t^{(j)}(F(x,t))\Vert^2_{H^1(G)}dt+\lim_{\varepsilon\rightarrow 0}\sum_{j=0,1}\int\limits_{T}^{T+\varepsilon}\Vert\tilde{\psi}^{(j)}(t)F(x,T)\Vert^2_{H^1(G)}dt $$   $$= \Vert F(x,t)\Vert_{H^1(G_{T})}^2, $$
where we have used that when $t\rightarrow T, $ $\tilde{\psi}^{(j)}(t)\rightarrow 0.$
Now, from \eqref{Expected:Inequality} we can follow the standard Lebeau-Robbiano argument that has been described at the beginning of the section to conclude the proof of the spectral inequality.  Having proved \eqref{ObservabilityInequality}, the proof  of Proposition \ref{Lemma:LR;Ineq} is complete.
\end{proof}

\begin{proof}[Proof of Theorem \ref{Main:theorem}] Let $\mu,c>0$ be two positive parameters and assume that $0<c<\mu^  m  $. Define
$$\tilde{E}(x,D)=A+c.$$ Note that $\tilde{E}(x,D)\geq cI.$ Since $a(x,[\xi])\geq 0,$ the global symbol 
$$ \tilde{E}(x,[\xi])=a(x,[\xi])+cI_{d_\xi}  ,$$ satisfies the positivity condition $$\forall[\xi]\in \widehat{G},\,\tilde{E}(x,[\xi])\geq c I_{d_\xi}.$$ On the other hand, 
if  $\{\mu_j:=\lambda_j^  m  ,e_j\}$ are the corresponding spectral data $$ Ae_j=\lambda_j^  m   e_j,\,\,\lambda_j\geq 0,$$ of $A,$ then  $\{\mu_j+c:=\lambda_j^  m  +c,e_j\}$ are the corresponding spectral data $$\tilde{ E}(x,D)e_j=(\lambda_j^  m  +c) e_j,\,\,\lambda_j\geq 0$$  of the operator $\tilde{E}(x,D).$  Let $\lambda:=(\mu^  m  +c)^{\frac{1}{  m  }}.$ From Proposition \ref{Lemma:LR;Ineq} we deduce the spectral inequality
\begin{equation}\label{ObservabilityInequality:Proof:II}
    \left(\sum_{ (\lambda_j^  m  +c)^{\frac{1}{  m  } }\leq \lambda}a_j^2\right)^\frac{1}{2}\leq C_1e^{C_2{\lambda}}\left\Vert \sum_{{ (\lambda_j^  m  +c)^{\frac{1}{  m  } }\leq \lambda}}a_je_j(x)  \right\Vert_{L^2(\omega)}.
\end{equation}Note that $(\lambda_j^  m  +c)^{\frac{1}{  m  } }\leq \lambda$ becomes equivalent to the inequality $\lambda_j\leq \mu$ and since $0<c<\mu^  m  ,$ then $\lambda<2\mu.$ Thus, we have proved the spectral inequality 
\begin{equation}
    \left(\sum_{\lambda_j\leq \mu}a_j^2\right)^\frac{1}{2}\leq C_1e^{ 2C_2{\mu}}\left\Vert \sum_{\lambda_j\leq \mu}a_je_j(x)  \right\Vert_{L^2(\omega)}.
\end{equation}In consequence the proof of \eqref{Spectral:Inequality:Intro} is complete. For the proof of \eqref{Donnelly-Fefferman} we can use \eqref{Spectral:Inequality:Intro} and the Sobolev embedding theorem.   Indeed, let $R>0$ and let us consider $s\in \mathbb{R}$ such that $s>n/2.$  For the proof of \eqref{Donnelly-Fefferman} we can use \eqref{Spectral:Inequality:Intro} and the Sobolev embedding theorem.   Indeed, let $R>0$ and let us consider $s\in \mathbb{R}$ such that $s>n/2.$ With $\omega=B(x,R)$ a ball  of radius $R>0$ we have that
\begin{equation}\label{Auxiliar:proof:1}
    \Vert \varkappa \Vert_{L^\infty(B(x,2R))}\leq  \Vert \varkappa \Vert_{L^\infty(G)}.
\end{equation} Now, the Sobolev embedding theorem and the inequality in  \eqref{Spectral:Inequality:Intro} imply that
\begin{align*}
    \Vert \varkappa\Vert_{L^\infty(G)}\lesssim \Vert (1+A)^{\frac{s}{m}}\varkappa \Vert_{L^2(G)}\lesssim (1+\lambda)^{s}\Vert \varkappa\Vert_{L^2(G)}
\end{align*}
$$   \lesssim(1+\lambda)^{s}C_{1,R}e^{C_{2,R}\lambda}\Vert \varkappa\Vert_{L^2(B(x,R))}.$$
By using \eqref{Auxiliar:proof:1} and  \eqref{Spectral:Inequality:Intro}  we conclude this analysis with the inequality
\begin{align*}
    \Vert \varkappa\Vert_{L^\infty(B(x,2R))}\lesssim  \Vert \varkappa\Vert_{L^\infty(G)}\leq e^{C_{2,R}'+C_{1,R}' \lambda}\Vert \varkappa\Vert_{L^2(B(x,2R))}\leq e^{C_{2,R}'+C_{1,R}' \lambda}\Vert \varkappa\Vert_{L^\infty(B(x,2R))},
\end{align*} for some $C_{1,R}'>C_{1,R}$ and  $C_{2,R}'>C_{2,R}.$  The proof of  Theorem \ref{Main:theorem} is complete.
\end{proof}

\subsection{Applications to control theory: Null-controllability for diffusion models}\label{Applications:contro:theory} Now, we give a consequence of Theorem \ref{Main:theorem} which we present in the following way.
\begin{theorem}\label{Main:theorem:statement} Let $A$ be a positive and elliptic pseudo-differential operator of order $  m  >0$ in the H\"ormander class $\Psi^  m  _{\rho,\delta}(G\times \widehat{G})$ and let $u_0\in L^2(G)$ be an initial datum.

Then, for any $\alpha>1/  m  ,$  the fractional diffusion model
\begin{equation}\label{Main:statement}
\begin{cases}u_t(x,t)+ A^\alpha u(x,t)=g(x,t)\cdot 1_\omega (x) ,& (x,t)\in G\times (0,T),
\\u(0,x)=u_0,\end{cases}
\end{equation} is null-controllable at any time $T>0,$ that is, there exists an input function $g=g(x,t)\in L^2(G)$ such that for any $x\in G,$ $u(x,T)=0.$ 
\end{theorem}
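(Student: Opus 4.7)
The plan is to deduce this null-controllability result as a direct application of Miller's abstract Theorem \ref{Miller:Theorem} in the Hilbert space setting $H = L^2(G)$, taking the unbounded positive self-adjoint operator to be $A^\alpha$ (which is defined via the Dunford--Riesz functional calculus of Corollary \ref{Complex:Powers:Th} and diagonalised by the eigenbasis $\{e_j\}$ with eigenvalues $\lambda_j^{m\alpha}$). The observation operator is the restriction $S: L^2(G) \to U := L^2(\omega)$, $Sv := v\cdot 1_\omega$, and its adjoint $B = S^*$ is extension by zero from $L^2(\omega)$ into $L^2(G)$. This matches precisely the abstract framework of Subsection \ref{Control:basics}.

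Next I would verify Assumption \ref{Assumption}. Since $A^\alpha \geq 0$ is self-adjoint, the heat semigroup $\{e^{-tA^\alpha}\}_{t\geq 0}$ is a $C_0$-semigroup of contractions on $L^2(G)$. As $S$ is a restriction with $\|Sv\|_{L^2(\omega)} \leq \|v\|_{L^2(G)}$, one immediately gets
\[
\int_0^T \|S e^{-tA^\alpha}v_0\|_{L^2(\omega)}^2\, dt \leq T\|v_0\|_{L^2(G)}^2,
\]
which is \eqref{Hip:1}; the estimate \eqref{Hip:2} follows by the dual semigroup argument using $B = S^*$. These facts are routine and the well-posedness of \eqref{Main:statement} is thereby reduced to standard semigroup theory on $L^2(G)$.

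The substantive step is the spectral inequality required by Miller for the fractional power $(A^\alpha)^\gamma = A^{\alpha\gamma}$. Since the eigenvalues of $A^{\alpha\gamma}$ are $\lambda_j^{m\alpha\gamma}$ on the common eigenbasis $\{e_j\}$, we have
\[
E^{A^{\alpha\gamma}}_\lambda(L^2(G)) = \mathrm{span}\{e_j : \lambda_j \leq \lambda^{1/(m\alpha\gamma)}\}.
\]
Applying Theorem \ref{Main:theorem} with spectral parameter $\mu := \lambda^{1/(m\alpha\gamma)}$ yields, for every $v$ in this subspace,
\[
\|v\|_{L^2(G)} \leq C_1 e^{C_2 \lambda^{1/(m\alpha\gamma)}} \|Sv\|_{L^2(\omega)}.
\]
The key algebraic point, which I expect to be the main (and essentially only) obstacle, is selecting $\gamma \in (0,1)$ so that the exponent reduces to the required linear form $d_2\lambda$: this forces $1/(m\alpha\gamma) \leq 1$, i.e.\ $\gamma \geq 1/(m\alpha)$. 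The hypothesis $\alpha > 1/m$ ensures $\gamma_0 := 1/(m\alpha) \in (0,1)$, and the choice $\gamma = \gamma_0$ gives exactly the Miller-type spectral inequality
\[
\|v\|_{L^2(G)} \leq C_1 e^{C_2 \lambda}\|Sv\|_{L^2(\omega)},
\]
which is \eqref{Espectral:Inequality:Hilbert}.

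Finally, I would invoke Theorem \ref{Miller:Theorem} with this $\gamma = 1/(m\alpha)$ to conclude the null-controllability of \eqref{Main:statement} at any time $T > 0$, together with the cost estimate $C_T \leq C_1 e^{C_2 T^{-\beta}}$ valid for any $\beta > \gamma_0/(1-\gamma_0) = 1/(m\alpha - 1)$. The sharpness remark concerning $\gamma > 1/m$ in the torus case $G = \mathbb{T}$ is then consistent with our threshold $\alpha > 1/m$, since Miller's result \cite{Miller2006} shows this range cannot be relaxed even in that simplest geometry.
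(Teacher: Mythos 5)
Your proof is correct and takes essentially the same route as the paper: both apply Miller's Theorem \ref{Miller:Theorem} with $\mathcal{A}=A^\alpha$, $S$ the restriction to $\omega$, and the choice $\gamma=1/(m\alpha)$ (so $\mathcal{A}^\gamma = A^{1/m}$, whose spectral projections match exactly the spans $\mathrm{span}\{e_j:\lambda_j\le\lambda\}$ appearing in Theorem \ref{Main:theorem}), with $\alpha>1/m$ guaranteeing $\gamma\in(0,1)$. The only difference is cosmetic: you spell out the verification of Assumption \ref{Assumption} and perform the change of spectral parameter $\mu=\lambda^{1/(m\alpha\gamma)}$ explicitly, whereas the paper notes directly that $A^{\alpha\gamma}=A^{1/m}$ already satisfies the required inequality without reparametrization.
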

\begin{proof}
The spectral inequality in \eqref{ObservabilityInequality} allows us to make use of Theorem \ref{Miller:Theorem} with $\mathcal{A}=A^\alpha$ and with $B=S=M_{1_\omega}$ being the multiplication operator by the characteristic function $1_{\omega}$. Note that $M_{1_\omega}$ is bounded on $H=L^2(G).$ Observe that $\mathcal{A}^\gamma= A^{\alpha\gamma}=A^\frac{1}{  m  }$ satisfies \eqref{ObservabilityInequality} (that is, the inequality \eqref{Espectral:Inequality:Hilbert} holds) for $\alpha\gamma=1/  m  .$ Because $\gamma\in (0,1)$ if an only if $\alpha>1/  m  ,$ Theorem  \ref{Miller:Theorem} guarantees that this inequality on the fractional order $\alpha$ is a sufficient condition in order that \eqref{Main:statement} will be null-controllable in time $T>0.$ The proof of Theorem \ref{Main:theorem:statement} is complete.
\end{proof}

 In the following result we analyse the controllability cost of the model \eqref{Main:statement} when the time is small.
 \begin{corollary}\label{Cost:Control} The controllability cost $C_T$ for the fractional heat equation \eqref{Main:statement} over short times $T\in (0,1)$ satisfies 
 \begin{equation}
     C_T\leq C_1e^{C_2T^{-\beta}},
 \end{equation}where $\beta>1/(\alpha  m  -1).$
 \end{corollary}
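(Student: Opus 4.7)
The plan is to treat this corollary as an immediate consequence of the quantitative form of Miller's theorem (Theorem~\ref{Miller:Theorem}) applied to the same data used in the proof of Theorem~\ref{Main:theorem:statement}. First I would recall the identifications already made: the underlying semigroup generator is $\mathcal{A}=A^{\alpha}$, the Hilbert space is $H=L^{2}(G)$, and the observation/control operator is the multiplication operator $S=B=M_{1_{\omega}}\in \mathscr{B}(L^{2}(G))$, which is bounded so that the hypotheses \eqref{Hip:1}--\eqref{Hip:2} of Assumption~\ref{Assumption} are trivially verified.

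Next, I would fix the correct value of the fractional parameter appearing in Miller's spectral condition \eqref{Espectral:Inequality:Hilbert}. Since Theorem~\ref{Main:theorem} provides the estimate $\|\varkappa\|_{L^{2}(G)}\leq C_{1}e^{C_{2}\lambda}\|\varkappa\|_{L^{2}(\omega)}$ for spectral projectors associated with eigenvalues $\lambda_{j}\leq \lambda$ of $A$, i.e. eigenvalues of $A^{1/m}$ bounded by $\lambda$, the appropriate choice is $\gamma$ with $\alpha\gamma=1/m$, that is
\begin{equation*}
  \gamma=\frac{1}{\alpha m}\in(0,1),
\end{equation*}
which is admissible precisely because the hypothesis $\alpha>1/m$ of Theorem~\ref{Main:theorem:statement} is assumed. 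With this choice $\mathcal{A}^{\gamma}=A^{1/m}$ and \eqref{Espectral:Inequality:Hilbert} follows verbatim from \eqref{Spectral:Inequality:Intro}.

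At this point Miller's theorem yields that, for every $\beta>\gamma/(1-\gamma)$, there are constants $C_{1},C_{2}>0$ such that
\begin{equation*}
  \forall T\in(0,1),\quad C_{T}\leq C_{1}e^{C_{2}T^{-\beta}}.
\end{equation*}
A direct computation gives
\begin{equation*}
  \frac{\gamma}{1-\gamma}=\frac{1/(\alpha m)}{1-1/(\alpha m)}=\frac{1}{\alpha m-1},
\end{equation*}
so the admissible range of $\beta$ is exactly $\beta>1/(\alpha m-1)$, matching the statement of the corollary.

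There is no substantive obstacle here: the entire content is the bookkeeping of the two fractional exponents and the verification that $\gamma\in(0,1)$ under the assumption $\alpha>1/m$. The only place where one must be slightly careful is to ensure that the spectral projector used in Miller's inequality, which is written in terms of $\mathcal{A}^{\gamma}=A^{1/m}$, corresponds to the projector $\mathrm{span}\{e_{j}:\lambda_{j}\leq\lambda\}$ appearing in Theorem~\ref{Main:theorem}; this is immediate from the spectral mapping theorem since $A$ is positive, elliptic and self-adjoint with eigenvalues $\lambda_{j}^{m}$ and eigenfunctions $e_{j}$.
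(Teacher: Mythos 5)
Your proposal is correct and follows essentially the same route as the paper: apply Miller's quantitative theorem with $\gamma = 1/(\alpha m)$, verify $\gamma\in(0,1)$ from $\alpha>1/m$, and compute $\gamma/(1-\gamma)=1/(\alpha m-1)$. (As a minor aside, the paper's proof writes the threshold as $\gamma/(\gamma-1)$, an evident sign typo; your $\gamma/(1-\gamma)$ matches the statement of Theorem~\ref{Miller:Theorem} and the final exponent.)
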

 \begin{proof}
 For the proof, note that $A^\gamma= A^{\alpha\gamma}=A^\frac{1}{  m  },$ satisfies \eqref{ObservabilityInequality}  for $\alpha\gamma=1/  m  .$ Then, from Theorem  \ref{Miller:Theorem} we have the estimate $  C_T\leq C_1e^{C_2T^{-\beta}}$ for any $\beta>\gamma/(\gamma-1)=1/(\alpha  m  -1).$ The proof of Corollary \ref{Cost:Control} is complete.
 \end{proof}

\section{Appendix:  Construction of the cut-off function $\psi$}\label{Section:construction:psi} 
In this appendix we construct  the regularising function $\psi$ used in the proof of Proposition \ref{Lemma:LR;Ineq}.  For any $\varepsilon\in (0,1),$ let $a:= {3\varepsilon} /4.$ 
 We summarise the analysis above and some their straightforward consequences in the following lemma.
\begin{lemma}\label{Lemma:fucntion:psi}
The function $\psi$ as defined in \eqref{function:psi} satisfies the following properties. 
\begin{itemize}
    \item[A.] $0<\psi(0)<\varepsilon.$
    \item[B.] $\psi^{(i)}(T)=0,$ for $i\in {\{1,2,3,4\}  }.$
    \item[C.] For $i\in {1,2,3,4},$ $\psi^{(i)}\in C^{\infty}(0,T+\varepsilon),$ 
\end{itemize} and there is a constant $M_0>0,$ independent of $\varepsilon \in (0,1),$ such that
\begin{equation}
    \Vert\psi^{(i)} \Vert_{L^\infty}\leq M_0,
\end{equation}for all $i=1,2,3,4.$
\end{lemma}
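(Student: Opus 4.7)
The plan is to realise $\psi$ as a rescaled copy of a universal smooth cut-off, supported on the narrow transition window $[T,T+3\varepsilon/4]$, with amplitude chosen small enough that the $\varepsilon$-dependent blow-up of higher derivatives is exactly compensated. Concretely, I would fix once and for all an auxiliary function $\eta\in C^{\infty}(\mathbb{R})$ with $0\leq \eta\leq 1$, $\eta\equiv 1$ on $(-\infty,0]$ and $\eta\equiv 0$ on $[1,\infty)$, and then set
\[
\psi(t):=c_{0}\varepsilon^{4}\,\eta\!\left(\frac{4(t-T)}{3\varepsilon}\right),\qquad t\in[0,T+\varepsilon],
\]
for a fixed small constant $c_{0}\in(0,1)$. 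By design $\psi\equiv c_{0}\varepsilon^{4}$ on $[0,T]$ and $\psi\equiv 0$ on $[T+3\varepsilon/4,T+\varepsilon]$, so this $\psi$ fits the profile prescribed in the proof of Proposition \ref{Lemma:LR;Ineq} with amplitude $C=c_{0}\varepsilon^{4}$.

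With this choice, verifying the three properties is routine. Property A follows at once from $\psi(0)=c_{0}\varepsilon^{4}<\varepsilon$ for $\varepsilon\in(0,1)$. Property B follows from the observation that $\eta$ is locally constant near $s=0$, so $\eta^{(i)}(0)=0$ for every $i\geq 1$; the chain rule then forces $\psi^{(i)}(T)=0$. For property C, the chain rule produces an estimate of the form
\[
\Vert\psi^{(i)}\Vert_{L^{\infty}}\lesssim\varepsilon^{4-i}\,\Vert\eta^{(i)}\Vert_{L^{\infty}},\qquad i=1,2,3,4,
\]
whose right-hand side is bounded uniformly in $\varepsilon\in(0,1)$ precisely because $4-i\geq 0$ when $i\leq 4$; taking the maximum over these indices furnishes the constant $M_{0}$ required in the statement.

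The only genuine obstacle, and the step that drives the whole design, is the scaling constraint. Since the transition region has length of order $\varepsilon$, each differentiation costs a factor of order $\varepsilon^{-1}$, so without any amplitude damping the fourth derivative would blow up like $\varepsilon^{-4}$ as $\varepsilon\to 0^{+}$. The natural way to compensate is to let the amplitude decay like $\varepsilon^{4}$, which is exactly what the prefactor $c_{0}\varepsilon^{4}$ enforces. The fact that the control is needed only up to the fourth derivative is not incidental: the subsequent analysis in Proposition \ref{Lemma:LR;Ineq} tests the auxiliary function $\phi(x,t)=\psi(t)F(x,t)$ against the operator $(-\partial_{t}^{2}+A^{2/m})^{2}$, which is fourth order in time and therefore singles out exactly $i=1,\dots,4$ as the indices that have to be controlled uniformly in $\varepsilon$.
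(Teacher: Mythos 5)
Your proposal does not prove the lemma about the paper's particular $\psi$ (the piecewise function built in \eqref{function:psi} from the explicit bump $\mathcal{E}(t)=e^{-1/(a^2-t^2)}(a^2-t^2)^{10}$ with a polynomial correction $1-\mathcal{B}t^2+\mathcal{C}t^4$ engineered so that $\tilde\eta^{(i)}(0)=0$ for $i=1,\dots,4$); instead you build a different cut-off $c_0\varepsilon^4\,\eta(4(t-T)/(3\varepsilon))$ from a fixed universal $\eta$, and verify the same conclusions A, B, C and the uniform $M_0$ bound. As a proof of the stated properties this is fine and in fact cleaner: the vanishing of $\psi^{(i)}(T)$ comes for free from $\eta\equiv 1$ on $(-\infty,0]$ (hence all one-sided derivatives of $\eta$ vanish at $0$, hence all derivatives vanish by smoothness), whereas the paper has to kill those derivatives by hand with the coefficients $\mathcal{B},\mathcal{C}$; and your scaling calculation $\Vert\psi^{(i)}\Vert_{L^\infty}\lesssim \varepsilon^{4-i}\Vert\eta^{(i)}\Vert_{L^\infty}$ transparently delivers the $\varepsilon$-uniform $M_0$.

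The caveat you should be aware of is that the lemma as stated is not all the paper actually uses. In the ``Final Analysis'' step of the proof of Proposition \ref{Lemma:LR;Ineq} the paper invokes, citing this lemma, the limiting identities
\[
\lim_{\varepsilon\rightarrow0^+}\Vert \psi^{(4)}\Vert_{L^\infty[T,T+\varepsilon]}=\psi^{(4)}(T)=0,
\qquad
\lim_{\varepsilon\rightarrow0^+}\Vert \psi_{tt}\Vert_{L^\infty[T,T+\varepsilon]}=\psi_{tt}(T)=0,
\]
and more crucially needs the combination $\varepsilon\cdot\bigl(\Vert\psi^{(4)}\Vert_{L^\infty[T,T+\varepsilon]}/|\psi(T)|+2\Vert\psi_{tt}\Vert_{L^\infty[T,T+\varepsilon]}/|\psi(T)|+1\bigr)$ to stay bounded as $\varepsilon\to 0^+$. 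Your construction provably fails both: from the same chain-rule calculation one gets $\Vert\psi^{(4)}\Vert_{L^\infty}=c_0(4/3)^4\Vert\eta^{(4)}\Vert_{L^\infty}$, a nonzero constant independent of $\varepsilon$, so the first limit is not $0$; and since $\psi(T)=c_0\varepsilon^4$, the ratio $\Vert\psi^{(4)}\Vert_{L^\infty}/|\psi(T)|$ scales like $\varepsilon^{-4}$, so even after multiplying by $\varepsilon$ the expression diverges like $\varepsilon^{-3}$. No adjustment of the amplitude power $\varepsilon^{p}$ fixes this, because the amplitude cancels in the ratio $\Vert\psi^{(4)}\Vert/|\psi(T)|$. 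So your $\psi$ satisfies the lemma as literally written, but it cannot be substituted for the paper's $\psi$ in the subsequent estimate; the feature you really need, and which the paper's $\mathcal{B},\mathcal{C}$ correction is trying to produce, is that $\psi^{(2)}$ and $\psi^{(4)}$ vanish at $t=T$ \emph{together with} enough smallness on the shrinking window $[T,T+\varepsilon]$ to beat the $1/\psi(T)$ normalisation, a property that is not recorded in the lemma statement and which a generic rescaled cut-off does not have.
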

\begin{proof}
We do this by the following steps. 
\begin{itemize}
    \item[Step 1.] Define the function
\begin{equation}
    \mathcal{E}(t)=\begin{cases}e^{-\frac{1}{a^2-t^2}}(a^2-t^2)^{10},& \text{ }t\in [0,a],
\\
0,& \text{ } t\in [a,\varepsilon].
\end{cases}
\end{equation}
\item[Step 2.]  By straightforward computation one can show that for any $t\in [0,a],$\\
\begin{itemize}
    \item[1.]  $\mathcal{E}_t(t)=2 t \exp(-1/(a^2-t^2)) (a^2-t^2)^8(-10 a^2+10 t^2-1).$
    \\
    \item[2.] $\mathcal{E}_{tt}(t)=-2 \exp(-1/(a^2-t^2)) (a^2-t^2)^6 (10 a^6+a^4 (1-210 t^2)+a^2 (390 t^4-38 t^2)-190 t^6+37 t^4-2 t^2).$
    \\
    \item[3.]  $\mathcal{E}^{(3)}(t)= 4 t \exp(-1/(a^2-t^2)) (a^2-t^2)^4 (270 a^8+a^6 (54-2520 t^2)+a^4 (5940 t^4-594 t^2+3)-54 a^2 (100 t^6-19 t^4+t^2)+t^2 (1710 t^6-486 t^4+51 t^2-2)) . $
    \\
    \item[4.] $ \mathcal{E}^{(4)}(t)=4 \exp(-1/(a^2-t^2)) (a^2-t^2)^2 (270 a^{12}-54 a^{10} (190 t^2-1)+3 a^8 (22470 t^4-954 t^2+1)-12 a^6 t^2 (14370 t^4-1581 t^2+25)+6 a^4 t^2 (35235 t^6-6714 t^4+371 t^2-2)-2 a^2 t^4 (62730 t^6-17415 t^4+1782 t^2-68)+t^4 (29070 t^8-10710 t^6+1635 t^4-124 t^2+4)).$
\end{itemize}
These explicit formulae, allow us to write the first fourth derivatives of $\psi$  in the form
\begin{equation}
    \mathcal{E}^{(i)}(t)=\exp(-1/(a^2-t^2))(a^2-t^2)^{10-2i }P_{i}(t,a),\,i\in \{1,2,3,4\}.
\end{equation}where the functions $P_i(t,a)\in \mathbb{C}[t,a]$ are polynomials in two variables. By  evaluating the functions $\mathcal{E}^{(i)},$ $i\in \{0,1,2,3,4\},$ at $t=0,$ we get
\vspace{0.1cm}
\begin{itemize}
    \item[5.] $\mathcal{E}(0)=a^{20} e^{-1/a^2}.$
    
    \item[6.] $\mathcal{E}'(0)=0.$
    
    \item[7.] $\mathcal{E}^{(3)}(0)=0.$
    
    \item[8.] $\mathcal{E}^{(4)}(0)=4a^{4}(270 a^{12}+54 a^{10}+3a^{8})e^{-1/a^2}.$
\end{itemize}
Now, consider the function
\begin{equation}
   \tilde{ \eta}(t):=\mathcal{E}(t)(1-\mathcal{B}t^2+\mathcal{C}t^4), \,\, 0\leq t\leq a,
\end{equation} where $\mathcal{B}$ and $\mathcal{C}$ are real parameters. Then, straightforward computation shows that when
\begin{itemize}
    \item[9.] $\mathcal{B}=(\mathcal{E}^{(2)}(0)-\mathcal{E}(0))/2;$
    \item[10.] $\mathcal{C}=(6(\mathcal{E}^{(2)}(0)-\mathcal{E}(0))\mathcal{E}^{(2)}(0)-\mathcal{E}^{(4)}(0) )/12\mathcal{E}(0),$
\end{itemize} the function $\tilde \eta$ satisfies the following properties
$$ \tilde \eta(0)=\mathcal{E}(0),\, \eta^{(i)}(0)=0,\,\,i=1,2,3,4. $$
Let $T>0.$ 
\end{itemize}  The analysis above shows that the function
\begin{eqnarray}\label{function:psi}\psi(t):=
\begin{cases}\mathcal{E}(0)\tilde{\eta}(t-T) ,& \text{ }t\in [T,T+a],
\\
\mathcal{E}(0){\tilde \eta}(0),& \text{ } t\in [0,T]
,
\\
0,& \text{ } t\in [T+a,T+\varepsilon],
\end{cases}
\end{eqnarray}satisfies the required properties of the lemma.  
\end{proof}

\bibliographystyle{amsplain}

\begin{thebibliography}{99}

\bibitem{ADS82}  M. F.  Atiyah, H. Donnelly, I. M.  Singer,  Geometry and analysis of Shimizu $L$-functions. Proc. Nat. Acad. Sci. U.S.A. 79(18), 5751, (1982). 

\bibitem{BG83} A. Borel, H. Garland. Laplacian and the discrete spectrum of an arithmetic group. Amer. J. Math. 105(2), 309--335, (1983). 

  
\bibitem{Apraiz} J.  Apraiz, L. Escauriaza,  G. Wang, C. Zhang. Observability inequalities and measurable sets. J. Eur. Math. Soc. 16(11), 2433--2475, (2014).

\bibitem{BenabdallahNaso2002} A. Benabdallah and M. G. Naso, Null controllability of a thermoelastic plate, Abstr. Appl.
Anal. 7 (2002), 585--599.

\bibitem{Biccari} U. Biccari,  V. Hern\'andez-Santamar\'ia. Controllability of a one-dimensional fractional heat equation: theoretical and numerical aspects. IMA J. Math. Control Inf. 36(4),    1199-1235, (2019).


\bibitem{BlairSogge} M. D. Blair, C. D. Sogge, On Kakeya-Nikodym averages, Lp-norms and lower bounds for nodal sets of eigenfunctions in higher dimensions. J. Eur. Math. Soc. 17(10), 2513--2543, (2015).

\bibitem{Cardona2022} D. Cardona. Spectral inequalities for elliptic pseudo-differential operators on closed manifolds, preprint. 

\bibitem{CardonaRuzhanskyC} D. Cardona, M.  Ruzhansky. { Subelliptic pseudo-differential operators and Fourier integral operators on compact Lie groups.} arXiv:2008.09651. 




\bibitem{CavalletiFarineli} F. Cavalletti,  S. Farinelli. Indeterminacy estimates and the size of nodal sets in singular spaces. Adv. Math. 389, Paper No. 107919, 38 pp. (2021).


\bibitem{Boutet:Acta}
L.~Boutet~de Monvel.
 Boundary problems for pseudo-differential operators.
 { Acta Math.}, 126(1-2), 11--51, (1971).



\bibitem{CalderonVaillancourt71}  A. P. Calderon, R. Vaillancourt. On the boundedness of pseudo-differential operators. J. Math. Soc. Jpn. 23(2), 374--378, (1971).
\bibitem{DRLp} J.  Delgado, M. Ruzhansky. Lp-bounds for pseudo-differential operators on compact Lie
groups, J. Inst. Math. Jussieu, 18(3),  531--559, (2019).


\bibitem{DonnellyFefferman83} H. Donnelly, C. Fefferman.  $L^2$ cohomology of the Bergman metric. Proc. Nat. Acad. Sci. U.S.A. 80(10-i), 3136--3137, (1983).

\bibitem{DonnellyFefferman} H. Donnelly and C. Fefferman, Nodal sets of eigenfunctions on Riemannian manifolds,
Invent. Math. 93, 161--183,  (1988).


\bibitem{DonnellyFefferman1990} H. Donnelly, C. Fefferman. Nodal sets for eigenfunctions of the Laplacian on surfaces. J. Amer. Math. Soc. 3(2), 333--353, (1990).

\bibitem{DonnellyFefferman1992} H. Donnelly, C. Fefferman. Nodal domains and growth of harmonic functions on noncompact manifolds. J. Geom. Anal. 2(1), 79--93, (1992).


\bibitem{DonnellyGarofalo1992} H. Donnelly,  N. Garofalo. Riemannian manifolds whose Laplacians have purely continuous spectrum. Math. Ann. 293(1), 143--161, (1992). 

\bibitem{DonnellyGarofalo1997} H. Donnelly,  N. Garofalo, Schr\"odinger operators on manifolds, essential self-adjointness, and absence of eigenvalues. J. Geom. Anal. 7(2), 241--257, (1997). 


\bibitem{Donaldson} S.  Donaldson. The geometry of 4-manifolds. Proceedings of the International Congress of Mathematicians, Vol. 1, 2 (Berkeley, Calif., 1986), 43--54, Amer. Math. Soc., Providence, RI, 1987. 

\bibitem{Liegroups} J. J.	 Duistermaat,  J. A. C. Kolk. Lie groups. 	Universitext,	Springer, Berlin Heidelberg. 2000.



\bibitem{EncisoPeralta}
A. Enciso, D. Peralta-Salas. Eigenfunctions with prescribed nodal sets. J. Differential Geom. 101(2), 197--211, (2015).

\bibitem{FeffermanPhong1978} C. Fefferman, D. H. Phong.  On positivity of pseudo-differential operators. Proc. Nat. Acad. Sci. U.S.A. 75(10), 4673--4674, (1978). 

\bibitem{FeffermanPhong1982} C. Fefferman, D. H. Phong.  Symplectic geometry and positivity of pseudodifferential operators. Proc. Nat. Acad. Sci. U.S.A. 79(2), 710--713, (1982).

\bibitem{FuLuZhang2020} X. Fu, Q. L\"u, X. Zhang. Carleman Estimates for Second Order Elliptic Operators and Applications, a Unified Approach. In: Carleman Estimates for Second Order Partial Differential Operators and Applications. Springer Briefs in Mathematics. Springer, Cham. (2019).

\bibitem{Georgiev} B. Georgiev. On the lower bound of the inner radius of nodal domains. J. Geom. Anal. 29(2), 1546--1554, (2019). 


\bibitem{Hormander1963} L. Hormander, (1963). Linear partial differential operators. Berlin: Springer.

\bibitem{Hormander1985III} L.  H\"ormander. { The analysis of the linear partial differential operators,} Vol. III-IV. Springer-Verlag, (1985).

\bibitem{JerisonLabeau} D. Jerison, G. Lebeau. Nodal sets of sums of eigenfunctions. Harmonic analysis and partial differential equations (Chicago, IL, 1996), Chicago Lectures in Math, 223--239, (1999).


\bibitem{KZZ2022} C. Kenig, J. Zhu, J. Zhuge. Doubling inequalities and nodal sets in periodic elliptic homogenization. Comm. Partial Differential Equations, 47(3), 549--584,  (2022).

\bibitem{Logunov2018} A.  Logunov. Nodal sets of Laplace eigenfunctions: proof of Nadirashvili's conjecture and of the lower bound in Yau's conjecture. Ann. of Math. 1.  187(1), 241--262, (2018).

\bibitem{Logunov20182} A. Logunov. Nodal sets of Laplace eigenfunctions: polynomial upper estimates of the Hausdorff measure. Ann. of Math. 2. 187(1), 221--239, (2018).


\bibitem{Logunov2021} A. Logunov, E. Malinnikova, N. Nadirashvili, F. Nazarov. The sharp upper bound for the area of the nodal sets of Dirichlet Laplace eigenfunctions. Geom. Funct. Anal. 31(5), 1219--1244, (2021). 

\bibitem{Leautaud2010} M. L\'eautaud, Spectral inequalities for non-selfadjoint elliptic operators and application to
the null-controllability of parabolic systems, J. Funct. Anal. 258, 2739--2778, (2010).

\bibitem{LabeauRobbiano1995} G. Lebeau, L. Robbiano, Controle exact de l'equation de la chaleur, Comm. Partial Diff. Equations., 20, 335--356,  (1995).

\bibitem{LebeauLebeau1998} G.  Lebeau, E.  Zuazua.  Null‐Controllability of a System of Linear Thermoelasticity. Arch. Rational Mech. Anal. 141(4), 297--329, (1998).




\bibitem{Lin1991} F. H. Lin, Nodal sets of solutions of elliptic and parabolic equations, Comm. Pure Appl.
Math. 44, 287--308,(1991)

\bibitem{JLLions} J. L. Lions. Controlabilit\'e exacte, perturbations et stabilisation de syst\`emes distribu\'es: Perturbations. Tome I, Collection RMA, Masson, 1988.

\bibitem{Lu2013} Q. Lu A lower bound on local energy of partial sum of eigenfunctions for Laplace--Beltrami operators. ESAIM Control Optim. Calc. Var. 19, 255--273, (2013).



\bibitem{MicuZuazua2006} S. Micu and E. Zuazua. On the controllability of a fractional order parabolic equation. SIAM J. Control Optim., 44(6), 1950--1972, (2006).

\bibitem{Miller2006} L. Miller. On the controllability of anomalous diffusions generated by the fractional Laplacian. Math. Control
Signals Systems 18(3), 260--271, (2006).

\bibitem{Miller2007} L. Miller, On the cost of fast controls for thermoelastic plates, Asymptot. Anal. 51, 93--100, (2007).



\bibitem{RousseauLebeau2012}  J. Le Rousseau,
G. Lebeau.  On Carleman estimates for elliptic and parabolic operators. Applications to unique continuation and control of parabolic equations. Esaim: Cocv 18,  712--747, (2012).

\bibitem{RousseauRobbiano2020}J. Le Rousseau, L. Robbiano. Spectral inequality and resolvent estimate for the bi-Laplace operator. J. Eur. Math. Soc.,  22, 1003--1094, (2020).

\bibitem{Ruz} M. Ruzhansky,  V. Turunen. {Pseudo-differential Operators and Symmetries: Background Analysis and Advanced Topics } Birkh\"auser-Verlag, Basel, 2010.

\bibitem{RuzhanskyTurunenWirth2014}M. Ruzhansky, V. Turunen, J. Wirth.  H\"ormander class of pseudo-differential operators on compact Lie groups and global hypoellipticity, J. Fourier Anal. Appl. 20  (2014), 476--499.

\bibitem{RuzhanskyWirth2014} M. Ruzhansky, J. Wirth. { Global functional calculus for on compact Lie groups,} J. Funct. Anal. 267(1),  144--172, (2014).

\bibitem{Shubin1987} M. A. Shubin. Pseudodifferential operators and spectral theory. Springer Series in Soviet Mathematics. Springer-Verlag, Berlin, 1987. Translated from the Russian by Stig I. Andersson.


\bibitem{Sunada} T. Sunada. Trace formula and heat equation asymptotics for a nonpositively curved manifold. Amer. J. Math. 104(4), 795--812, (1982).

\bibitem{Taylorbook1981} M. Taylor. {Pseudodifferential Operators,} Princeton Univ. Press, Princeton, N.J., (1981).

\bibitem{TY2022} L. Tian, X. Yang. Measure upper bounds for nodal sets of eigenfunctions of the bi-harmonic operator. J. Lond. Math. Soc. (2) 105(3), 1936--1973, (2022).
 
 
\bibitem{TothZelditch2021} Toth, John A.; Zelditch, Steve Nodal intersections and geometric control. J. Differential Geom. 117, no. 2, 345--393, (2021).








\end{thebibliography}

\end{document}